\DeclareMathSymbol{\Alpha}{\mathalpha}{operators}{"41}
\DeclareMathSymbol{\Beta}{\mathalpha}{operators}{"42}
\DeclareMathSymbol{\Epsilon}{\mathalpha}{operators}{"45}
\DeclareMathSymbol{\Zeta}{\mathalpha}{operators}{"5A}
\DeclareMathSymbol{\Eta}{\mathalpha}{operators}{"48}
\DeclareMathSymbol{\Iota}{\mathalpha}{operators}{"49}
\DeclareMathSymbol{\Kappa}{\mathalpha}{operators}{"4B}
\DeclareMathSymbol{\Mu}{\mathalpha}{operators}{"4D}
\DeclareMathSymbol{\Nu}{\mathalpha}{operators}{"4E}
\DeclareMathSymbol{\Omicron}{\mathalpha}{operators}{"4F}
\DeclareMathSymbol{\Rho}{\mathalpha}{operators}{"50}
\DeclareMathSymbol{\Tau}{\mathalpha}{operators}{"54}
\DeclareMathSymbol{\Chi}{\mathalpha}{operators}{"58}
\DeclareMathSymbol{\omicron}{\mathord}{letters}{"6F}
\newcommand\la{\langle}
\newcommand\ra{\rangle}
\newcommand\Z{{\mathbb Z}}
\newcommand\ZZ{{\widehat{\mathbb Z}}}
\newcommand\Q{{\mathbb Q}}
\newcommand\F{{\mathbb F}}
\newcommand\N{{\mathbb N}}
\newcommand\cL{{\mathscr L}}
\newcommand\Ra{\Rightarrow}
\newcommand\aut{\operatorname{Aut}}
\newcommand\sky{\operatorname{skysc}}
\newcommand\cosky{\operatorname{coskysc}}
\renewcommand\mod{\operatorname{Mod}}
\newcommand\map{\operatorname{Map}}
\newcommand\hookra{\hookrightarrow}
\newcommand\tura{\twoheadrightarrow}
\newcommand\da{\downarrow}
\renewcommand{\hom}{\operatorname{Hom}}
\newcommand\chom{{\mathscr Hom}}
\renewcommand{\Im}{\operatorname{Im}} 
\newcommand\sr{\stackrel}
\newcommand\ssm{\smallsetminus}
\newcommand\ol{\overline}
\newcommand\ul{\underline}
\newcommand\cE{\mathscr E}
\newcommand\cM{{\mathscr M}}
\newcommand\cH{{\mathscr H}}
\newcommand\cA{{\mathscr A}}
\newcommand\cB{{\mathscr B}}
\newcommand\cC{{\mathscr C}}
\newcommand\bH{\overline{H}}
\newcommand\bT{\overline{T}}
\newcommand\cF{{\mathcal F}}
\newcommand\cS{{\mathscr S}}
\newcommand\cU{{\mathscr U}}
\newcommand\cT{{\mathcal T}}
\newcommand\cK{{\mathscr K}}
\newcommand\cG{{\mathscr G}}
\newcommand\ld{\lambda}
\newcommand\Ld{\Lambda}
\newcommand\wh{\widehat}
\newcommand\td{\tilde}
\newcommand\sg{\sigma}
\newcommand\bt{\bullet}
\newcommand\ind{\operatorname{Ind}}
\newcommand\coind{\operatorname{Coind}}
\newcommand\id{\operatorname{id}}
\def\co{\colon\thinspace}
\newcommand\Rep{\operatorname{Rep}}
\newcommand\ad{\operatorname{ad}}
\newcommand\Sh{\operatorname{Shv}}
\newcommand\Et{\acute{\mathrm{E}}\mathrm{t}}
\newcommand\CoEt{\mathrm{Co}\acute{\mathrm{E}}\mathrm{t}}
\newcommand\Op{\operatorname{Op}}
\newcommand\Ob{\operatorname{Ob}}
\newcommand\rk{\operatorname{rk}}
\DeclareMathOperator*{\colim}{colim}
\def\mr{\mathrm}
\newenvironment{customtheorem}[1]
  {\innercustomthm}
  {\endinnercustomthm}
\newtheorem{theorem}{Theorem}[section]
\newtheorem{corollary}[theorem]{Corollary}
\newtheorem{proposition}[theorem]{Proposition}
\newtheorem{lemma}[theorem]{Lemma}
\theoremstyle{definition}
\newtheorem{definition}[theorem]{Definition}   
\newtheorem{remark}[theorem]{Remark}
\begin{document}

\title[Lannes' $T$-functor and mod-$p$ cohomology of profinite groups]{Lannes' $T$-functor and mod-$p$ cohomology\\ of profinite groups}

\author{Marco Boggi}
\address{UFF - Instituto de Matem\'atica e Estat\'{\i}stica -
Niter\'oi - RJ 24210-200; Brazil}
\email{marco.boggi@gmail.com}

\begin{abstract}The Lannes-Quillen theorem relates the mod-$p$ cohomology of a finite group $G$ with the mod-$p$ cohomology of centralizers
of abelian elementary $p$-subgroups of $G$, for $p>0$ a prime number. This theorem was extended to profinite groups whose mod-$p$ 
cohomology algebra is finitely generated by Henn. In a weaker form, the Lannes-Quillen theorem was then extended by Symonds to arbitrary 
profinite groups.
Building on Symonds' result, we formulate and prove a full version of this theorem for all profinite groups. For this purpose, we develop a theory 
of products for families of discrete torsion modules, parameterized by a profinite space\footnotemark, which is dual, in a very precise sense, 
to the theory of coproducts for families of profinite modules, parameterized by a profinite space, developed by Haran, Melnikov and Ribes.
In the last section, we give applications to the problem of conjugacy separability of $p$-torsion elements and finite $p$-subgroups.
\vskip 0.2cm
\noindent AMS Mathematics Subject Classification: 20J06, 20F65, 20J05, 20E18, 20E26.
\end{abstract}
\footnotetext{The day after this article was posted on the arXiv, the paper \cite{Wilkes}, which develops
a similar theory, although with a different approach, was also posted there. 
In Section~\ref{comparison}, we give a detailed account of how the classical Haran, Melnikov, Ribes 
and the new \cite[Wilkes]{Wilkes} theories are related to ours.} 
\maketitle

\section{Introduction}
Throughout the paper $p$ will be a fixed positive prime number. 
Homology (or better cohomology) decompositions are formulas which relate the mod-$p$ cohomology of a group $G$ with that of a set 
of proper subgroups. Typically, these subgroups are the normalizers or centralizers of some special class of "simple" subgroups of the group 
$G$ (e.g.\ elementary abelian $p$-subgroups). In this paper, we will consider one of these formulas which is obtained combining the theory of 
Lannes' $T$-functor with a classical Quillen homology decomposition in terms of centralizers of elementary abelian $p$-subgroups 
(cf.\ \cite{DH}, \cite{Lannes0} and \cite{Quillen}).

\subsection{Lannes' $T$-functor}
Let $\cU$ be the category of unstable modules over the mod-$p$ Steenrod algebra (cf.\ \cite[Definition~2 and~3]{DH}) and $\cK$ the category
of unstable algebras over the mod-$p$ Steenrod algebra (cf.\ \cite[Definition~4]{DH}). For a topological space $X$, let 
$H^\bullet(X):=H^\bullet(X;\F_p)$ be its mod-$p$ cohomology algebra. Then, $H^\bullet(X)\in\cK$. For an abstract group $G$, 
we let $H^\bullet(G):=H^\bullet(BG;\F_p)$, where $BG$ is the classifying space of $G$. 

For a profinite group $G$, we let $H^\bullet(G):=H^\bullet(BG;\F_p)$, where $BG$ is the profinite classifying space of $G$ 
in the sense of \cite{Quick1} and so $H^\bullet(G)$ is the continuous cohomology algebra of $G$ with constant $\F_p$-coefficients.
There still holds $H^\bullet(G)\in\cK$.

Let $V$ be an elementary abelian $p$-group. \emph{Lannes' $T$-functor} $T_V\co\cU\to\cU$ is the left adjoint of the functor 
$H^\bullet(V)\otimes\_\co\cU\to\cU$, that is to say it is characterized by the identity, for all $M,N\in\cU$:
\[\hom_\cU(T_V M,N)=\hom_\cU(M,H^\bullet(V)\otimes N).\]
Since $H^\bullet(V)$ is an algebra of finite type (i.e.\ it is finite in all degrees), the functor $H^\bullet(V)\otimes\_$ preserves small limits and 
the existence of Lannes' $T$-functor $T_V$ follows from the Special Adjoint Functor Theorem
(cf.\ \cite[Corollary to Theorem~2, Section~8, Ch.~V]{MacLane}). 

For a topological space $X$, let $\cC(BV,X)$ be the space of continuous maps. The evaluation map $\mathrm{ev}_X\co BV\times\cC(BV,X)\to X$ 
induces on cohomology a map:
\[\mathrm{ev}_X^\ast\co H^\bullet(X)\to H^\bullet(V)\otimes H^\bullet(\cC(BV,X)).\]       
By the adjunction formula, we then obtain a natural homomorphism: 
\[\ad(\mathrm{ev}_X^\ast)\co T_V H^\bullet(X)\to H^\bullet(\cC(BV,X)).\] 

It turns out that, in many cases, the map $\ad(\mathrm{ev}_X^\ast)$ is an isomorphism. Thus, for a general $M\in\cU$, we may think of
the module $T_V M$ as a sort of categorical avatar of the mod-$p$ cohomology algebra of the mapping space $\cC(BV,X)$. 

An important property of Lannes' $T$-functor\! (cf.\ \cite[Proposition~2.4.5]{Lannes}, \cite[Theorem~3.8.1]{Schwartz} and \cite[Theorem~14]{DH}) 
is that it restricts to a functor $T_V\co\cK\to\cK$ and that, for all $M,N\in\cK$, we still have the adjunction formula:
\[\hom_\cK(T_V M,N)=\hom_\cK(M,H^\bullet(V)\otimes N).\]
Other fundamental properties of Lannes' $T$-functor $T_V$ are (cf.\ \cite{Lannes}, \cite{DH} and \cite{Schwartz}):
\begin{itemize}
\item $T_V$ is exact;
\item for all $M,N\in\cU$, there is a natural isomorphism $T_V(M\otimes N)\cong T_V M\otimes T_V N$;
\item if $M\in\cU$ is finite, then $T_V M=M$. More generally, $T_V M=M$, if $M$ is the union of its finite unstable submodules.
\end{itemize}

\subsection{The reduced $T$-functor}\label{defreduced}
There is a reduced version of Lannes' $T$-functor which will turn out to be particularly useful for some of our applications. Let $\bH^\bt(\_)$ 
be the reduced cohomology functor with constant $\F_p$-coefficients. This also takes values in the category $\cU$ of unstable modules 
over the mod-$p$ Steenrod algebra and there is a natural isomorphism $H^\bt(\_)\cong\F_p\oplus\bH^\bt(\_)$. 
For an elementary abelian $p$-group $V$, the \emph{reduced $T$-functor} $\bT_V$ is then defined to be the left adjoint of the functor 
$\bH^\bullet(V)\otimes\_\co\cU\to\cU$. From the various definitions involved, it immediately follows that, for $M\in\cU$, 
there is a natural isomorphism of unstable modules:
\begin{equation}\label{reducedT}
T_V M\cong M\oplus \bT_V M.
\end{equation}

\subsection{Lannes' generalization of Quillen's theorem}
For a group $G$, we denote by $\Rep(V,G)=\hom(V,G)/G$ the set of $G$-conjugacy classes of homomorphisms $V\to G$ and, 
for a subgroup $H$ of $G$, by $C_G(H)$ the centralizer of $H$ in $G$. 

Let us denote a conjugacy class in $\Rep(V,G)$ by $[\rho]$, where $\rho\co V\to G$ is a chosen representative. 
To the homomorphism $\rho\co V\to G$, we associate the homomorphism:
\[c_\rho\co V\times C_G(\rho(V))\to G\]
defined by the assignment $(v,x)\mapsto\rho(v)x$. This induces a homomorphism of unstable algebras 
$c_\rho^\ast\co H^\bullet(G)\to H^\bullet(V)\otimes H^\bullet(C_G(\rho(V)))$ and then the adjoint homomorphism of unstable algebras:
\[\ad(c_\rho^\ast)\co T_V H^\bullet(G)\to H^\bullet(C_G(\rho(V))).\]
Taking the direct product of these maps over the set $\Rep(V,G)$, we get a map:
\begin{equation}\label{Lannesiso}
\prod_{[\rho]\in\Rep(V,G)}\ad(c_\rho^\ast)\co T_V H^\bullet(G)\to\prod_{[\rho]\in\Rep(V,G)} H^\bullet(C_G(\rho(V))).
\end{equation}
There holds (cf.\ \cite[\emph{Exemples} in Section~3.4]{Lannes} and \cite[Theorem~19]{DH}):

\begin{theorem}[Lannes]\label{Lannesfinite}For a finite group $G$ and an elementary abelian $p$-group $V$, the map~\eqref{Lannesiso}
is an isomorphism of unstable algebras over the mod-$p$ Steenrod algebra.
\end{theorem}

\begin{definition}\label{Lannesgroup}A group for which the map~\eqref{Lannesiso} is an isomorphism is called a \emph{Lannes group}.
\end{definition}

\begin{remark}\label{Lannesgeneral}The following is a list of Lannes groups: 
\begin{enumerate}
\item Groups of finite virtual $p$-cohomological dimension (cf.\ \cite{Lannes0} and \cite{DH}).
\item Groups which act on a finite dimensional $CW$-complex cocompactly and with finite stabilizers (cf.\ \cite{H1}).
\item Compact Lie groups (cf.\ \cite{Lannes0} and \cite{DH}).
\item Profinite groups with finitely generated mod-$p$ (continuous) cohomology algebra (cf.\ \cite{H2}).
\end{enumerate}
\end{remark}

\subsection{A generalization to arbitrary profinite groups}
For a profinite group $G$ with infinitely many conjugacy classes of elementary abelian $p$-subgroups, we cannot expect the
map of Theorem~\ref{Lannesfinite} (which is still well defined) to be an isomorphism. For one thing, in this case, $T_V H^\bullet(G)$
is naturally endowed with the discrete topology while the natural topology on the product $\prod_{[\rho]\in\Rep(V,G)} H^\bullet(C_G(\rho(V)))$
is prodiscrete. However, this only happens because the abstract product of modules is not the right notion to consider here. 

Symonds (cf.\ \cite[Proposition~4.1]{Symonds}) proved that, for an arbitrary profinite group $G$, the map of Theorem~\ref{Lannesfinite} has dense image.
In this paper, thanks to a topological notion of product for certain families of discrete $\F_p$-modules parameterized by a profinite space $T$, 
we are able to give a more precise result. We will call such a family $\{M_t\}_{t\in T}$ an \emph{\'etal\'e space of torsion discrete $\F_p$-modules
on the profinite space $T$} (cf.\ Section~\ref{definitionetdis}, for the precise definition) and we will denote by $\prod_{t\in T}^\mathrm{top}M_t$ 
the corresponding product (cf.\ Section~\ref{defprodfunctor}, for the precise definition). The main result of the paper then is:

\begin{customtheorem}{A}For a profinite group $G$ and an elementary abelian $p$-group $V$, let us denote by $\pi\co\hom(V,G)\to\Rep(V,G)$ 
the natural orbit map. There is then a natural isomorphism of discrete $\F_p$-vector spaces:
\[T_V H^\bullet(G)\cong\sideset{}{^{\mathrm{top}}}\prod_{[\rho]\in\Rep(V,G)}H^\bullet(G;\cC(\pi^{-1}([\rho]),\F_p)),\]
where $H^\bullet(G;\cC(\pi^{-1}([\rho]),\F_p))\cong H^\bullet(C_G(\rho(V)))$, for all $[\rho]\in\Rep(V,G)$.
\end{customtheorem}

\subsection{Applications to conjugacy separability}
Let us recall that an element $x$ of a group $G$ is \emph{conjugacy distinguished} if, whenever an element $y\in G$ is not conjugated 
to $x$, there is a finite quotient $G/N$ of $G$ such that the images of $x$ and $y$ in $G/N$ are also not conjugated. For a residually finite
group $G$, this is equivalent to the property that the conjugacy class of $x$ in $G$ is closed for the profinite topology, that is to say, 
for the topology induced by the embedding of $G$ in its profinite completion $\wh{G}$. 

Similarly, we then say that a finite subgroup $H$ of $G$ is \emph{subgroup conjugacy distinguished} if the conjugacy class $H^G$ of $H$ is closed 
in the space of finite subgroups $\cS_f(G)$ of $G$, where we endow the set $\cS_f(G)$ with the topology induced by its embedding 
in the profinite space of all closed subgroups of the profinite completion $\wh{G}$ of $G$.

Theorem~A then implies the following conjugacy separability criterion for $p$-elements and elementary abelian $p$-subgroups of a Lannes group
(cf.\ Corollary~\ref{separability2}) which improves a criterion by Minasyan and Zalesskii (cf.\ \cite[Theorem~1.5]{MZ}).
We use the notation $i\gg 0$ in the meaning of "for all $i$ sufficiently large":

\begin{customtheorem}{B}Let $G$ be a Lannes group such that the natural homomorphism $\iota\co G\to\wh{G}$ to its profinite completion 
induces a surjective map $\iota^\ast\co H^i(\wh{G};\F_p)\to H^i(G;\F_p)$ for $i\gg 0$, then $p$-elements of $G$ 
are conjugacy distinguished and elementary abelian $p$-subgroups of $G$ are subgroup conjugacy distinguished.
\end{customtheorem}

For a group $G$, let $\cS_p(G)$ be the category of finite $p$-subgroups of $G$, where the morphisms are induced by inclusion and conjugation by elements 
of $G$. Another application (which one should compare with \cite[Theorem~1.1]{Symonds} and \cite[Theorem~A]{BZ}) of Theorem~A is then:

\begin{customtheorem}{C}Let $G$ be a Lannes group such that the cohomology ring $H^\bt(G;\F_p)$ is a finitely generated $\F_p$-algebra and 
let $f\co G\to L$ be a homomorphism to a profinite group such that the induced map $f^\ast\co H^i(L;\F_p)\to H^i(G;\F_p)$ is an isomorphism 
for $i\gg 0$. Then, the induced functor $f_p\co\cS_p(G)\to\cS_p(L)$ is an equivalence of categories.
\end{customtheorem}
\medskip

\noindent
{\bf Acknowledgements.} I thank two anonymous referees for their comments on preliminary versions of this manuscript.

\section{Sheaves and \'etal\'e spaces of profinite and discrete torsion modules}\label{algdef}
Let us fix an involutive profinite $\ZZ$-algebra $\Rho$, where we let $\sg$ be the involution and denote by $r^\ast:=\sg(r)$ the adjoint of 
$r\in\Rho$. We assume moreover that $\Rho$ is endowed with an augmentation map $\Rho\tura R$ onto the image $R$ of the structure map 
$\ZZ\to\Rho$ (typically, $\Rho=\ZZ$, $\Z_p$, $\F_p$, with trivial involution, or the group algebra of a profinite group $G$ over one of the 
previous rings, with involution the linear extension of the map $g\mapsto g^{-1}$, for $g\in G$).

Let then $\Rho\text{-}\mod^\mathrm{pro}$ and $\Rho\text{-}\mod^\mathrm{dis}$ be, respectively, the categories of (topological, left) profinite, 
discrete (and then torsion) $\Rho$-modules. They are both abelian categories and intersect in the subcategory
$\Rho\text{-}\mod^\mathrm{fin}=\Rho\text{-}\mod^\mathrm{pro}\cap\Rho\text{-}\mod^\mathrm{dis}$ of finite discrete $\Rho$-modules of which
they are, respectively, the completion and the cocompletion.

In this paper, a topological space is \emph{compact} if every open cover has a finite subcover. 
A \emph{profinite space} is a cofiltered limit of finite discrete spaces, endowed with the limit topology. 
Equivalently, with the above conventions, a profinite space is a totally disconnected,  compact, Hausdorff space. 

The following well-known properties of profinite spaces will be used all through the paper (cf.\ \cite[Theorem~1.1.12]{RZ}). 
A profinite space $X$ admits a base for the topology which consists of simultaneously closed and open (briefly, \emph{clopen}) subsets. 
In particular, every open cover of $X$ can be refined by a cover consisting of finitely many disjoint clopen subsets.

\subsection{Sheaves of profinite and discrete torsion modules over a profinite space}
Let $T$ be a profinite space and let $\Op(T)$ be the category of open subsets of $T$.
A \emph{presheaf of finite discrete $\Rho$-modules on $T$} is a functor $\cF\co\Op(T)^\mathrm{op}\to\Rho\text{-}\mod^\mathrm{fin}$.
A \emph{sheaf of finite discrete $\Rho$-modules on $T$} is such a presheaf which satisfies the condition that,
for every open covering $\{U_i\}_{i\in I}$ of an open subset $U$ of $T$, the following diagram is an equalizer:
\[\cF(U)\to\prod_{i\in I}\cF(U_i)\rightrightarrows\prod_{i,j\in I}\cF(U_i\cap U_j).\]

Let $\cS^\mathrm{fin}_T(\Rho)$ be the category of \emph{locally constant} sheaves of finite discrete $\Rho$-modules on $T$, 
which we regard as a subcategory of the category $\Sh^\mathrm{top}_T(\Rho)$ of sheaves of topological $\Rho$-modules on $T$.
We then let the categories $\cS^\mathrm{pro}_T(\Rho)$ and $\cS^\mathrm{dis}_T(\Rho)$ be respectively the completion and 
the cocompletion of $\cS^\mathrm{fin}_T(\Rho)$ inside the category $\Sh^\mathrm{top}_T(\Rho)$. They consist, respectively,
of sheaves of profinite and discrete $\Rho$-modules.

The stalk $\cF_x$ of the colimit $\cF$ of a filtered diagram $\{\cF_\alpha\}_{\alpha\in\Alpha}$ of sheaves is naturally isomorphic to 
$\colim_{\alpha\in\Alpha}(\cF_\alpha)_x$. The analogue is not true in general for cofiltered limits of sheaves. 
The following lemma shows that the category of \'etal\'e spaces over $T$ is better behaved with respect to the operation 
of completion:

\begin{lemma}\label{stalkspro}For $\{\cF_\alpha\}_{\alpha\in\Alpha}$ a cofiltered diagram in $\cS^\mathrm{fin}_T(\Rho)$,
let $\{(F_\alpha,\pi_\alpha)\}_{\alpha\in\Alpha}$ be the cofiltered diagram of associated \'etal\'e spaces and $(F,\pi)$ its limit.
Then, for all $x\in T$, there is a natural isomorphism $F_x:=\pi^{-1}(x)\cong\lim_{\alpha\in\Alpha}(\cF_\alpha)_x$.
\end{lemma}

\begin{proof}For every $\alpha\in\Alpha$, there is a natural isomorphism $\cF_\alpha\cong F_\alpha$ and then, passing to the limit,
a natural isomorphism $F_x=\lim_{\alpha\in\Alpha}(F_\alpha)_x=\lim_{\alpha\in\Alpha}(\cF_\alpha)_x$.
\end{proof}

\begin{remark}\label{stalkscomp}Let $\cF:=\lim_{\alpha\in\Alpha}\cF_\alpha\in\cS^\mathrm{pro}_T(\Rho)$. 
The sheaf $\cF$ then identifies with the sheaf of continuous sections of the natural map $\pi\co F\to T$. This implies that,
for all $x\in T$, there is a natural homomorphism $\cF_x\to F_x$. However, in general, this map is neither injective nor surjective.
In the special case when $\cF$ is also locally constant, it is easy to see that the map $\cF_x\to F_x$ is indeed an isomorphism.
It is also possible to show that, in case the transition maps of the cofiltered diagram $\{\cF_\alpha\}_{\alpha\in\Alpha}$ are surjective,
the map $\cF_x\to F_x$ is surjective but yet, even in this case, this is not necessarily an isomorphism. We omit the rather nasty details, 
since these observations are not going to be used in the paper.
\end{remark}

\subsection{Pontryagin duality for sheaves of profinite and discrete torsion modules}\label{Pontduality1}
For $A\in\Rho\text{-}\mod^\mathrm{pro}$ or $\Rho\text{-}\mod^\mathrm{dis}$, let $A^\vee:=\hom_\ZZ(A,\Q/\Z)$ and define a structure 
of (topological, left) $\Rho$-module on $A^\vee$ by setting $r\cdot f(a)=f(r^\ast\cdot a)$, for $r\in\Rho$, $f\in A^\vee$ and $a\in A$. 
This is called the \emph{Pontryagin dual of $A$}. It is well known (cf.\ \cite[Section~5.1]{RZ}) 
that the Pontryagin duality functor $\hom_\ZZ(\_,\Q/\Z)$ is a duality of categories between the abelian categories
$\Rho\text{-}\mod^\mathrm{pro}$ and $\Rho\text{-}\mod^\mathrm{dis}$. 

For a $\Rho$-module $A$, let us denote by $\ul{A}_T$ (or more simply by $\ul{A}$) the constant sheaf on $T$ associated to $A$.
Then, for a sheaf $\cF\in\Sh^\mathrm{top}_T(\Rho)$, we define the \emph{Pontryagin dual of $\cF$} to be the Hom sheaf:
\[\cF^\vee:=\chom_{\ul{\ZZ}}(\cF,\ul{\Q/\Z})\in\Sh^\mathrm{top}_T(\Rho).\]

\begin{lemma}\label{locallyconstanthom}For $\cF\in\cS^\mathrm{fin}_T(\Rho)$ and $\cG\in\cS^\mathrm{dis}_T(\Rho)$ locally constant,
the sheaf $\chom_{\ul{\Rho}}(\cF,\cG)$ is also locally constant and, for all $t\in T$, there holds 
$\chom_{\ul{\Rho}}(\cF,\cG)_t=\hom_\Rho(\cF_t,\cG_t)$.
\end{lemma}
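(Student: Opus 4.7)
The plan is to reduce to the case where both $\cF$ and $\cG$ are constant on a common clopen neighborhood and then identify the Hom sheaf there explicitly. Fix $t\in T$. Since $\cF$ and $\cG$ are both locally constant and $T$ has a base of clopens, I can choose a clopen neighborhood $W\ni t$ on which $\cF|_W\cong\ul{A}_W$ and $\cG|_W\cong\ul{B}_W$, where $A:=\cF_t$ is a finite $\Rho$-module and $B:=\cG_t$ is a discrete torsion $\Rho$-module. Since both local constancy and the stalk identification are local properties, it suffices to prove that on $W$ one has a natural isomorphism $\chom_{\ul{\Rho}}(\ul{A}_W,\ul{B}_W)\cong\ul{\hom_\Rho(A,B)}_W$ of sheaves.

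For any clopen $W'\subseteq W$, the universal property of the sheafified constant presheaf gives
\[\hom_{\ul{\Rho}|_{W'}}(\ul{A}_{W'},\ul{B}_{W'})\;\cong\;\hom_\Rho\bigl(A,\ul{B}_W(W')\bigr)\;=\;\hom_\Rho\bigl(A,\cC(W',B)\bigr),\]
using that sections of $\ul{B}_W$ over $W'$ are continuous (equivalently, since $B$ is discrete, locally constant) maps $W'\to B$. The crux is now the natural swap
\[\hom_\Rho\bigl(A,\cC(W',B)\bigr)\;\cong\;\cC\bigl(W',\hom_\Rho(A,B)\bigr),\]
sending $\varphi$ to $w\mapsto\bigl(a\mapsto\varphi(a)(w)\bigr)$. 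Finiteness of $A$ enters here in two essential ways: first, since $A$ is annihilated by an open ideal of $\Rho$, the module $\hom_\Rho(A,B)$ is itself a discrete torsion $\Rho$-module; second, for $g\in\hom_\Rho(A,B)$, the preimage $\{w\in W':f(w)=g\}$ coincides with the \emph{finite} intersection $\bigcap_{a\in A}\varphi(a)^{-1}(g(a))$ of clopens in $W'$, hence is clopen, yielding the continuity needed for the map to land in $\cC(W',\hom_\Rho(A,B))$. Naturality in clopens $W'\subseteq W$ then promotes the identification to an isomorphism of sheaves on $W$.

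Local constancy of $\chom_{\ul{\Rho}}(\cF,\cG)$ follows, and taking the stalk at $t\in W$ of the constant sheaf gives $\chom_{\ul{\Rho}}(\cF,\cG)_t=\hom_\Rho(A,B)=\hom_\Rho(\cF_t,\cG_t)$, as required. The main obstacle—and the sole place where the finiteness assumption on $\cF$ is used—is precisely the swap of $\hom_\Rho$ with $\cC(W',-)$; were $A$ only profinite, $\hom_\Rho(A,B)$ would typically fail to be discrete and the continuity argument above would break down.
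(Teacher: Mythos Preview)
Your proof is correct and shares the paper's overall strategy—restrict to a (cl)open on which both sheaves are constant and identify the Hom sheaf there with a constant sheaf—but the execution of the key identification is different and more elementary. The paper appeals to a general Stacks Project lemma (finite presentation of $M$ over $\ZZ$ forces $\ul{\hom_\ZZ(M,N)}\to\chom_{\ul{\ZZ}}(\ul{M},\ul{N})$ to be an isomorphism) and then deduces the $\ul{\Rho}$-linear statement by intersecting the image of $\ul{\hom_\ZZ(M,N)}$ with $\chom_{\ul{\Rho}}\subseteq\chom_{\ul{\ZZ}}$. You instead compute sections of $\chom_{\ul{\Rho}}(\ul{A},\ul{B})$ directly on clopens via the constant-sheaf adjunction and the swap $\hom_\Rho(A,\cC(W',B))\cong\cC(W',\hom_\Rho(A,B))$, using the finite intersection $\bigcap_{a\in A}\varphi(a)^{-1}(g(a))$ to get continuity. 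Your route is more self-contained and makes explicit use of the profinite topology on $T$; the paper's black-box argument, on the other hand, is insensitive to the base space. One minor imprecision: since $\Rho$ need not be commutative, $\hom_\Rho(A,B)$ is in general only a discrete torsion $\ZZ$-module, not a $\Rho$-module (consistently, the paper later places the sheaf Hom in $\cE_T^\mathrm{dis}(\ZZ)$); this has no effect on your argument.
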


\begin{proof}Let $U$ be an open subset of $T$ such that both $\cF|_U$ and $\cG|_U$ are constant sheaves and let 
$M,N\in\Rho\text{-}\mod^\mathrm{dis}$, with $M$ finite, be such that $\ul{M}_U=\cF|_U$ and $\ul{N}_U=\cG|_U$.
We then have the identities:
\[\chom_{\ul{\Rho}}(\cF,\cG)|_U=\chom_{\ul{\Rho}_U}(\cF|_U,\cG|_U)=\chom_{\ul{\Rho}_U}(\ul{M}_U,\ul{N}_U).\]

The intersection of the image of the natural map $\ul{\hom_\ZZ(M,N)}_U\to\chom_{\ul{\ZZ}_U}(\ul{M}_U,\ul{N}_U)$ 
with $\chom_{\ul{\Rho}_U}(\ul{M}_U,\ul{N}_U)\subseteq\chom_{\ul{\ZZ}_U}(\ul{M}_U,\ul{N}_U)$ obviously
coincides with the image of the natural map $\ul{\hom_\Rho(M,N)}_U\to\chom_{\ul{\Rho}_U}(\ul{M}_U,\ul{N}_U)$. 

Since $M$ is finite, it is of finite presentation as a $\ZZ$-module and,
by \cite[\href{https://stacks.math.columbia.edu/tag/093P}{Tag 093P}, (1), Lemma~18.43.4]{stacks-project},
the map $\ul{\hom_\ZZ(M,N)}_U\to\chom_{\ul{\ZZ}_U}(\ul{M}_U,\ul{N}_U)$ is an isomorphism. By the above remark,
this implies that the map $\ul{\hom_\Rho(M,N)}_U\to\chom_{\ul{\Rho}_U}(\ul{M}_U,\ul{N}_U)$ is an isomorphism as well, 
which implies both claims of the lemma.
\end{proof}

\begin{lemma}\label{locstronglyfiltered}\leavevmode\begin{enumerate}
\item Let $\{\cF_\alpha\}_{\alpha\in\Alpha}$ be a cofiltered diagram in $\cS^\mathrm{fin}_T(\Rho)$ with 
surjective transition maps. Then the limit $\cF:=\lim_{\alpha\in\Alpha}\cF_\alpha\in\cS^\mathrm{pro}_T(\Rho)$ has the property
that, for every epimorphism of sheaves $p\co\cF\to\cK$, where $\cK\in\cS^\mathrm{fin}_T(\Rho)$, there is an $\alpha\in\Alpha$
such that $p$ factors through the natural epimorphism $\cF\to\cF_\alpha$.
\item Let $\{\cF_\alpha\}_{\alpha\in\Alpha}$ be a filtered diagram in $\cS^\mathrm{fin}_T(\Rho)$ with 
injective transition maps. Then its colimit $\cF:=\colim_{\alpha\in\Alpha}\cF_\alpha\in\cS^\mathrm{dis}_T(\Rho)$ has the property
that every subsheaf $\cK\subseteq\cF$, where $\cK\in\cS^\mathrm{fin}_T(\Rho)$, lies in the image of $\cF_\alpha$, for some $\alpha\in\Alpha$.
\end{enumerate}
\end{lemma}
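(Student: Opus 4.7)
The plan is to prove both parts by a compactness argument on étalé spaces, with the compactness of the profinite base $T$ and Lemma~\ref{stalkspro} as the key technical inputs.

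For part (ii), I would write $F$, $F_\alpha$, $K$ for the étalé spaces associated with $\cF$, $\cF_\alpha$, $\cK$. Each $\cF_\alpha$ is locally constant finite, so each $F_\alpha\to T$ is a finite covering map; injectivity of the transitions $\cF_\alpha\hookrightarrow\cF_\beta$ makes the induced maps $F_\alpha\hookrightarrow F_\beta$ open embeddings of these finite covers, so in the colimit $F=\bigcup_\alpha F_\alpha$ is a directed union of open subspaces. The space $K$ is itself a finite cover of the compact space $T$, hence compact, and the inclusion $\cK\hookrightarrow\cF$ corresponds to an embedding $K\hookrightarrow F$. By compactness of $K$ the directed open cover $\{F_\alpha\cap K\}_\alpha$ stabilises at some single $\alpha_0$, so $K\subseteq F_{\alpha_0}$, which is the required factorisation; it is automatically $\Rho$-linear as the unique lift of $\cK\hookrightarrow\cF$ along the stalkwise injection $\cF_{\alpha_0}\hookrightarrow\cF$.

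For part (i) I would first reduce to $\cK=\ul{N}_T$ constant, by trivialising $\cK$ over a finite clopen partition $T=\sqcup_j U_j$, handling each piece separately, and combining the resulting indices using cofilteredness of $\Alpha$. There are then two natural ways to conclude. The direct approach is to note that $F:=\lim_\alpha F_\alpha$ is a compact Hausdorff space over $T$ whose fibres identify with the stalks of $\cF$ (by Lemma~\ref{stalkspro}), and to view the surjection $p\co\cF\tura\ul{N}_T$ as a continuous, stalkwise $\Rho$-linear map $\rho\co F\to N$ into the finite discrete set $N$. Each point of $F$ then admits a basic open neighbourhood of the form $\pi_\alpha^{-1}(W)$, with $W\subseteq F_\alpha$ open, on which $\rho$ is constant; extracting a finite subcover and taking a common index $\alpha_0\in\Alpha$ dominating it (which exists by cofilteredness) forces $\rho$ to descend to a continuous $\Rho$-linear $\td\rho\co F_{\alpha_0}\to N$, equivalently $p=\td q\circ\phi_{\alpha_0}$. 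Alternatively, one may invoke Pontryagin duality to pass from $p\co\cF\tura\cK$ to an inclusion $\cK^\vee\hookrightarrow\cF^\vee=\colim_\alpha\cF^\vee_\alpha$ of a locally constant finite sheaf into a filtered colimit with injective transitions, and then apply part (ii) directly.

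The main obstacle I anticipate is the careful verification of the étalé-space statements used: that $F_\alpha\to F_\beta$ is genuinely an open embedding when the transition is injective, and, for the direct approach to (i), that $F=\lim_\alpha F_\alpha$ with its inverse-limit topology represents morphisms out of $\cF$ into constant sheaves by continuous $\Rho$-linear maps into the fibre. Both facts follow from the local constantness of the $\cF_\alpha$, but the verification requires a bit of bookkeeping, comparable to the proof of Lemma~\ref{stalkspro}. If one prefers, the duality-based route to (i) sidesteps this by reducing (i) to (ii), at the cost of first establishing that the Pontryagin duality of Section~\ref{Pontduality1} restricts to an exact contravariant equivalence between $\cS^\mathrm{pro}_T(\Rho)$ and $\cS^\mathrm{dis}_T(\Rho)$ that interchanges cofiltered limits with surjective transitions and filtered colimits with injective transitions, and preserves local constantness (via Lemma~\ref{locallyconstanthom}).
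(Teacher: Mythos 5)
Your main arguments are correct and in substance the same as the paper's: the paper also proves the lemma by a compactness argument, using Lemma~\ref{stalkspro} to factor the map stalk by stalk through some $\cF_{\phi(x)}$, then using local constancy of $\cK$ to make $\phi$ locally constant and compactness of $T$ to extract a single index via cofilteredness. Your version merely relocates the compactness from the base $T$ to the \'etal\'e spaces (compactness of $K$ in (ii), the inverse-limit topology on $F$ in (i)), and your reduction to constant $\cK$ over a clopen partition plays the role of the paper's local constancy of $\phi$; these are the same argument in different packaging. One caution about your proposed alternative for (i): in the paper's logical order, the Pontryagin duality between $\cS^\mathrm{pro}_T(\Rho)$ and $\cS^\mathrm{dis}_T(\Rho)$ (Proposition~\ref{Pontryagindualitydef} and Theorem~\ref{Pontryagindualitysheaves}) is itself proved \emph{using} item (i) of this lemma, so deducing (i) from (ii) via duality would be circular unless you establish the required duality statements independently first -- a cost you acknowledge, but which here amounts to redoing the work the lemma is meant to enable.
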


\begin{proof}The proofs of the two items are similar, so we only prove the first item. 
By Lemma~\ref{stalkspro}, for all $x\in T$, there is an isomorphism $\cF_x=\lim_{\alpha\in\Alpha}(\cF_\alpha)_x$.
This implies that, for all $x\in T$, the induced epimorphism $\cF_x\to\cK_x$ factors through some natural epimorphism $\cF_x\to(\cF_{\phi(x)})_x$,
for some function $\phi\co T\to\Alpha$. Since the sheaf $\cK$ on $T$ is locally constant, we can assume that the function $\phi$ is also locally 
constant. Since $T$ is compact, the image of $\phi$ in $\Alpha$ then admits an upper bound $\alpha\in\Alpha$ and the conclusion follows.
\end{proof}

The following general topology lemma will be useful:

\begin{lemma}\label{lochomcomphaus}A surjective local homeomorphism between compact Hausdorff spaces is a covering map.
\end{lemma}

\begin{proof}Not difficult to prove and a particular case of the more general \cite[Lemma~2]{Ho}.
\end{proof}

We say that a sheaf of discrete sets is \emph{compact} (resp.\ \emph{Hausdorff}) if its associated \'etal\'e space is 
compact (resp.\ Hausdorff). We then have:

\begin{lemma}\label{compactsubsheaf}A compact subsheaf of a locally constant sheaf over a compact Hausdorff space is locally constant.
\end{lemma}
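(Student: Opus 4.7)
The plan is to analyze the \'etal\'e space of $\cG$ as a subspace of the \'etal\'e space of $\cF$ that is both open (being a subsheaf) and compact (by hypothesis), and then extract a local trivialization from the combination of these two features. To begin, I would reduce to the case where $\cF$ is globally constant: since $T$ has a basis of clopens and $\cF$ is locally constant, each $t\in T$ has a clopen neighborhood $U$ with $\cF|_U\cong\ul{M}_U$ for some discrete torsion $\Rho$-module $M$. Local constancy is a local property, so I may replace $T$ by $U$; the restriction $\cG|_U$ remains compact because its \'etal\'e space $\pi_\cG^{-1}(U)$ is clopen in the compact space $|\cG|$.

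Next, I would describe $|\cG|$ concretely as an open subspace of $|\ul{M}|=M\times T$ (an injection of sheaves induces an open embedding of \'etal\'e spaces, by the definition of the \'etal\'e topology in terms of sections). Because $M$ carries the discrete topology, every open subspace of $M\times T$ has the form $\bigsqcup_{m\in M}\{m\}\times A_m$ with each $A_m$ open in $T$. The composition $|\cG|\hookrightarrow M\times T\to M$ is continuous with discrete target, so compactness of $|\cG|$ forces its image to be finite, and we may write $|\cG|=\bigsqcup_{i=1}^n\{m_i\}\times A_{m_i}$ for finitely many elements $m_1,\ldots,m_n\in M$.

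The conceptual linchpin, which I regard as the main (although quite clean) obstacle, is the observation that $M\times T$ is Hausdorff, so the compact subspace $|\cG|$ is automatically closed in it. Intersecting with the clopen slice $\{m_i\}\times T$, each $A_{m_i}$ turns out to be closed as well as open, hence clopen in~$T$. This is what upgrades the decomposition of step~2 from a mere open partition into a clopen one.

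To finish, for a given $t\in T$ I would let $J:=\{i:t\in A_{m_i}\}$ and form the set $W:=\bigcap_{i\in J}A_{m_i}\cap\bigcap_{i\notin J}(T\ssm A_{m_i})$, a finite intersection of clopens, hence a clopen neighborhood of $t$. For every $u\in W$ one has $u\in A_{m_i}$ iff $i\in J$, so $\cG_u$ is constantly equal to the $\Rho$-submodule $\cG_t=\{m_i:i\in J\}\subseteq M$. Over $W$ the \'etal\'e space of $\cG$ therefore agrees with $\cG_t\times W$, matching the \'etal\'e space of $\ul{\cG_t}_W$, which exhibits $\cG|_W$ as a constant sheaf and proves local constancy at $t$. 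Once the Hausdorff--compact principle of step~3 has promoted the $A_{m_i}$ to clopen sets, the combinatorial partition argument is routine because $T$ is profinite.
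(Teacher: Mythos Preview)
Your proof is correct and follows essentially the same approach as the paper's: both exploit that the \'etal\'e space of $\cF$ is Hausdorff, so the compact subspace $|\cG|$ is closed, forcing finite fibers and permitting the construction of a trivializing clopen neighborhood. The paper states this argument more tersely and abstractly, whereas you first reduce to a globally constant $\cF\cong\ul{M}_T$ and then carry out the combinatorics explicitly via the finite clopen partition by the $A_{m_i}$; this extra reduction is harmless and makes the final trivialization step entirely transparent.
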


\begin{proof}A locally constant sheaf $\cF$ on a Hausdorff space $T$ is Hausdorff, so that a compact subsheaf $\cK$ of $\cF$ 
is also Hausdorff. The associated \'etal\'e space $K$ is then a compact Hausdorff space and the natural map $\pi\co K\to T$ 
is a local homeomorphism between compact Hausdorff spaces and so, by Lemma~\ref{lochomcomphaus}, a covering map.
\end{proof}   

\begin{remark}\label{subsheaffinite}By Lemma~\ref{compactsubsheaf}, a compact subsheaf of 
a sheaf in $\cS^\mathrm{fin}_T(\Rho)$ is also in $\cS^\mathrm{fin}_T(\Rho)$.
\end{remark}

\begin{lemma}\label{compactsubquotient}Every $\cF\in\cS^\mathrm{pro}_T(\Rho)$ is the limit of a cofiltered diagram 
in $\cS^\mathrm{fin}_T(\Rho)$ with surjective transition maps.
\end{lemma}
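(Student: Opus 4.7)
The plan is to start from a cofiltered presentation $\cF=\lim_{\alpha\in\Alpha}\cF_\alpha$ with each $\cF_\alpha\in\cS^\mathrm{fin}_T(\Rho)$, which exists by the very definition of $\cS^\mathrm{pro}_T(\Rho)$ as the cofiltered completion of $\cS^\mathrm{fin}_T(\Rho)$ inside $\Sh^\mathrm{top}_T(\Rho)$, and then to replace this diagram by its system of sheaf-theoretic images. Concretely, for each $\alpha\in\Alpha$ I would set $\cF'_\alpha:=\Im(\cF\to\cF_\alpha)$, viewed as a subsheaf of $\cF_\alpha$; no surjectivity of the original transitions is assumed.

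Two preliminary verifications have to be made. First, each $\cF'_\alpha$ still lies in $\cS^\mathrm{fin}_T(\Rho)$: local constancy follows from Remark~\ref{subsheaffinite} applied to the subsheaf $\cF'_\alpha\subseteq\cF_\alpha\in\cS^\mathrm{fin}_T(\Rho)$, while its stalks, being subsets of the finite stalks of $\cF_\alpha$, are finite. Second, for every arrow $\alpha\to\beta$ in $\Alpha$, the factorization of the structure map $\cF\to\cF_\beta$ through $\cF\to\cF_\alpha\to\cF_\beta$ forces the image of the composite $\cF'_\alpha\hookrightarrow\cF_\alpha\to\cF_\beta$ to equal $\cF'_\beta$, so that the induced transition map $\cF'_\alpha\tura\cF'_\beta$ is surjective. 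Hence $\{\cF'_\alpha\}_{\alpha\in\Alpha}$ is a cofiltered diagram in $\cS^\mathrm{fin}_T(\Rho)$ with surjective transition maps.

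The main step is then to show that the canonical morphism $\cF\to\lim_{\alpha\in\Alpha}\cF'_\alpha$ is an isomorphism, which I would verify stalkwise at every $x\in T$. Since both diagrams $\{\cF_\alpha\}$ and $\{\cF'_\alpha\}$ lie in $\cS^\mathrm{fin}_T(\Rho)$, Lemma~\ref{stalkspro} commutes stalks with their respective limits, giving $\cF_x=\lim_\alpha(\cF_\alpha)_x$ and $(\lim_\alpha\cF'_\alpha)_x=\lim_\alpha(\cF'_\alpha)_x$. Combined with the fact that the stalk of an image sheaf is the image of stalks, i.e.\ $(\cF'_\alpha)_x=\Im(\cF_x\to(\cF_\alpha)_x)$, a direct comparison of compatible families then yields $\cF_x\cong\lim_\alpha(\cF'_\alpha)_x$, as required. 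The one slightly delicate point of the argument is precisely this last identification, where one must rely on Lemma~\ref{stalkspro} to commute stalks with the cofiltered limit of the \emph{new} system $\{\cF'_\alpha\}$; the remainder is formal bookkeeping with image sheaves, justified by Remark~\ref{subsheaffinite}.
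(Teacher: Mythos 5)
Your proposal is correct and is essentially the paper's own argument: the paper likewise replaces each $\cF_\alpha$ by the image $\ol{\cF}_\alpha$ of the natural map $\cF\to\cF_\alpha$, invokes Remark~\ref{subsheaffinite} to see that these images lie in $\cS^\mathrm{fin}_T(\Rho)$, and concludes from $\cF=\lim_{\alpha\in\Alpha}\ol{\cF}_\alpha$. The only difference is that you spell out the last identification stalkwise via Lemma~\ref{stalkspro}, a detail the paper leaves implicit.
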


\begin{proof}Let $\cF$ be the limit of a cofiltered diagram $\{\cF_\alpha\}_{\alpha\in\Alpha}$ and let $\ol{\cF}_\alpha$ be the image of 
the natural map of sheaves $\cF\to\cF_\alpha$, for all $\alpha\in\Alpha$. By Remark~\ref{subsheaffinite}, 
we have that $\ol{\cF}_\alpha\in\cS^\mathrm{fin}_T(\Rho)$, for all $\alpha\in\Alpha$.
Since $\cF=\lim_{\alpha\in\Alpha}\ol{\cF}_\alpha$, the conclusion follows.
\end{proof}

We can now prove:

\begin{proposition}\label{Pontryagindualitydef}For $\cF\in\cS^\mathrm{pro}_T(\Rho)$ (resp.\ $\cF\in\cS^\mathrm{dis}_T(\Rho)$),
we have that $\cF^\vee\in\cS^\mathrm{dis}_T(\Rho)$ (resp.\ $\cF^\vee\in\cS^\mathrm{pro}_T(\Rho)$). 
\end{proposition}

\begin{proof}By Lemma~\ref{compactsubquotient}, $\cF\in\cS^\mathrm{pro}_T(\Rho)$ is the limit of a cofiltered diagram 
$\{\cF_\alpha\}_{\alpha\in\Alpha}$ in $\cS^\mathrm{fin}_T(\Rho)$ with surjective transition maps. By Lemma~\ref{compactsubsheaf}, 
(i) of Lemma~\ref{locstronglyfiltered} and Lemma~\ref{locallyconstanthom}, we then have:
\[\chom_{\ul{\ZZ}}(\cF,\ul{\Q/\Z})\cong\colim_{\alpha\in\Alpha}\chom_{\ul{\ZZ}}(\cF_\alpha,\ul{\Q/\Z})\in\cS^\mathrm{dis}_T(\Rho).\]

Similarly, a sheaf $\cF\in\cS^\mathrm{dis}_T(\Rho)$ is the filtered colimit of a diagram $\{\cF_\alpha\}_{\alpha\in\Alpha}$ 
in $\cS^\mathrm{fin}_T(\Rho)$, so that we have:
\[\chom_{\ul{\ZZ}}(\cF,\ul{\Q/\Z})\cong\lim_{\alpha\in\Alpha}\chom_{\ul{\ZZ}}(\cF_\alpha,\ul{\Q/\Z})\in\cS^\mathrm{pro}_T(\Rho).\]
\end{proof}

We have thus defined two functors:
\[\Phi_T\co\cS^\mathrm{pro}_T(\Rho)\to\cS^\mathrm{dis}_T(\Rho)^\mathrm{op}\hspace{0.3cm}\mbox{and}
\hspace{0.3cm}\Psi_T\co\cS^\mathrm{dis}_T(\Rho)\to\cS^\mathrm{pro}_T(\Rho)^\mathrm{op}.\]
It is easy to check that there are natural isomorphisms of functors $\Psi_T^\mathrm{op}\circ\Phi_T\Ra\id_{\cS^\mathrm{pro}_T(\Rho)}$ and 
$\Phi_T^\mathrm{op}\circ\Psi_T\Ra\id_{\cS^\mathrm{dis}_T(\Rho)}$, so that we have:

\begin{theorem}\label{Pontryagindualitysheaves}The functors $\Phi_T$ and $\Psi_T$ establish a duality between the 
abelian categories $\cS^\mathrm{pro}_T(\Rho)$ and $\cS^\mathrm{dis}_T(\Rho)$.
\end{theorem}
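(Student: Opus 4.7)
My plan is to produce the inverse natural transformations by combining stalk-wise Pontryagin biduality (which is classical for profinite and discrete torsion $\Rho$-modules) with the limit/colimit descriptions already implicit in Proposition~\ref{Pontryagindualitydef}. Since $\Phi_T$ and $\Psi_T$ are defined via the internal Hom into $\ul{\Q/\Z}$, the canonical evaluation pairing
\[\cF\times\chom_{\ul{\ZZ}}(\cF,\ul{\Q/\Z})\to\ul{\Q/\Z}\]
yields, by adjunction, a natural transformation $\eta_\cF\co\cF\to\Psi_T\Phi_T(\cF)$ for $\cF\in\cS^\mathrm{pro}_T(\Rho)$, and symmetrically $\epsilon_\cG\co\cG\to\Phi_T\Psi_T(\cG)$ for $\cG\in\cS^\mathrm{dis}_T(\Rho)$. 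The theorem reduces to showing that both $\eta$ and $\epsilon$ are isomorphisms.

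First I would settle the finite locally constant case. For $\cF\in\cS^\mathrm{fin}_T(\Rho)$, Proposition~\ref{Pontryagindualitydef} gives the stalk identifications $(\Phi_T(\cF))_t=(\cF_t)^\vee$ and $(\Psi_T\Phi_T(\cF))_t=((\cF_t)^\vee)^\vee$, and under these the stalk of $\eta_\cF$ at $t$ is the classical Pontryagin biduality map for the finite $\Rho$-module $\cF_t$, which is an isomorphism. Since a morphism of sheaves that is bijective on every stalk is an isomorphism, $\eta_\cF$ is an isomorphism; the same computation handles $\epsilon$ on $\cS^\mathrm{fin}_T(\Rho)$.

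For the general case I would use Lemma~\ref{compactsubquotient} to present an arbitrary $\cF\in\cS^\mathrm{pro}_T(\Rho)$ as $\lim_{\alpha\in\Alpha}\cF_\alpha$ with $\cF_\alpha\in\cS^\mathrm{fin}_T(\Rho)$ and surjective transition maps. The identity $\Phi_T(\cF)=\colim_\alpha\Phi_T(\cF_\alpha)$, established inside the proof of Proposition~\ref{Pontryagindualitydef} via Lemmas~\ref{stalkspro}, \ref{locstronglyfiltered}~(i), and~\ref{locallyconstanthom}, together with its dual $\Psi_T$-analogue for filtered colimits of finite locally constant sheaves, yields $\Psi_T\Phi_T(\cF)=\lim_\alpha\Psi_T\Phi_T(\cF_\alpha)$. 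Under these identifications the naturality of $\eta$ makes $\eta_\cF$ the limit of the maps $\eta_{\cF_\alpha}$, which are all isomorphisms by the previous step; hence $\eta_\cF$ is an isomorphism. The argument for $\epsilon_\cG$ is symmetric: write $\cG\in\cS^\mathrm{dis}_T(\Rho)$ as a filtered colimit of objects in $\cS^\mathrm{fin}_T(\Rho)$ with injective transition maps, and dualise twice.

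The main obstacle I foresee is not the finite case, which is essentially classical, but checking that the double internal-Hom really commutes with these limits and colimits in the way just described: a priori $\chom_{\ul{\ZZ}}(\_,\ul{\Q/\Z})$ need not commute with cofiltered limits in its first argument. What makes the argument work is precisely the finiteness of the transition terms (so that Lemma~\ref{locallyconstanthom} applies to each $\chom_{\ul{\Rho}}(\cF_\alpha,\ul{\Q/\Z})$) together with the pro-covering description of the stalks of a cofiltered limit of locally constant sheaves (Lemma~\ref{stalkspro}). Once those two compatibilities are in place, the naturality of $\eta$ and $\epsilon$ and the finite case together give the desired duality.
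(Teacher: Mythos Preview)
Your proposal is correct and follows the approach the paper implicitly has in mind. In fact the paper offers no proof of this theorem at all: immediately before the statement it simply asserts ``It is easy to check that there are natural isomorphisms of functors $\Psi_T\circ\Phi_T^\mathrm{op}\cong\id_{\cS^\mathrm{pro}_T(\Rho)}$ and $\Phi_T\circ\Psi_T^\mathrm{op}\cong\id_{\cS^\mathrm{dis}_T(\Rho)}$'' and then states the theorem without a proof environment. Your argument---stalkwise biduality on $\cS^\mathrm{fin}_T(\Rho)$ via Proposition~\ref{Pontryagindualitydef}, then passage to general objects by the limit/colimit identities established in the proof of that proposition (using Lemmas~\ref{stalkspro}, \ref{locallyconstanthom}, \ref{locstronglyfiltered} and \ref{compactsubquotient})---is exactly the verification the paper is leaving to the reader, and your identification of the one genuine point to check (that the double internal Hom commutes with the relevant cofiltered limits, which is handled by the finiteness of the $\cF_\alpha$) is on target.
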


\begin{remark}\label{commlimits}Note that the functor $\Psi_T$ commutes with filtered colimits but, in general, the functor $\Phi_T$ 
only commutes with limits of cofiltered diagrams with surjective transition maps (cf.\ the proof of Proposition~\ref{Pontryagindualitydef}).
\end{remark}

An immediate consequence of Theorem~\ref{Pontryagindualitysheaves} and Lemma~\ref{compactsubquotient} is the following:

\begin{corollary}\label{completion}Every sheaf in $\cS^\mathrm{dis}_T(\Rho)$ is a filtered colimit of sheaves in $\cS^\mathrm{fin}_T(\Rho)$
along injective transition maps. 
\end{corollary}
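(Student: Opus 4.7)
The plan is to deduce the statement by dualizing Lemma~\ref{compactsubquotient} through the Pontryagin duality of Theorem~\ref{Pontryagindualitysheaves}. Given $\cG\in\cS^\mathrm{dis}_T(\Rho)$, I would first form its dual $\cG^\vee\in\cS^\mathrm{pro}_T(\Rho)$ via Proposition~\ref{Pontryagindualitydef}, and then apply Lemma~\ref{compactsubquotient} to write it as a cofiltered limit $\cG^\vee\cong\lim_{\alpha\in\Alpha}\cF_\alpha$ in $\cS^\mathrm{fin}_T(\Rho)$ with surjective transition maps.

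Next I would re-apply the duality. The natural isomorphism $\Phi_T\circ\Psi_T^{\mathrm{op}}\cong\id_{\cS^\mathrm{dis}_T(\Rho)}$ of Theorem~\ref{Pontryagindualitysheaves} gives $\cG\cong(\cG^\vee)^\vee$, and the computation already carried out inside the proof of Proposition~\ref{Pontryagindualitydef} (relying on Lemma~\ref{compactsubsheaf}, item~(i) of Lemma~\ref{locstronglyfiltered}, and Lemma~\ref{locallyconstanthom}) yields the key identification
\[
\Bigl(\lim_{\alpha\in\Alpha}\cF_\alpha\Bigr)^{\!\vee}\;\cong\;\colim_{\alpha\in\Alpha}\cF_\alpha^\vee.
\]
For each $\alpha$, the dual $\cF_\alpha^\vee$ still lies in $\cS^\mathrm{fin}_T(\Rho)$: by the stalk formula of Proposition~\ref{Pontryagindualitydef} its stalks are the Pontryagin duals of finite modules, hence finite; and by Lemma~\ref{locallyconstanthom} local constancy is preserved. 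Finally, since $\Phi_T$ and $\Psi_T$ form a contravariant equivalence of abelian categories, they exchange epimorphisms with monomorphisms, so the surjective transition maps $\cF_\alpha\to\cF_\beta$ become injective maps $\cF_\beta^\vee\to\cF_\alpha^\vee$.

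Assembling these observations presents $\cG$ as a filtered colimit in $\cS^\mathrm{fin}_T(\Rho)$ along injective transition maps, which is precisely the claim. The only substantive ingredient is the commutation of $(\;\cdot\;)^\vee$ with the cofiltered limit coming from Lemma~\ref{compactsubquotient}, and this is already established inside the proof of Proposition~\ref{Pontryagindualitydef}. Consequently I expect the argument to be short, the only mild subtlety being the bookkeeping that a \emph{surjective} system on the pro side dualizes to an \emph{injective} system on the discrete side.
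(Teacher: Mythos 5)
Your argument is correct and is exactly the route the paper takes: it presents this corollary as an immediate consequence of Theorem~\ref{Pontryagindualitysheaves} together with Lemma~\ref{compactsubquotient}, with the commutation of $(\cdot)^\vee$ with the cofiltered limit already supplied inside the proof of Proposition~\ref{Pontryagindualitydef}. Your added bookkeeping (finiteness and local constancy of each $\cF_\alpha^\vee$, and the exchange of epimorphisms for monomorphisms under the contravariant equivalence) just makes explicit what the paper leaves implicit.
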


We can then determine more precisely the topology of the sheaves in $\cS^\mathrm{dis}_T(\Rho)$:

\begin{proposition}\label{locallycompact}The \'etal\'e space of a sheaf in $\cS^\mathrm{dis}_T(\Rho)$ 
is a totally disconnected locally compact Hausdorff space.
\end{proposition}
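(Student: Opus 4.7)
The plan is to invoke Corollary~\ref{completion} to present any $\cF\in\cS^\mathrm{dis}_T(\Rho)$ as a filtered colimit $\colim_{\alpha\in\Alpha}\cF_\alpha$ of sheaves $\cF_\alpha\in\cS^\mathrm{fin}_T(\Rho)$ along injective transition maps, and then to transport topological properties of the \'etal\'e spaces $F_\alpha$ associated to the $\cF_\alpha$ to the \'etal\'e space $F$ of $\cF$. Since an injection of sheaves of sets corresponds to an open embedding of the associated \'etal\'e spaces, $F$ can be identified with the union $\bigcup_{\alpha\in\Alpha}F_\alpha$, each $F_\alpha$ being an open subspace of $F$.

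The first step is to verify that each $F_\alpha$ is profinite. Since $\cF_\alpha$ is locally constant with finite stalks over the profinite space $T$, one can refine a trivializing cover to a finite disjoint clopen partition of $T$ by subsets $U_i$ on which $\cF_\alpha$ is constant with some finite value $M_i$. Correspondingly $F_\alpha$ is the finite disjoint union of clopen pieces homeomorphic to $U_i\times M_i$, hence totally disconnected, compact and Hausdorff, i.e., profinite.

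The second step is to read off the three required properties of $F$ from the profiniteness of each $F_\alpha$ and its openness in $F$. For Hausdorffness, two distinct points $x,y\in F$ lie, by filteredness of $\Alpha$, in a common $F_\alpha$, inside which they can be separated by disjoint opens — themselves open in $F$. For local compactness, any $x\in F$ sits in some $F_\alpha$ and admits there a clopen neighborhood $W$, which is open in $F$ (as $F_\alpha$ is) and compact in $F$ (compactness is intrinsic). For total disconnectedness, such a $W$ is closed in the Hausdorff $F$ as a compact subset, hence clopen; such $W$ already form a neighborhood base of $x$ in $F_\alpha$ and therefore also in $F$.

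The main subtlety lies in Hausdorffness, since filtered colimits of Hausdorff spaces need not be Hausdorff in general. What saves the argument is that the transition maps are open embeddings in a filtered diagram, so that any pair of points of $F$ can be realized inside a single member $F_\alpha$; once this is in hand, local compactness and total disconnectedness follow routinely from the profiniteness of the $F_\alpha$ and their openness in $F$.
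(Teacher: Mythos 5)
Your proof is correct, and it reaches the conclusion by a genuinely different mechanism than the paper for the crucial Hausdorffness step. The paper also starts from Corollary~\ref{completion}, but then invokes \cite[Lemma~3.3]{Strickland} to conclude that $F$ is compactly generated \emph{weakly} Hausdorff, uses weak Hausdorffness to show that images of clopen subsets of $T$ under local sections are closed in $F$, and only then upgrades to genuine Hausdorffness, local compactness and total disconnectedness. You instead observe that a monomorphism of sheaves induces an open embedding of \'etal\'e spaces (any morphism of \'etal\'e spaces over $T$ is a local homeomorphism, hence open, and injectivity on stalks gives injectivity of the map), so that each $F_\alpha$ sits as an open profinite subspace of $F$ and, by filteredness, any two points of $F$ lie in a single $F_\alpha$ where they can be separated by opens that remain open in $F$. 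This bypasses the CGWH machinery entirely and is more self-contained; the paper's route, on the other hand, establishes along the way the CGWH property of $F$, which it reuses implicitly elsewhere (e.g.\ in Proposition~\ref{stronglyfilt} via \cite[Lemma~3.8]{Strickland}). Your closing remark correctly identifies why the usual failure of Hausdorffness for colimits does not occur here: the transition maps are open embeddings, so separating neighborhoods chosen at a finite stage persist in the colimit. Note also that, since each $F_\alpha$ is compact and $F_\beta$ is Hausdorff, the inclusions $F_\alpha\hookrightarrow F_\beta$ are in fact clopen, which is why both your argument (needing openness) and the paper's appeal to Strickland (needing closedness) apply.
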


\begin{proof}Let $(F,\pi)$ be \'etal\'e space of a sheaf $\cF\in\cS^\mathrm{dis}_T(\Rho)$. By Corollary~\ref{completion},
the space $F$ is a filtered colimit of totally disconnected compact Hausdorff spaces along injective transition maps.
By \cite[Lemma~3.3]{Strickland}, it then follows that $F$ is a compactly generated weakly Hausdorff space 
(cf.\ \cite[Definition~1.1 and Definition~1.2]{Strickland}).

The profinite space $T$ has a base for the topology which consists of clopen subsets. The fact that $F$ is compactly generated weakly Hausdorff 
then implies that the image of a clopen subset of $T$ by a local continuous section of the natural map $\pi\co F\to T$ is closed in $F$. 
Since the \'etal\'e space $F$ is locally homeomorphic to $T$, it follows that we can separate points in the fibers of $\pi$ by means of compact
clopen profinite subsets and then that $F$ is Hausdorff, locally compact and totally disconnected.  
\end{proof}

\subsection{The category of pro\'etal\'e spaces of profinite $\Rho$-modules over $T$}\label{definitionetpro}
To a cofiltered diagram $\{\cF_\alpha\}_{\alpha\in\Alpha}$ in $\cS^\mathrm{fin}_T(\Rho)$ with limit
$\cF\in\cS_T^\mathrm{pro}(\Rho)$, we can associate the limit $(F,\pi)$, in the category of topological spaces over $T$, of the corresponding
cofiltered diagram of \'etal\'e spaces $\{(F_\alpha,\pi_\alpha)\}_{\alpha\in\Alpha}$. The natural map $\pi\co F\to T$ is then a continuous surjective 
map such that every fiber $F_t:=\pi^{-1}(t)$, for $t\in T$, with the induced topology, is a profinite $\Rho$-module isomorphic to 
$\lim_{\alpha\in\Alpha}(\cF_\alpha)_t$ (cf.\ Lemma~\ref{stalkspro}). 
Moreover, the $\Rho$-module structure on each fiber varies continuously with $\pi$, that is to say the natural maps:
\begin{equation}\label{modulestruct}
\mu\co\Rho\times F\to F\hspace{1cm}\mbox{and}\hspace{1cm}\sigma\co F\times_T F\to F,
\end{equation}
defined by $\mu(r,x):=r\cdot x$, for $r\in\Rho$ and $x\in F$, and by $\sg(x,y):=x+y$, for $(x,y)\in F\times_T F$, are continuous.
Note that the sheaf of continuous sections of $\pi\co F\to T$ is an object of the category $\Sh^\mathrm{top}_T(\Rho)$ which identifies with $\cF$.

Note, however, that, if $\{\cF_\alpha'\}_{\alpha\in\Alpha}$ is another cofiltered diagram in $\cS^\mathrm{fin}_T(\Rho)$ with the same limit 
$\cF\in\cS_T^\mathrm{pro}(\Rho)$, this does not imply that the associated topological space $(F',\pi')$ is isomorphic to $(F,\pi)$ over $T$, 
but only that $(F',\pi')$ and $(F,\pi)$ have isomorphic sheaves of continuous sections (both isomorphic to $\cF$). 
For this reason, we will need to give a more restrictive definition of the spaces we want to associate to sheaves
of profinite $\Rho$-modules.

Let $\cE_T^\mathrm{fin}(\Rho)$ be the category of \'etal\'e spaces associated to locally constant sheaves of finite discrete $\Rho$-modules. 
This is a subcategory of the category of finite \'etale coverings of the profinite space $T$. We then define:

\begin{definition}\label{proetalespace}A \emph{pro\'etal\'e space $(F,\pi)$ of profinite $\Rho$-modules over $T$} is the limit of a cofiltered
diagram $\{F_\alpha\}_{\alpha\in\Alpha}$ in $\cE_T^\mathrm{fin}(\Rho)$ with \emph{surjective} transition maps. 
We then let $\cE^\mathrm{pro}_T(\Rho)$ be the category with objects pro\'etal\'e spaces of profinite $\Rho$-modules over $T$ 
and with morphisms continuous maps over $T$ which restrict on the fibers over $T$ to homomorphisms of profinite $\Rho$-modules. 
\end{definition}

There is a natural functor $\Theta_T\co\cE^\mathrm{pro}_T(\Rho)\to\cS_T^\mathrm{pro}(\Rho)$ which assigns to a pro\'etal\'e space 
$(F,\pi)$ of profinite $\Rho$-modules over $T$ the sheaf of continuous sections of the map $\pi\co F\to T$.

\begin{theorem}\label{prosheavesproetale}The functor $\Theta_T\co\cE^\mathrm{pro}_T(\Rho)\to\cS_T^\mathrm{pro}(\Rho)$ 
is an equivalence of categories. In particular, $\cE^\mathrm{pro}_T(\Rho)$ is a complete abelian category.
\end{theorem}

\begin{proof}By definition of the category $\cE_T^\mathrm{fin}(\Rho)$, the restriction of the functor $\Theta_T$ to the subcategories 
$\cE_T^\mathrm{fin}(\Rho)$ and $\cS^\mathrm{fin}_T(\Rho)$ is an equivalence. 
By Lemma~\ref{compactsubquotient}, a sheaf $\cF\in\cS_T^\mathrm{pro}(\Rho)$ is the limit
of the cofiltered diagram (with surjective transition maps) of all its quotients which belong to $\cS^\mathrm{fin}_T(\Rho)$. 
Therefore, the functor $\Theta_T$ is at least split essentially surjective. The following lemma then shows that it is fully faithful 
and so an equivalence of categories:

\begin{lemma}\label{canonetale}For $\cF,\cF'\in\cS_T^\mathrm{pro}(\Rho)$, let $\{\cF_\alpha\}_{\alpha\in\Alpha}$ and
$\{\cF_\beta'\}_{\beta\in\Beta}$ be cofiltered diagrams in $\cS^\mathrm{fin}_T(\Rho)$ with surjective transition maps and limits 
$\cF$ and $\cF'$, respectively (cf.\ Lemma~\ref{compactsubquotient}) and let $\{(F_\alpha,\pi_\alpha)\}_{\alpha\in\Alpha}$ and 
$\{(F_\beta',\pi_\beta'\}_{\beta\in\Beta}$ be their respective associated cofiltered diagrams of \'etal\'e spaces in $\cE_T^\mathrm{fin}(\Rho)$ 
with limits $(F,\pi)$ and $(F',\pi')$ in $\cE^\mathrm{pro}_T(\Rho)$, respectively. Then, for any morphism of sheaves $\phi\co\cF\to\cF'$, 
there is a unique morphism $\tilde\phi\co F\to F'$ of pro\'etal\'e spaces of profinite $\Rho$-modules over $T$ which satisfies the following 
property: after identifying $\cF$ and $\cF'$ with the sheaves of sections of the maps $\pi\co F\to T$ and $\pi'\co F'\to T$, respectively,
the morphism $\phi$ identifies with the morphism induced by $\tilde\phi$ on sheaves of sections.
\end{lemma}

\begin{proof}By the first item of Lemma~\ref{locstronglyfiltered}, a morphism $\phi\co\cF\to\cF'$ is the limit of a cofiltered diagram
of morphisms $\{\phi_{\alpha\beta}\co\cF_\alpha\to\cF'_\beta\}_{\alpha\in\Alpha',\beta\in\Beta}$, where $\Alpha'$ is a cofiltered 
subdiagram of $\Alpha$. Then, we let $\tilde\phi\co F\to F'$ be the limit of the associated cofiltered diagram of morphisms in 
$\cE_T^\mathrm{fin}(\Rho)$. This is a morphism in $\cE^\mathrm{pro}_T(\Rho)$ which satisfies the property stated in the lemma.

In order to show that $\tilde\phi$ is uniquely determined by $\phi$ (and so conclude the proof of the lemma)
let us observe that $\tilde\phi$ is determined by its restriction to the fibers over $T$. 

By Lemma~\ref{stalkspro}, 
it is then enough to prove that, for every $t\in T$, the homomorphism of profinite $\Rho$-modules 
$\phi_t\co\lim_{\alpha\in\Alpha}(\cF_\alpha)_t\to\lim_{\beta\in\Beta}(\cF_\beta')_t$, induced by the cofiltered diagram
of morphisms $\{\phi_{\alpha\beta}\co\cF_\alpha\to\cF'_\beta\}_{\alpha\in\Alpha',\beta\in\Beta}$, is determined by $\phi$
and does not depend on the realizations of the sheaves $\cF$ and $\cF'$ as limits of the given cofiltered diagrams.

By Theorem~\ref{Pontryagindualitysheaves}, the morphism $\phi\co\cF\to\cF'$ determines a morphism $\phi^\vee\co(\cF')^\vee\to\cF^\vee$
in $\cS^\mathrm{dis}_T(\Rho)$ and, in particular, a morphism of stalks $\phi^\vee_t\co((\cF')^\vee)_t\to(\cF^\vee)_t$, for all $t\in T$.

Since the transition maps of the cofiltered diagrams$\{\cF_\alpha\}_{\alpha\in\Alpha}$ and $\{\cF_\beta'\}_{\beta\in\Beta}$ are surjective, 
by Remark~\ref{commlimits}, the sheaves $\cF^\vee$ and $(\cF')^\vee$ are the colimits of the filtered diagrams 
$\{\cF_\alpha^\vee\}_{\alpha\in\Alpha}$ and $\{(\cF_\beta')^\vee\}_{\beta\in\Beta}$.  
Since passing to stalks commutes with colimits, by Lemma~\ref{locallyconstanthom}, we have: 
\[(\cF^\vee)_t=\colim_{\alpha\in\Alpha}(\cF_\alpha^\vee)_t=\colim_{\alpha\in\Alpha}((\cF_\alpha)_t)^\vee\hspace{0.4cm}
\mbox{and}\hspace{0.4cm}((\cF')^\vee)_t=\colim_{\beta\in\Beta}((\cF_\beta')^\vee)_t=\colim_{\beta\in\Beta}((\cF_\beta')_t)^\vee.\]
Passing to Pontryagin duals, we then get the identities:
\[(\cF^\vee)_t^\vee=(\colim_{\alpha\in\Alpha}((\cF_\alpha)_t)^\vee)^\vee=\lim_{\alpha\in\Alpha}(\cF_\alpha)_t\hspace{0.4cm}
\mbox{and}\hspace{0.4cm}((\cF')^\vee)_t^\vee=(\colim_{\beta\in\Beta}((\cF_\beta')_t)^\vee)^\vee=\lim_{\beta\in\Beta}(\cF_\beta')_t.\]
So that the Pontryagin dual of $\phi^\vee_t$ induces a homomorphism of profinite $\Rho$-modules:
\[(\phi^\vee_t)^\vee\co\lim_{\alpha\in\Alpha}(\cF_\alpha)_t\to\lim_{\beta\in\Beta}(\cF_\beta')_t,\]
which clearly identifies with the homomorphism $\phi_t$ above, thus proving the unicity claim of the lemma.
\end{proof}
\end{proof}

\begin{remark}\label{stalkindependence}A corollary of the proof of Theorem~\ref{prosheavesproetale} is that, given cofiltered diagrams
$\{\cF_\alpha\}_{\alpha\in\Alpha}$ and $\{\cF_\beta'\}_{\beta\in\Beta}$ in $\cS^\mathrm{fin}_T(\Rho)$ with surjective transition maps and 
the same limit in $\cS^\mathrm{pro}_T(\Rho)$, there is a natural isomorphism 
$\lim_{\alpha\in\Alpha}(\cF_\alpha)_t\cong\lim_{\beta\in\Beta}(\cF_\beta')_t$, for all $t\in T$.
\end{remark}

\subsection{The category of pro\'etal\'e spaces of profinite $\Rho$-modules}
We let $\cE^\mathrm{pro}(\Rho)$ be the category whose objects are pro\'etal\'e spaces of profinite $\Rho$-modules over a profinite space 
and whose morphisms are defined as follows. For $(F,\pi)\in\cE^\mathrm{pro}_T(\Rho)$ and $(G,\pi')\in\cE^\mathrm{pro}_{T'}(\Rho)$,
a \emph{morphism $(\phi,\psi)\co(F,\pi)\to(G,\pi')$ of pro\'etal\'e spaces of profinite 
$\Rho$-modules} is a commutative diagram of continuous maps:
\begin{equation}\label{morphism}
\begin{array}{ccc}
F&\sr{\phi}{\to}&G\\
\;\;\da^{\pi}&&\;\;\da^{\pi'}\\
T&\sr{\psi}{\to}&T'
\end{array}
\end{equation}
such that, for all $t\in T$, the restriction $\phi_t\co F_t\to G_{\psi(t)}$ is a homomorphism of profinite $\Rho$-modules.
It is easy to check that to give such a morphism is equivalent to give a morphism between the corresponding sheaves of continuous 
sections $\phi\co\cF\to\psi^{-1}\cG$.

\begin{remark}\label{notcomplete}Even though, in virtue of Theorem~\ref{prosheavesproetale}, the category $\cE^\mathrm{pro}_T(\Rho)$
is complete, this is not true for the category $\cE^\mathrm{pro}(\Rho)$ (cf.\ Section~\ref{comparison}).
\end{remark}

\subsection{The category of \'etal\'e spaces of discrete $\Rho$-modules}\label{definitionetdis}
An \emph{\'etal\'e space $(F,\pi)$ of discrete $\Rho$-modules over a profinite space $T$} is the \'etal\'e space associated to a sheaf 
$\cF\in\cS^\mathrm{dis}_T(\Rho)$. By Proposition~\ref{locallycompact}, $F$ is a totally disconnected locally compact Hausdorff space.
The corresponding category $\cE^\mathrm{dis}_T(\Rho)$ is equivalent to the category $\cS^\mathrm{dis}_T(\Rho)$. 
In particular, it is a cocomplete abelian category.

The category $\cE^\mathrm{fin}_T(\Rho)$ also identifies with a full subcategory of $\cE^\mathrm{dis}_T(\Rho)$ and the latter
is the cocompletion of the former inside the category of \'etal\'e spaces on $T$, that is to say
every object of $\cE^\mathrm{dis}_T(\Rho)$ is a filtered colimit of a diagram in the category $\cE^\mathrm{fin}_T(\Rho)$.

As above, for $(G,\pi')$ the \'etal\'e space on a profinite space $T'$ associated to a sheaf $\cG\in\cS^\mathrm{dis}_{T'}(\Rho)$,
we define a \emph{morphism $(\phi,\psi)\co(F,\pi)\to(G,\pi')$} to be a commutative diagram of the type~\eqref{morphism}. 
Such a morphism is equivalent to a morphism of sheaves of sections $\phi\co\cF\to\psi^{-1}\cG$.
We denote by $\cE^\mathrm{dis}(\Rho)$ the corresponding category of \'etal\'e spaces of discrete $\Rho$-modules.

\subsection{Strongly filtered compactly generated weakly Hausdorff spaces}
\begin{definition}\label{stronglyfiltered}A filtered diagram $\{X_\alpha\}_{\alpha\in\Alpha}$ of inclusions of compact Hausdorff spaces is 
\emph{strongly filtered} if every compact subset of $X:=\colim_{\alpha\in\Alpha}X_\alpha$ lies in the image of some $X_\alpha$
(cf.\ \cite[Definition~3.4]{Strickland}). 
We then say that the topological space $X$ is \emph{strongly filtered} by the diagram $\{X_\alpha\}_{\alpha\in\Alpha}$. 
\end{definition}

By Corollary~\ref{completion}, every \'etal\'e space $(F,\pi)\in\cE^\mathrm{dis}_T(\Rho)$ is the colimit of a filtered diagram 
$\{(F_\alpha,\pi_\alpha)\}_{\alpha\in\Alpha}$ of inclusions in $\cE^\mathrm{fin}_T(\Rho)$. We have:

\begin{proposition}\label{stronglyfilt}With the above notations, $F$ is strongly filtered by the diagram $\{F_\alpha\}_{\alpha\in\Alpha}$. 
\end{proposition}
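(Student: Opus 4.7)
The plan is to reduce the statement to the observation that each $F_\alpha$ sits inside $F$ as an \emph{open} subspace, so that $\{F_\alpha\}_{\alpha\in\Alpha}$ constitutes an open cover of $F$ by compact subsets. Once this is established, the strongly filtered property will follow by a routine compactness plus filteredness argument.

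First I would verify that for any inclusion $\cF_\alpha\hookrightarrow\cF_\beta$ (with $\alpha\le\beta$) or $\cF_\alpha\hookrightarrow\cF$ in $\cS^\mathrm{dis}_T(\Rho)$, the induced map on étalé spaces is an open embedding. Injectivity of the map on stalks $(\cF_\alpha)_x\hookrightarrow\cF_x$ translates directly into injectivity of the continuous map $F_\alpha\to F$ of étalé spaces. Since both projections to $T$ are local homeomorphisms and the map to $F$ commutes with them, $F_\alpha\to F$ is itself a local homeomorphism; any continuous injective local homeomorphism is automatically an open embedding. Thus each $F_\alpha$ is an open subspace of $F$, and $F_\alpha$ is compact because $\cF_\alpha\in\cS^\mathrm{fin}_T(\Rho)$ corresponds to a finite covering of the compact space $T$.

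Next, since filtered colimits of sheaves are computed stalkwise and the transition maps are injective, one has $\cF_x=\bigcup_{\alpha\in\Alpha}(\cF_\alpha)_x$ for every $x\in T$, and consequently $F=\bigcup_{\alpha\in\Alpha}F_\alpha$ as sets. Combined with the previous step, $\{F_\alpha\}_{\alpha\in\Alpha}$ is therefore an open cover of $F$. Given any compact subset $K\subseteq F$ (recall that $F$ is locally compact Hausdorff by Proposition~\ref{locallycompact}), we extract a finite subcover $K\subseteq F_{\alpha_1}\cup\cdots\cup F_{\alpha_n}$, and by filteredness of $\Alpha$ choose $\beta\in\Alpha$ with $\beta\ge\alpha_i$ for all $i$. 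Functoriality of the diagram yields $F_{\alpha_i}\subseteq F_\beta$, hence $K\subseteq F_\beta$, which is exactly what it means for $F$ to be strongly filtered by $\{F_\alpha\}_{\alpha\in\Alpha}$.

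The only genuine point to check carefully is that the inclusion $F_\alpha\hookrightarrow F$ is open, which I would verify directly on a basis of the étalé topology: a local section $s\in\cF_\alpha(U)$ over a clopen $U\subseteq T$ defines an open set $\{s_x:x\in U\}$ in $F$ that is entirely contained in $F_\alpha$, and such sets form a neighborhood basis at every point of $F_\alpha$. No further compactness subtleties arise because each $F_\alpha$ is itself already compact, so the finite subcover is extracted from an honest open cover of $K$.
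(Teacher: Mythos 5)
Your proof is correct, but it takes a genuinely different route from the paper's. The paper first treats the case where the compact set is itself a sub-\'etal\'e space of $F$: by Proposition~\ref{finite} its sheaf of sections is locally constant, and then item (ii) of Lemma~\ref{locstronglyfiltered} (a stalkwise argument plus compactness of $T$) shows it is absorbed by some $\cF_\alpha$; the general compact subset is then reduced to this case by fattening it, via Proposition~\ref{locallycompact}, to a finite union of compact clopen section-images, which is again an \'etal\'e subspace. You instead argue entirely at the level of point-set topology on $F$: each $F_\alpha\to F$ is an injective local homeomorphism (hence an open embedding) with compact image, the $F_\alpha$ cover $F$ because stalks of a filtered colimit are unions of the stalks, and then compactness of $K$ plus filteredness of $\Alpha$ finishes the job. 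Your key observation --- that the $F_\alpha$ form a \emph{directed open cover} of $F$ by compact subsets --- is exactly what makes the absorption of compacta immediate, and it lets you bypass Lemma~\ref{locstronglyfiltered}(ii), Proposition~\ref{finite}, and Proposition~\ref{locallycompact} altogether (your parenthetical appeal to local compactness is not actually needed for extracting the finite subcover). The trade-off is that you must verify the openness of $F_\alpha$ in $F$ by hand, which you do correctly via the basis of section-images; the paper's route instead reuses sheaf-theoretic lemmas it has already established and will use again elsewhere. Both arguments are sound.
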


\begin{proof}By Lemma~\ref{lochomcomphaus}, if $K$ is a compact sub\'etal\'e space of $F$, its sheaf of sections $\cK$ is locally constant. 
In this case, the conclusion follows from item (ii) of Lemma~\ref{locstronglyfiltered}. 
The general case of a compact subset $C$ of $F$ can be reduced to this one by remarking that, 
by Proposition~\ref{locallycompact}, $C$ is contained in the union of a finite set $\{U_i\}_{i=1,\dots,k}$ of clopen compact subsets of $F$, such 
that each of the restrictions $\pi|_{U_i}\co U_i\to\pi(U_i)$ is a homeomorphism onto a clopen subset of $T$. 
The compact subspace $K:=\cup_{i=1}^kU_i$ of $F$ is then an \'etal\'e space over $T$ and we are reduced to the case already treated.
\end{proof}

\subsection{Pontryagin duality for \'etal\'e spaces}\label{Pontduality2}
For $(F,\pi)\in\cE^\mathrm{pro}_T(\Rho)$, with sheaf of sections $\cF$, we define the \emph{Pontryagin dual 
$(F^\vee,\pi^\vee)\in\cE^\mathrm{dis}_T(\Rho)$ of $(F,\pi)$} to be the \'etal\'e space associated to the Pontryagin dual $\cF^\vee$ 
of the sheaf $\cF$, defined in Section~\ref{Pontduality1}. To a morphism $\phi\co(F,\pi)\to(F',\pi')$ in $\cE^\mathrm{pro}_T(\Rho)$, 
we can then associate the morphism of \'etal\'e spaces $\phi^\vee\co((F')^\vee,(\pi')^\vee)\to(F^\vee,\pi^\vee)$ in $\cE^\mathrm{dis}_T(\Rho)$
determined by the Pontryagin dual of the morphism on sheaves of sections induced by $\phi$.
In this way, we have defined a \emph{Pontryagin duality functor}:
\[\Phi_T\co\cE^\mathrm{pro}_T(\Rho)\to\cE^\mathrm{dis}_T(\Rho)^\mathrm{op}.\]

\begin{theorem}\label{Pontryagindualityspaces}The Pontryagin duality functor establishes a duality between the abelian categories
$\cE^\mathrm{pro}_T(\Rho)$ and $\cE^\mathrm{dis}_T(\Rho)$ such that, for all $(F,\pi)\in\cE^\mathrm{pro}_T(\Rho)$ and $t\in T$,
there holds $(F^\vee)_t=(F_t)^\vee$ (so that we will denote both simply by $F^\vee_t$).
\end{theorem}

\begin{proof}From Theorem~\ref{Pontryagindualitysheaves} and Theorem~\ref{prosheavesproetale}, 
it follows that the functor $\Phi_T$ is an equivalence of categories. The identity $(F^\vee)_t=(F_t)^\vee$ 
instead follows from Lemma~\ref{stalkspro} and the last part of the proof of Theorem~\ref{prosheavesproetale}.
\end{proof}

The splitting of the functor $\Theta_T$ introduced in the proof of Theorem~\ref{prosheavesproetale}, in particular, determines an inverse
to the functor $\Phi_T$, that is to say a functor:
\[\Psi_T\co\cE^\mathrm{dis}_T(\Rho)\to\cE^\mathrm{pro}_T(\Rho)^\mathrm{op}\]
such that there are natural isomorphisms 
$\Psi_T^\mathrm{op}\circ\Phi_T\Ra\id_{\cE^\mathrm{pro}_T(\Rho)}$ and $\Phi_T^\mathrm{op}\circ\Psi_T\Ra\id_{\cE^\mathrm{dis}_T(\Rho)}$.

More explicitly, the \emph{Pontryagin duality functor} $\Psi_T$ assigns to $(F,\pi)\in\cE^\mathrm{dis}_T(\Rho)$
the limit $(F^\vee,\pi^\vee)\in\cE^\mathrm{pro}_T(\Rho)$ of the cofiltered diagram (with surjective transition maps) 
of the \'etal\'e spaces associated to the quotients in $\cS^\mathrm{fin}_T(\Rho)$ of the Pontryagin dual of the sheaf 
of continuous sections of $\pi\co F\to T$. By Lemma~\ref{stalkspro} and the last part of the proof of Theorem~\ref{prosheavesproetale}, 
we then also have that $(F^\vee)_t=(F_t)^\vee$, for all $t\in T$ (so that, as above, we will denote both simply by $F^\vee_t$).

\subsection{(Pro)\'etal\'e spaces of finite $\Rho$-modules}
Let $\cE^\mathrm{pro}_T(\Rho)_\mr{fin}$ and $\cE^\mathrm{dis}_T(\Rho)_\mr{fin}$ be, respectively, the full subcategories
of $\cE^\mathrm{pro}_T(\Rho)$ and $\cE^\mathrm{dis}_T(\Rho)$ with objects (pro)\'etal\'e spaces whose fibers are finite $\Rho$-modules. 
From the last part of the statement of Theorem~\ref{Pontryagindualityspaces}, it then follows that Pontryagin duality restricts to a duality 
between the subcategories $\cE^\mathrm{pro}_T(\Rho)_\mr{fin}$ and $\cE^\mathrm{dis}_T(\Rho)_\mr{fin}$. In fact, we have:

\begin{proposition}\label{finite}$\cE^\mathrm{pro}_T(\Rho)_\mr{fin}=\cE^\mathrm{dis}_T(\Rho)_\mr{fin}=\cE^\mathrm{fin}_T(\Rho)=
\cE^\mathrm{pro}_T(\Rho)\cap\cE^\mathrm{dis}_T(\Rho)$.
\end{proposition}

\begin{proof}The Pontryagin dual of a locally constant sheaf is also locally constant. Therefore it is enough to prove the identity
$\cE^\mathrm{dis}_T(\Rho)_\mr{fin}=\cE^\mathrm{fin}_T(\Rho)$. This is equivalent to the statement that, if an \'etal\'e space $(F,\pi)$ 
of discrete $\Rho$-modules has finite fibers, then the natural map $\pi\co F\to T$ is a covering map. By Lemma~\ref{lochomcomphaus},
it is enough to prove that $F$ is a compact Hausdorff space. By Proposition~\ref{locallycompact}, $F$ is Hausdorff and can be covered
by a finite union of compact subspaces, which implies the claim and so the proposition. 
\end{proof}

\section{Comparison with Melnikov, Ribes and Wilkes constructions}\label{comparison}
\subsection{'Sheaves' of profinite $\Rho$-modules}\label{coetale}
In \cite{Mel} and \cite[Chapter~V]{R} (cf.\ also \cite[Appendix~A]{Wilkes2}), a \emph{'sheaf' of profinite $\Rho$-modules} 
$\pi\co F\to T$ over a profinite space $T$ is defined to be a continuous surjective map of profinite spaces such that every fiber 
$F_t:=\pi^{-1}(t)$, for $t\in T$, with the induced topology, is a profinite $\Rho$-module and the maps~\eqref{modulestruct} 
are continuous. From \cite[Proposition~2.11]{Tang} and \cite[Proposition~1.11]{Haran}, it also follows that this definition is indeed equivalent 
to the definition of \'etale spaces given by Haran in \cite[Definition~1.4]{Haran}.

As it is remarked at the beginning of \cite[Section~3]{Wilkes}, these objects are no sheaves in any meaningful sense (for this reason,
following Wilkes' suggestion, above, we denoted them by 'sheaves' rather than \emph{sheaves}). In fact, by \cite[Theorem~3.3]{Wilkes}, 
the so called 'sheaves' of profinite $\Rho$-modules are the co\'etal\'e spaces associated to cosheaves of profinite $\Rho$-modules 
(cf.\ \cite[Definition~1.3]{Wilkes}) and, by \cite[Theorem~4.3]{Wilkes}, they are obtained as the Pontryagin duals of the \'etale 
spaces associated to sheaves (in the ordinary sense) of discrete $\Rho$-modules over profinite spaces.

More consistently with Wilkes' results than with his notations, we will then rather call these objects \emph{co\'etal\'e spaces of profinite 
$\Rho$-modules over a profinite space $T$} and denote by $\CoEt^\mathrm{pro}_T(\Rho)$ the corresponding category. From the remarks 
at the beginning of Section~\ref{definitionetpro} and Definition~\ref{proetalespace}, it follows that $\cE^\mathrm{pro}_T(\Rho)$ is a 
full subcategory of $\CoEt^\mathrm{pro}_T(\Rho)$.

Since $\cE^\mathrm{dis}_T(\Rho)$ is a proper (full) subcategory of the category of \'etale spaces associated to \emph{all} sheaves 
of discrete $\Rho$-modules over $T$ (cf.\ Section~\ref{skysheaves}), Theorem~\ref{Pontryagindualityspaces} and \cite[Theorem~4.3]{Wilkes} 
imply that $\cE^\mathrm{pro}_T(\Rho)$ is a proper subcategory of $\CoEt^\mathrm{pro}_T(\Rho)$ as well.

A more subtle characterization of the relation between the category $\cE^\mathrm{pro}_T(\Rho)$ and the category
$\CoEt^\mathrm{pro}_T(\Rho)$ follows from the lemma:

\begin{lemma}\label{tanglemma}For every $(F,\pi)\in\CoEt^\mathrm{pro}_T(\Rho)$, the sheaf of continuous sections
of the map $\pi\co F\to T$ belongs to the category $\cS^\mathrm{pro}_T(\Rho)$.
\end{lemma}

\begin{proof}By \cite[Proposition~2.11]{Tang}, for every $(F,\pi)\in\CoEt^\mathrm{pro}_T(\Rho)$, there exists a cofiltered diagram 
$\{(F_\alpha,\pi_\alpha,T_\alpha)\}_{\alpha\in\Alpha}$ of finite \'etal\'e spaces (i.e.\ both $T_\alpha$ and $F_\alpha$ are finite discrete sets, 
for all $\alpha\in\Alpha$) of finite $\Rho$-modules, such that $(F,\pi,T)\cong\lim_{\alpha\in\Alpha}(F_\alpha,\pi_\alpha,T_\alpha)$.
Let then $\cF$ (resp.\ $\cF_\alpha$, for $\alpha\in\Alpha$) be the sheaf of continuous sections of the map $\pi\co F\to T$
(resp.\ $\pi_\alpha\co F_\alpha\to T_\alpha$). 

Let $\psi_\alpha\co T\to T_\alpha$ be the natural map, for $\alpha\in\Alpha$. For every $\alpha\in\Alpha$, the pull-back sheaf 
$\psi_\alpha^{-1}(\cF_\alpha)$, which can also be described as the sheaf of sections of the map
$\pi_\alpha^\ast\co F_\alpha\times_{T_\alpha}T\to T$ obtained pulling back the map $\pi_\alpha$ along $\psi_\alpha$ 
(cf.\ \cite[Proposition~3.55]{Wedhorn}), is a locally constant sheaf of finite $\Rho$-modules. 

It is then clear that there is a natural morphism of sheaves $p_\alpha\co\cF\to\psi_\alpha^{-1}(\cF_\alpha)$, for every $\alpha\in\Alpha$,
and that every transition map $(F_\alpha,\pi_\alpha,T_\alpha)\to(F_\beta,\pi_\beta,T_\beta)$ induces a morphism of sheaves 
$\psi_\alpha^{-1}(\cF_\alpha)\to\psi_\beta^{-1}(\cF_\alpha)$ compatible with the pair of morphisms $p_\alpha,p_\beta$, so that
it is induced a natural isomorphism $\cF\cong\lim_{\alpha\in\Alpha}\psi_\alpha^{-1}(\cF_\alpha)$, where now, by definition, the sheaf
$\lim_{\alpha\in\Alpha}\psi_\alpha^{-1}(\cF_\alpha)$ belongs to the category $\cS^\mathrm{pro}_T(\Rho)$.
\end{proof}

Let $\Sh(\CoEt^\mathrm{pro}_T(\Rho))\subset\Sh^\mathrm{top}_T(\Rho)$ be the category of sheaves of continuous sections 
of co\'etal\'e spaces of profinite $\Rho$-modules over $T$. 
By Lemma~\ref{tanglemma}, we then have that $\Sh(\CoEt^\mathrm{pro}_T(\Rho))=\cS^\mathrm{pro}_T(\Rho)$. 
Hence, by Theorem~\ref{prosheavesproetale}, we conclude that:

\begin{theorem}\label{BoggivsWilkes}The natural embedding of the category 
$\cE^\mathrm{pro}_T(\Rho)$ in the category $\CoEt^\mathrm{pro}_T(\Rho)$
induces an equivalence between the category $\cE^\mathrm{pro}_T(\Rho)$
and the category $\Sh(\CoEt^\mathrm{pro}_T(\Rho))$.
\end{theorem}

\begin{remark}\label{objects}At the level of objects, the content of Theorem~\ref{BoggivsWilkes} can be explained as follows.
Let $\sim_\mathrm{shf}$ be the equivalence relation on the set of co\'etal\'e spaces of profinite $\Rho$-modules over $T$ such that two
co\'etal\'e spaces are shf-equivalent if and only if their associated sheaves of continuous sections are isomorphic. 
Then, Theorem~\ref{BoggivsWilkes} states that a shf-equivalence class contains a unique, up to isomorphism, pro\'etal\'e space of profinite 
$\Rho$-modules representative (cf.\ Remark~\ref{cosky} for a concrete example). 
\end{remark}

In particular, we have also defined a left inverse $\mathrm{R}_T\co\CoEt^\mathrm{pro}_T(\Rho)\to\cE^\mathrm{pro}_T(\Rho)$ to the
embedding functor $\Iota_T\co\cE^\mathrm{pro}_T(\Rho)\hookra\CoEt^\mathrm{pro}_T(\Rho)$. More explicitly, the functor $\mathrm{R}_T$
associates, to a co\'etal\'e space $(F,\pi)$ of profinite $\Rho$-modules over $T$, the pro\'etal\'e space of profinite $\Rho$-modules over $T$
associated to the sheaf of continuous sections of the map $\pi\co F\to T$ by the splitting of the functor $\Theta_T$ (cf.\ 
Theorem~\ref{prosheavesproetale}). In the next section, we will describe the functor $\mathrm{R}_T$ in terms of Pontryagin duality.

\subsection{Sheaves of discrete $\Rho$-modules}\label{alldiscrete}
Let $\Sh_T^\mathrm{dis}(\Rho)$ be the category of \emph{all} sheaves of torsion discrete $\Rho$-modules over a profinite space $T$ 
and let $\Et_T^\mathrm{dis}(\Rho)$ be the (equivalent) category with objects the associated \'etal\'e spaces. 
The categories $\cS_T^\mathrm{dis}(\Rho)$ and $\cE_T^\mathrm{dis}(\Rho)$ then identify with full subcategories
of $\Sh_T^\mathrm{dis}(\Rho)$ and $\Et_T^\mathrm{dis}(\Rho)$, respectively. Let us denote by
$\iota_T\co\cE^\mathrm{dis}_T(\Rho)\hookra\Et^\mathrm{dis}_T(\Rho)$ the natural embedding functor.
We will show that $\iota_T$ has a left inverse as well, although a more familiar one.

By definition, an \'etal\'e space $(F,\pi)\in\Et_T^\mathrm{dis}(\Rho)$ is locally homeomorphic to the profinite space $T$. 
Hence, if $F$ is Hausdorff, it is a totally disconnected, locally compact Hausdorff space and so, in particular, $(F,\pi)$ is 
the filtered colimit of its compact sub\'etal\'e subspaces. From Lemma~\ref{lochomcomphaus}, it then follows that 
$(F,\pi)\in\cE^\mathrm{dis}_T(\Rho)$. By Proposition~\ref{locallycompact}, the converse is also true. Therefore, we have:

\begin{lemma}\label{Hausdorff}The category $\cE^\mathrm{dis}_T(\Rho)$ is the full subcategory of $\Et^\mathrm{dis}_T(\Rho)$ whose
objects are Hausdorff spaces.
\end{lemma}

Let us recall (cf.\ \cite{Munster}) that, for every topological space $X$, there is a Hausdorff topological space $\Eta X$, called the
\emph{Hausdorff reflex of $X$}, and a quotient map $h_X\co X\to\Eta X$ such that every continuous map from $X$ to a 
Hausdorff space factors uniquely through $h_X$.

For $(F,\pi)\in\Et_T^\mathrm{dis}(\Rho)$, let $\Eta F$ be the Hausdorff reflex of the space $F$. By the universal property,
the continuous map $\pi\co F\to T$ induces a continuous map $\Eta(\pi)\co\Eta F\to T$.
Since continuous bijections of Hausdorff, totally disconnected, locally compact spaces are homeomorphisms, from
\cite[Theorem~6.1]{Munster}, it follows that $(\Eta F,\Eta(\pi))$, with the operations induced by $\Eta(\mu)$ and $\Eta(\sigma)$ 
(cf.~\eqref{modulestruct}), has a structure of \'etal\'e space of discrete $\Rho$-modules
and, by Lemma~\ref{Hausdorff}, we then have that $(\Eta F,\Eta(\pi))\in\cE_T^\mathrm{dis}(\Rho)$. 

We conclude that, for $(F,\pi)\in\Et_T^\mathrm{dis}(\Rho)$, the assignment $(F,\pi)\mapsto(\Eta F,\Eta(\pi))$
defines a functor (the \emph{Hausdorff reflex functor}) $\Eta_T\co\Et^\mathrm{dis}_T(\Rho)\to\cE^\mathrm{dis}_T(\Rho)$ which is a
left inverse and a left adjoint of the embedding functor $\iota_T\co\cE^\mathrm{dis}_T(\Rho)\hookra\Et^\mathrm{dis}_T(\Rho)$.

By the universal property of the Hausdorff reflex, it is also easy to check that, for $(F,\pi)\in\CoEt_T^\mathrm{pro}(\Rho)$,
the sheaf of continuous sections of the map $\pi\co F\to T$ is naturally isomorphic to the sheaf of continuous sections of
the map $\Eta(\pi^\vee)^\vee\co \Eta(F^\vee)^\vee\to T$, so that there is a natural isomorphism of functors:
 \[\mathrm{R_T}\Ra\Psi_T\circ\Eta_T^\mathrm{op}\circ\Phi_T,\]
where we denote by $\Phi_T\co\CoEt^\mathrm{pro}_T(\Rho)\to\Et^\mathrm{dis}_T(\Rho)^\mathrm{op}$ and 
$\Psi_T\co\Et^\mathrm{dis}_T(\Rho)\to\CoEt^\mathrm{pro}_T(\Rho)^\mathrm{op}$ Wilkes' Pontryagin duality functors
(cf.\ \cite[Theorem~4.3]{Wilkes}), that is to say, for all $(F,\pi)\in\CoEt^\mathrm{pro}_T(\Rho)$, there is a natural isomorphism
$\Eta(F^\vee,\pi^\vee)^\vee\cong\mathrm{R}_T(F,\pi)$. The functor $\mathrm{R}_T$ is then the right adjoint of the embedding of categories
$\Iota_T\co\cE^\mathrm{pro}_T(\Rho)\hookra\CoEt^\mathrm{pro}_T(\Rho)$.

Note that, for all $(F,\pi)\in\CoEt^\mathrm{pro}_T(\Rho)$, the Pontryagin dual of the natural quotient morphism 
$h_{F^\vee}\co (F^\vee,\pi^\vee)\to(\Eta(F^\vee),\Eta(\pi^\vee))$ defines a natural embedding: 
\[h_F^\vee\co\mathrm{R}_T(F,\pi)\hookra(F,\pi).\]
So that $\mathrm{R}_T(F,\pi)$ can also be characterized as the smallest co\'etal\'e subspace of $(F,\pi)$ which has the same sheaf
of continuous sections over $T$ as $(F,\pi)$. In conclusion, we have:

\begin{theorem}\label{BoggivsWilkes2}The category $\cE^\mathrm{dis}_T(\Rho)$ is the reflective subcategory of $\Et^\mathrm{dis}_T(\Rho)$ 
formed by Hausdorff \'etal\'e spaces and, under Pontryagin duality, $\cE^\mathrm{pro}_T(\Rho)$ is the corresponding coreflective subcategory 
of $\CoEt^\mathrm{pro}_T(\Rho)$.
\end{theorem}

\section{Natural constructions in $\cE^\mathrm{pro}_T(\Rho)$ and $\cE^\mathrm{dis}_T(\Rho)$}
\subsection{The sheaf Hom functor}
Neither of the categories $\cE^\mathrm{pro}(\Rho)$ and $\cE^\mathrm{dis}(\Rho)$ has an internal Hom functor. However, we can define 
an \'etal\'e space over $T$ of homomorphisms from an \'etal\'e space of profinite (resp.\ discrete) $\Rho$-modules to an \'etal\'e space
of discrete $\Rho$-modules on a profinite space $T$, provided, in the latter case, the target satisfies some technical conditions. 

For $(F,p)\in\cE^\mathrm{pro}_T(\Rho)$ and $(G,q)\in\cE_T^\mathrm{dis}(\Rho)$, let $\cF\in\cS^\mathrm{pro}_T(\Rho)$ and 
$\cG\in\cS_T^\mathrm{dis}(\Rho)$ be the respective sheaf of sections. The sheaf $\chom_{\ul{\Rho}}(\cF,\cG)$ is defined letting,
for an open subset $U$ of $T$:
\[\chom_{\ul{\Rho}}(\cF,\cG)(U):=\hom_\Rho(\cF|_{U},\cG|_{U})=\hom_\Rho(F|_{U},G|_{U}).\]

Let $(F,p)$ be the limit of a cofiltered diagram $\{(F_\alpha,\pi_\alpha)\}_{\alpha\in\Alpha}$ of surjective maps in 
$\cE^\mathrm{fin}_T(\Rho)$ and $(G,q)$ be the colimit of a filtered diagram $\{(G_\beta,q_\beta)\}_{\beta\in\Beta}$ of inclusions in 
$\cE^\mathrm{fin}_T(\Rho)$ (cf.\ Lemma~\ref{compactsubquotient} and Corollary~\ref{completion}). 
Let also $\cF_\alpha\in\cS_T^\mathrm{fin}(\Rho)$ 
be the sheaf of sections of $(F_\alpha,p_\alpha)$, for $\alpha\in\Alpha$, and $\cG_\beta\in\cS_T^\mathrm{fin}(\Rho)$ be the sheaf of sections of 
$(G_\beta,q_\beta)$, for $\beta\in\Beta$. By Proposition~\ref{stronglyfilt}, \cite[Lemma~3.8]{Strickland} and (i) of Lemma~\ref{locstronglyfiltered}, 
for a clopen subset $U$ of $T$, we have:
\[\begin{split}
\chom_{\ul{\Rho}}(\cF,\cG)(U)&=\hom_\Rho(F|_{U},G|_{U})\cong
\colim_{\beta\in\Beta}\colim_{\alpha\in\Alpha}\hom_\Rho(F_\alpha|_{U},G_\beta|_{U})=\\
&=\colim_{\beta\in\Beta}\colim_{\alpha\in\Alpha}\hom_\Rho(\cF_\alpha|_{U},\cG_\beta|_{U})
=\colim_{\beta\in\Beta}\colim_{\alpha\in\Alpha}\chom_{\ul{\Rho}}(\cF_\alpha,\cG_\beta)(U).
\end{split}\]
There is then a natural isomorphism of sheaves:
\[\chom_{\ul{\Rho}}(\cF,\cG)\cong\colim_{\beta\in\Beta}\colim_{\alpha\in\Alpha}\chom_{\ul{\Rho}}(\cF_\alpha,\cG_\beta).\]
By Lemma~\ref{locallyconstanthom}, we have that $\chom_{\ul{\Rho}}(\cF_\alpha,\cG_\beta)\in\cS_T^\mathrm{fin}(\ZZ)$
and then that $\chom_{\ul{\Rho}}(\cF,\cG)\in\cS_T^\mathrm{dis}(\ZZ)$.

\begin{definition}\label{homprodis}For $(F,p)\in\cE^\mathrm{pro}_T(\Rho)$ and $(G,q)\in\cE_T^\mathrm{dis}(\Rho)$, 
we let $(\chom_\Rho(F,G),\pi)\in\cE_T^\mathrm{dis}(\ZZ)$ be the \'etal\'e space associated to the sheaf $\chom_{\ul{\Rho}}(\cF,\cG)$.
\end{definition}

By Lemma~\ref{locallyconstanthom}, we have that $\chom_{\ul{\Rho}}(\cF_\alpha,\cG_\beta)_t=\hom_\Rho((\cF_\alpha)_t,(\cG_\beta)_t)$, 
for all $t\in T$, and, since the stalk functor commutes with filtered colimits, it follows that, for all $t\in T$:
\begin{equation}\label{stalkhom}
\chom_{\ul{\Rho}}(F,G)_t=\chom_{\ul{\Rho}}(\cF,\cG)_t=\hom_\Rho(\cF_t,\cG_t)=\hom_\Rho(F_t,G_t).
\end{equation}

In this way, we have defined the bifunctor:
\[\chom_\Rho(\_\,,\_)\co\cE^\mathrm{pro}_T(\Rho)^\mathrm{op}\times\cE^\mathrm{dis}_T(\Rho)\to\cE^\mathrm{dis}_T(\ZZ).\]

\begin{definition}\label{locfinite}We say that an \'etal\'e space $(F,\pi)\in\cE^\mathrm{dis}_T(\Rho)$ has \emph{locally finite torsion} 
if its sheaf of sections $\cF$ has the property that, for all $n\in\N^+$, the kernel of the morphism of sheaves $n\cdot\co\cF\to\cF$, induced
by multiplication by $n$, is a compact subsheaf of $\cF$. We denote by $\cE^\mathrm{dis}_T(\Rho)_\mathrm{lft}$ the corresponding 
full subcategory of $\cE^\mathrm{dis}_T(\Rho)$.
\end{definition}

For $(F,p)\in\cE^\mathrm{dis}_T(\Rho)$ and $(G,q)\in\cE_T^\mathrm{dis}(\Rho)_\mathrm{lft}$, let then $\cF\in\cS^\mathrm{dis}_T(\Rho)$ 
and $\cG\in\cS_T^\mathrm{dis}(\Rho)$ be the respective sheaves of sections. Then, $\cF$ is the colimit of a filtered diagram 
$\{\cF_\alpha\}_{\alpha\in\Alpha}$ in $\cS^\mathrm{fin}_T(\Rho)$ and there holds:
\[\chom_{\ul{\Rho}}(\cF,\cG)\cong\lim_{\alpha\in\Alpha}\chom_{\ul{\Rho}}(\cF_\alpha,\cG).\]

Since  $\cF_\alpha$ is locally constant, for all $\alpha\in\Alpha$, there is an $n_\alpha\in\N^+$ which annihilates $\cF_\alpha$.
Every homomorphism from $\cF_\alpha$ to $\cG$ then factors through the kernel of the morphism $n_\alpha\cdot\co\cG\to\cG$
so that $\chom_{\ul{\Rho}}(\cF_\alpha,\cG)=\chom_{\ul{\Rho}}(\cF_\alpha,\ker(n_\alpha\cdot))$. Now, $\ker(n_\alpha\cdot)$ is a compact subsheaf
of $\cG$ and hence, by Lemma~\ref{compactsubsheaf}, it is locally constant. By Lemma~\ref{locallyconstanthom},
this implies that $\chom_{\ul{\Rho}}(\cF_\alpha,\ker(n_\alpha\cdot))\in\cS^\mathrm{fin}_T(\ZZ)$, 
for all $\alpha\in\Alpha$, and then that:
\[\chom_{\ul{\Rho}}(\cF,\cG)\cong\lim_{\alpha\in\Alpha}\chom_{\ul{\Rho}}(\cF_\alpha,\ker(n_\alpha\cdot))\in\cS^\mathrm{pro}_T(\ZZ).\]

\begin{definition}\label{homdiscdisc}For $(F,p)\in\cE^\mathrm{dis}_T(\Rho)$ and $(G,q)\in\cE_T^\mathrm{dis}(\Rho)_\mathrm{lft}$,
let $(\chom_\Rho(F,G),\pi)\in\cE_T^\mathrm{pro}(\ZZ)$ be the \'etal\'e space associated to the sheaf $\chom_{\ul{\Rho}}(\cF,\cG)$
by the splitting of the functor $\Theta_T$ (cf.\ the proof of Theorem~\ref{prosheavesproetale}).
\end{definition}

\begin{remark}Note that, from Lemma~\ref{stalkspro}, Lemma~\ref{locallyconstanthom} and the above definition, it follows that, 
if the sheaf $\cG$ associated to $(G,q)\in\cE_T^\mathrm{dis}(\Rho)_\mathrm{lft}$ is an injective object in the category 
$\cS_T^\mathrm{dis}(\Rho)$, for all $(F,p)\in\cE^\mathrm{dis}_T(\Rho)$, there holds:
\begin{align*}\chom_\Rho(F,G)_t&=\lim_{\alpha\in\Alpha}\chom_\Rho(F_\alpha,G)_t=
\lim_{\alpha\in\Alpha}\chom_{\ul{\Rho}}(\cF_\alpha,\cG)_t=\\
&=\lim_{\alpha\in\Alpha}\hom_\Rho((\cF_\alpha)_t,\cG_t)=
\hom_\Rho(\cF_t,\cG_t)=\hom_\Rho(F_t,G_t).
\end{align*}
\end{remark}

In this way, we have defined a bifunctor:
\[\chom_\Rho(\_\,,\_)\co\cE^\mathrm{dis}_T(\Rho)^\mathrm{op}\times\cE^\mathrm{dis}_T(\Rho)_\mathrm{lft}\to\cE^\mathrm{pro}_T(\ZZ).\]

\begin{remark}\label{sections}For an \'etal\'e space $(F,\pi)$ of profinite or discrete $\Rho$-modules over $T$, let us denote by $F(T)$ 
the set of continuous sections of the natural map $\pi\co F\to T$. This has a natural structure of (topological) $\ZZ$-module.
For the \'etal\'e space $(\chom_\Rho(F,G),\pi)$ defined above, there is then a series of natural (continuous) isomorphisms:
\[\chom_\Rho(F,G)(T)\cong\chom_{\ul{\Rho}}(\cF,\cG)(T)\cong\hom_\Rho(\cF,\cG)\cong\hom_\Rho(F,G).\]
\end{remark}

\subsection{The completed Tensor functor}
For $(F,p)\in\cE^\mathrm{pro}_T(\Rho)$ and $(G,q)\in\cE^\mathrm{pro}_T(\ZZ)$, let us  define
a new \'etal\'e space $(F\wh{\otimes}G,\pi)\in\cE_T^\mathrm{pro}(\Rho)$ with the property that $(F\wh{\otimes}G)_t=F_t\wh{\otimes}G_t$, 
for all $t\in T$, where the right hand side is the completed tensor product of profinite modules (cf.\ for instance, \cite[Section~5.5]{RZ}).

Let $\cF\in\cS^\mathrm{pro}_T(\Rho)$ and $\cG\in\cS_T^\mathrm{pro}(\ZZ)$ be the sheaves of sections of $(F,p)$ and $(G,q)$.
Then, $\cF$ and $\cG$ are cofiltered limits of diagrams $\{\cF_\alpha\}_{\alpha\in\Alpha}$ and $\{\cG_\beta\}_{\beta\in\Beta}$ in 
$\cS^\mathrm{fin}_T(\Rho)$ and $\cS_T^\mathrm{fin}(\ZZ)$, respectively. For all $\alpha\in\Alpha$ and $\beta\in\Beta$, the sheaf 
$\cF_\alpha\otimes\cG_\beta$ is locally constant 
(cf.\ \cite[\href{https://stacks.math.columbia.edu/tag/093P}{Tag 093P}, Lemma~18.43.6]{stacks-project}). We then let:
\[\cF\wh{\otimes}\cG:=\lim_{\alpha\in\Alpha}\lim_{\beta\in\Beta}(\cF_\alpha\otimes\cG_\beta)\in\cS^\mathrm{pro}_T(\Rho).\]

\begin{definition}\label{tensprof}For $(F,p)\in\cE^\mathrm{pro}_T(\Rho)$ and $(G,q)\in\cE_T^\mathrm{pro}(\ZZ)$, let
$(F\wh{\otimes}G,\pi)\in\cE_T^\mathrm{pro}(\Rho)$ be the \'etal\'e space associated to the sheaf $\cF\wh{\otimes}\cG$
by the splitting of the functor $\Theta_T$ (cf.\ the proof of Theorem~\ref{prosheavesproetale}).
\end{definition}

It is easy to check that $(\cF_\alpha\wh{\otimes}\cG_\beta)_t=(\cF_\alpha\otimes\cG_\beta)_t=(\cF_\alpha)_t\otimes(\cG_\beta)_t$ and then 
$(F_\alpha\wh{\otimes} G_\beta)_t=(F_\alpha)_t\otimes(G_\beta)_t$, for all $t\in T$ and $\alpha\in\Alpha$, $\beta\in\Beta$. 
By Lemma~\ref{compactsubquotient}, we can also assume that the cofiltered diagrams $\{\cF_\alpha\}_{\alpha\in\Alpha}$ and 
$\{\cG_\beta\}_{\beta\in\Beta}$ have surjective transition maps. Since the tensor functor is right exact, this implies that the
cofiltered diagram $\{F_\alpha\wh{\otimes}G_\beta\}$ has  surjective transition maps. We then have that 
$F\wh{\otimes}G=\lim_{\alpha\in\Alpha}\lim_{\beta\in\Beta}F_\alpha\wh{\otimes}G_\beta$ and so, for all $t\in T$, there holds:
\begin{equation}\label{tensorfiber}
 (F\wh{\otimes}G)_t=\lim_{\alpha\in\Alpha}\lim_{\beta\in\Beta}(F_\alpha\wh{\otimes}G_\beta)_t=
 \lim_{\alpha\in\Alpha}\lim_{\beta\in\Beta}(F_\alpha)_t\otimes(G_\beta)_t=F_t\wh{\otimes}G_t.
 \end{equation}

In this way, we have defined a bifunctor with the desired properties:
\[(\_\,\wh{\otimes}\_)\co\cE^\mathrm{pro}_T(\Rho)\times\cE_T^\mathrm{pro}(\ZZ)\to\cE^\mathrm{pro}_T(\Rho).\]

\subsection{Tensor-Hom adjunction}
\begin{theorem}\label{adjunctiso}Let $(F,p)\in\cE^\mathrm{pro}_T(\Rho)$ and $(G,q)\in\cE_T^\mathrm{pro}(\ZZ)$ 
and $(L,\pi,T)\in\cE^\mathrm{dis}_T(\Rho)$.
\begin{enumerate}
\item There is a natural isomorphism of \'etal\'e spaces of discrete $\ZZ$-modules over $T$:
\[\chom_\Rho(F\wh{\otimes}G,L)\cong\chom_\ZZ(G,\chom_\Rho(F,L)).\]
\item There is a natural isomorphism of discrete $\ZZ$-modules:
\[\hom_\Rho(F\wh{\otimes}G,L)\cong\hom_\ZZ(G,\chom_\Rho(F,L)).\]
\end{enumerate}
\end{theorem}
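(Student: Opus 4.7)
The plan is to establish (i) by reducing the tensor–hom adjunction to the classical one for finite $\Rho$-modules and then extending it through the limit/colimit presentations of $F$, $G$, and $L$; statement (ii) will then follow from (i) by taking continuous global sections and invoking Remark~\ref{sections}.

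The first step is to present $F = \lim_{\alpha \in \Alpha} F_\alpha$ and $G = \lim_{\beta \in \Beta} G_\beta$ as cofiltered limits in $\cE^\mathrm{fin}_T(\Rho)$ and $\cE^\mathrm{fin}_T(\ZZ)$ respectively, with surjective transition maps (Lemma~\ref{compactsubquotient}), and $L = \colim_{\gamma \in \Gamma} L_\gamma$ as a filtered colimit in $\cE^\mathrm{fin}_T(\Rho)$ along injective transition maps (Corollary~\ref{completion}). By Definition~\ref{tensprof}, $F \wh{\otimes} G = \lim_{(\alpha,\beta)} F_\alpha \otimes G_\beta$. Applying the colimit formula for $\chom$ derived in the paragraph preceding Definition~\ref{homprodis}, one obtains
\[
\chom_\Rho(F \wh{\otimes} G, L) \cong \colim_{\gamma} \colim_{(\alpha,\beta)} \chom_\Rho(F_\alpha \otimes G_\beta, L_\gamma),
\]
and, using $\chom_\Rho(F, L) \cong \colim_{(\alpha,\gamma)} \chom_\Rho(F_\alpha, L_\gamma)$ together with the same formula applied to $G$,
\[
\chom_\ZZ(G, \chom_\Rho(F, L)) \cong \colim_{(\alpha,\gamma)} \colim_\beta \chom_\ZZ(G_\beta, \chom_\Rho(F_\alpha, L_\gamma)),
\]
both identifications taking place in $\cS^\mathrm{dis}_T(\ZZ)$.

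The second step is to establish the isomorphism at the finite level: for each triple $(\alpha,\beta,\gamma)$, a natural isomorphism
\[
\chom_\Rho(F_\alpha \otimes G_\beta, L_\gamma) \cong \chom_\ZZ(G_\beta, \chom_\Rho(F_\alpha, L_\gamma))
\]
in $\cS^\mathrm{fin}_T(\ZZ)$. Covering $T$ by clopen subsets $U$ on which $F_\alpha$, $G_\beta$, $L_\gamma$ are simultaneously constant with fibers $M$, $N$, $P$, both sides restrict to the constant sheaf attached to the classical tensor–hom adjunction $\hom_\Rho(M \otimes N, P) \cong \hom_\ZZ(N, \hom_\Rho(M, P))$; naturality of this classical isomorphism in all three variables guarantees compatibility on overlaps and under refinements of the cover, producing a global sheaf isomorphism that is natural in $F_\alpha$, $G_\beta$ and $L_\gamma$ simultaneously. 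Passing to colimits in $\alpha$, $\beta$ and $\gamma$ then yields (i), and the resulting map can be cross-verified on stalks via \eqref{stalkhom}, \eqref{stalkhom2} and \eqref{tensorfiber}, which at each $t \in T$ reduces the pointwise claim to the standard tensor–hom adjunction between profinite and discrete torsion $\Rho$-modules. Statement (ii) is then obtained by applying the global sections functor $(\cdot)(T)$ to (i) and invoking Remark~\ref{sections}, which identifies $\chom_\Rho(\cdot,\cdot)(T)$ with $\hom_\Rho(\cdot,\cdot)$.

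The principal obstacle will be the bookkeeping of the nested limits and colimits, most notably the fact that $\chom_\ZZ(G, \chom_\Rho(F, L))$ arises as an iterated colimit whose index structure $\colim_{(\alpha,\gamma)}\colim_\beta$ must be reorganized to match the single-step presentation $\colim_\gamma\colim_{(\alpha,\beta)}$ of $\chom_\Rho(F \wh\otimes G, L)$; the interchange is legitimate because filtered colimits commute with filtered colimits and the finite adjunction is natural in all three indices simultaneously, but it needs to be spelled out with care to verify that the resulting morphism of \'etal\'e spaces agrees with the map predicted by the universal properties of $\chom$ and $\wh\otimes$.
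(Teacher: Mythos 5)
Your proof is correct, but it takes a genuinely different route from the paper's. The paper proves (i) in a single stroke: for each open $U\subseteq T$ it applies the tensor--hom adjunction for profinite and discrete torsion modules (the adjunction of \cite[(c) of Proposition~5.5.4]{RZ}) to the modules of sections $\cF(U)$, $\cG(U)$, $\cL(U)$, and then observes that the resulting compatible family $\{\Phi_U\}$ is an isomorphism of presheaves and hence of the associated sheaves; (ii) then follows from (i) and Remark~\ref{sections}, exactly as in your last step. Your argument instead never invokes the profinite adjunction: it reduces everything to the classical adjunction for \emph{finite} modules on a trivializing clopen cover and then propagates the isomorphism through the limit/colimit presentations, using the $\colim\colim$ formula for $\chom$ established before Definition~\ref{homprodis}. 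This costs you the nested-index bookkeeping you flag at the end (plus one small point you should make explicit: to apply that formula to $F\wh{\otimes}G=\lim_{(\alpha,\beta)}F_\alpha\otimes G_\beta$ you need the transition maps of this diagram to be surjective, which follows from right-exactness of the tensor product checked on stalks), but it buys a proof that is self-contained at the level of finite locally constant sheaves, sidesteps the paper's slightly delicate identification of $(\cF\wh{\otimes}\cG)(U)$ with the sheafification of $U\mapsto\cF(U)\wh{\otimes}\cG(U)$, and comes with a built-in stalkwise verification via \eqref{stalkhom}, \eqref{stalkhom2} and \eqref{tensorfiber}.
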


\begin{proof}(i): Let $\cF\in\cS^\mathrm{pro}_T(\Rho)$, $\cG\in\cS_T^\mathrm{pro}(\ZZ)$ and $\cL\in\cS_T^\mathrm{dis}(\ZZ)$ be the sheaves 
of sections of $(F,p)$, $(G,q)$ and $(L,\pi)$, respectively. The claim of the theorem is then equivalent to the claim that there is a
natural isomorphism of sheaves of discrete $\ZZ$-modules over $T$:
\[\chom_{\ul{\Rho}}(\cF\wh{\otimes}\cG,\cL)\cong\chom_{\ul{\ZZ}}(\cG,\chom_{\ul{\Rho}}(\cF,\cL)).\]

For $U$ an open subset of $T$, the map: 
\[\Phi_U\co\hom_\Rho(\cF(U)\wh{\otimes}\cG(U),\cL(U))\to\hom_\ZZ(\cG(U),\chom_\Rho(\cF(U),\cL(U))),\]
which sends $f\co\cF(U)\wh{\otimes}\cG(U)\to\cL(U)$ to the map $\Phi_U(f)\co\cG(U)\to\chom_\Rho(\cF(U),\cL(U))$, 
defined by the assignment
$x\in\cG(U)\mapsto[y\in\cF(U)\mapsto f(x\wh{\scriptstyle\otimes}y)\in\cL(U)]$,
is an isomorphism by the standard adjunction (cf.\ \cite[(c) of Proposition~5.5.4]{RZ}).
The collection of maps $\{\Phi_U\}$ then determines an isomorphism of the corresponding presheaves
and so of the associated sheaves.
\smallskip

\noindent
(ii): This follows from item (i) and Remark~\ref{sections}.
\end{proof}

\subsection{Trivial \'etal\'e spaces}\label{trivial}
We associate to a profinite (resp.\ discrete) $\Rho$-module $M$ the \emph{trivial \'etal\'e space of $\Rho$-modules} 
$(M_T,p)$, where $M_T:=M\times T$ and $p$ is the projection onto the second factor. It is clear that this is an \'etal\'e
space of profinite (resp.\ discrete) $\Rho$-modules. This assignment defines exact functors:
 \[\times T\co\Rho\text{-}\mod^\mathrm{pro}\to\cE^\mathrm{pro}_T(\Rho)\hspace{0.4cm}\mbox{and}\hspace{0.4cm}
 \times T\co\Rho\text{-}\mod^\mathrm{dis}\to\cE^\mathrm{dis}_T(\Rho).\]
 The sheaf of continuous sections of the trivial \'etal\'e space $(M_T,p)$ is the constant sheaf $\ul{M}_T$ on $T$ associated to $M$.

\subsection{Cohomology of a profinite group with coefficients in a pro\'etal\'e space of torsion discrete modules}
Let $\Rho$ be one of the profinite rings $\ZZ$, $\Z_p$ or $\F_p$. In this section, we are going to define the cohomology of a profinite group 
$G$ with coefficients in an \'etal\'e space of discrete $\Rho[[G]]$-modules over a profinite set $T$. 

\begin{definition}\label{cohomology}Let $\{(F^\bullet,\pi^\bullet),d^\bullet\}$ be a cochain complex in the abelian category
$\cE^\mathrm{dis}_T(\Rho)$. We then denote by $(\cH^k(F^\bullet),\bar\pi^k)$ the $k$-th cohomology \'etal\'e space of this 
cochain complex which is indeed an \'etal\'e space of discrete $\Rho$-modules.
\end{definition}

For a profinite group $G$, let $\{\cB_i(G),\partial_i\}_{i\geq 0}$ be its associated \emph{homogenous bar resolution} (cf.\ \cite[Section~6.2]{RZ}).
This is a projective resolution of the trivial $\Rho[[G]]$-module $\Rho$ consisting of free profinite $\Rho$-modules.
In particular, for a profinite space $T$, the complex of trivial \'etal\'e spaces $\{(\cB_i(G)_T,p_i),(\partial_i)_T\}_{i\geq 0}$ over $T$ is a resolution
in the abelian category $\cE^\mathrm{pro}_T(\Rho[[G]])$ of the trivial \'etal\'e space $\Rho_T$ over $T$ (cf.\ Section~\ref{trivial}).
For $(F,p)\in\cE^\mathrm{dis}_T(\Rho[[G]])$, by applying the left exact functor
$\chom_{\Rho[[G]]}(\_,F)$ to this resolution, we then get a cochain complex: 
\[\{(\chom_{\Rho[[G]]}(\cB_\bt(G)_T,F),\pi^\bullet),(\partial_\bt)_T^\ast\}\]
in the category $\cE^\mathrm{dis}_T(\ZZ)$ and we define, for $k\geq 0$:
\[(\cH^k(G;F),\bar\pi^k_\ast):=(\cH^k(\chom_{\Rho[[G]]}(\cB_\bt(G)_T,F),\bar\pi^k)\in\cE^\mathrm{dis}_T(\ZZ).\]
By the isomorphism~\eqref{stalkhom}, for all $t\in T$, there is a natural isomorphism: 
\begin{equation}\label{stalkcohomology}
\cH^k(G;F)_t\cong H^k(G;F_t).
\end{equation}
In particular, $\cH^0(G;F)$ identifies with the fixed-point subspace $F^G$ of $F$.

\section{Products and coproducts of \'etal\'e spaces of modules}\label{directsumprod}
Coproducts of families of profinite groups indexed by profinite spaces were introduced in \cite{GR}, \cite{Haran} and \cite{Mel}.
A standard reference on the subject is \cite[Chaper~V]{R}. Here, we will first consider the notion of product of a family 
of discrete torsion modules indexed by a profinite set, which is simpler and more natural
from our point of view, and then obtain all the results about coproducts in  \cite{Haran} and \cite{Mel} by Pontryagin duality.

\subsection{The product functor}\label{defprodfunctor}
It will be useful to define products on the larger category of \'etal\'e spaces of all discrete $\Rho$-modules $\Et_T^\mathrm{dis}(\Rho)$
introduced in Section~\ref{alldiscrete}. 

The \emph{product functor}:
\[\prod_T\co\Et_T^\mathrm{dis}(\Rho)\to\Rho\text{-}\mod^\mathrm{dis}\]
is the right adjoint to the functor $\times T$, that is to say, it is characterized by the property that, for every \'etal\'e space in 
$\Et_T^\mathrm{dis}(\Rho)$ and every torsion discrete $\Rho$-module $A$, there holds:
\[\hom_{\Et_T^\mathrm{dis}(\Rho)}(A_T,F)\cong\hom_{\Rho\text{-}\mod^\mathrm{dis}}(A,\prod_TF).\]

The existence of the product functor $\prod_T$ follows from the Special Adjoint Functor Theorem
(cf.\ \cite[Corollary to Theorem~2, Section~8, Ch.~V]{MacLane} applied to the opposite categories). 
The category $\Rho\text{-}\mod^\mathrm{dis}$ is in fact small cocomplete, well-powered, with small hom-sets and a small 
generating set (e.g.\ a set of representatives for finite $\Rho$-modules),
while the category $\Et_T^\mathrm{dis}(\Rho)$ has small hom-sets and the functor $\times T$ preserve small colimits.

For an \'etal\'e space $(F,\pi)\in\Et_T^\mathrm{dis}(\Rho)$ the set of continuous sections $F(T)$ of the map $\pi\co F\to T$ has a natural
structure of torsion discrete $\Rho$-module.
We then have the following simple characterization of products of \'etal\'e spaces in $\Et_T^\mathrm{dis}(\Rho)$, which also prove the existence 
of products without resorting to  the Special Adjoint Functor Theorem:

\begin{proposition}\label{H^0}For all $(F,\pi)\in\Et_T^\mathrm{dis}(\Rho)$, there is a natural isomorphism of discrete $\Rho$-modules $\prod_T F\cong F(T)$.
\end{proposition}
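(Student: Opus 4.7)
The plan is to verify directly that $F(T)$, with its natural $\Rho$-module structure, represents the functor $A\mapsto\hom_{\Et_T^\mathrm{dis}(\Rho)}(A_T,F)$ on $\Rho\text{-}\mod^\mathrm{dis}$; since $\prod_T$ is defined as the right adjoint to $\times T$, uniqueness of adjoints (Yoneda) will then yield the natural isomorphism $\prod_TF\cong F(T)$.

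The fact that $F(T)$ is itself a discrete torsion $\Rho$-module is essentially tautological: if $\cF\in\Sh_T^\mathrm{dis}(\Rho)$ denotes the sheaf of continuous sections of $\pi\co F\to T$, then $F(T)=\cF(T)$, and by definition a sheaf in $\Sh_T^\mathrm{dis}(\Rho)$ takes values in $\Rho\text{-}\mod^\mathrm{dis}$.

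For the adjunction bijection, I would define
\[\Theta_A\co\hom_{\Et_T^\mathrm{dis}(\Rho)}(A_T,F)\longrightarrow\hom_{\Rho\text{-}\mod^\mathrm{dis}}(A,F(T))\]
by sending a morphism of \'etal\'e spaces $\phi\co A_T\to F$ to the map $\tilde\phi\co a\mapsto\phi(a,\_)$; each $\tilde\phi(a)$ is a continuous section obtained by restricting $\phi$ to the clopen piece $\{a\}\times T\subseteq A_T$, and $\tilde\phi$ is $\Rho$-linear by fiberwise linearity of $\phi$. The inverse sends $\psi\co A\to F(T)$ to $\hat\psi\co A_T\to F$, $(a,t)\mapsto\psi(a)(t)$. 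Continuity of $\hat\psi$ follows from the decomposition $A_T=\coprod_{a\in A}\{a\}\times T$ (valid because $A$ carries the discrete topology) together with the fact that $\hat\psi|_{\{a\}\times T}=\psi(a)$ is a continuous section. A direct check then shows these constructions are mutually inverse, natural in $A$ and $F$, and carry the $\Rho$-module structures across; by Yoneda we conclude $F(T)\cong\prod_TF$.

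The only non-trivial point, and essentially the main obstacle, is the continuity of $\hat\psi$; this is where $A$ being discrete enters crucially, since it is precisely the topological decomposition $A_T=\coprod_{a\in A}\{a\}\times T$ that lets us describe $\hat\psi$ fiber-by-fiber. Without this, one would be forced to equip $F(T)$ with some non-discrete topology for the adjunction to make sense.
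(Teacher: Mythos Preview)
Your proof is correct. Both your argument and the paper's establish that $F\mapsto F(T)$ is right adjoint to $\times T$, and then conclude by uniqueness of adjoints. The difference is in how the adjunction bijection is obtained: the paper recognizes the two functors as instances of sheaf-theoretic pullback and pushforward along the map $p\co T\to\{\ast\}$, namely $\ul{A}_T\cong p^{-1}(\ul{A}_{\{\ast\}})$ and $F(T)=p_\ast(\cF)(\{\ast\})$, so that the required bijection is just the standard adjunction $p^{-1}\dashv p_\ast$. You instead build the bijection by hand, exploiting the topological decomposition $A_T=\coprod_{a\in A}\{a\}\times T$ afforded by the discreteness of $A$. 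The paper's route is shorter and places the statement in a familiar sheaf-theoretic framework; yours is more elementary and self-contained, and makes explicit exactly where the discreteness hypothesis on $A$ is used.
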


\begin{proof}It is enough to show that the functor $H^0(T,\_)\co\Et_T^\mathrm{dis}(\Rho)\to\Rho\text{-}\mod^\mathrm{dis}$, defined by 
the assignment $F\mapsto F(T)$, is also a right adjoint to the functor $\times T$. Let $\cF\in\Sh_T^\mathrm{dis}(\Rho)$ be the sheaf of sections 
of the \'etal\'e space $(F,\pi)$, let $p\co T\to\{\ast\}$ be the map to the one-point space. For a torsion discrete $\Rho$-module $A$, 
the pullback sheaf $p^{-1}(\ul{A}_{\{\ast\}})$ is naturally isomorphic to the constant sheaf $\ul{A}_T$ on $T$ and, if $p_\ast(\cF)$ is 
the pushforward of a sheaf $\cF$ to the one-point space $\{\ast\}$, we have $F(T)=\cF(T)=p_\ast(\cF)(\{\ast\})$. 
By the adjunction isomorphism between pullback and pushforward of sheaves 
(cf.\ \cite[Proposition~3.49]{Wedhorn}), there is then a series of natural isomorphisms:

\[\begin{split}\hom_{\Et_T^\mathrm{dis}(\Rho)}(A_T,F)&\cong\hom_{\Sh_T(\Rho)}(\ul{A}_T,\cF)=\hom_{\Sh_T(\Rho)}(p^{-1}(\ul{A}_{\{\ast\}}),\cF)\cong\\
&\cong\hom_{\Sh_{\{\ast\}}(\Rho)}(\ul{A}_{\{\ast\}},p_\ast(\cF))=\hom_{\Rho\text{-}\mod^\mathrm{dis}}(A,F(T)).
\end{split}\]
\end{proof}

\begin{remark}\label{finiteprod}In particular, from Proposition~\ref{H^0}, it follows that, for $T$ a finite discrete set and $(F,\pi)\in\Et_T^\mathrm{dis}(\Rho)$, 
the product $\prod_T F$ coincides with the ordinary direct product of the $\Rho$-modules in the fibers of $(F,\pi)$.
\end{remark}

An important property of sheaves on profinite spaces is that they are all soft (cf.\ \cite[Lemma~3.3]{Pierce}). Since profinite spaces are compact
and Hausdorff, these sheaves are also acyclic (cf.\ \cite[Theorem~9.14]{Wedhorn}). From Proposition~\ref{H^0}, it then immediately follows:

\begin{corollary}\label{prodexact}The functor $\prod_T\co\Et_T^\mathrm{dis}(\Rho)\to\Rho\text{-}\mod^\mathrm{dis}$ is exact.
\end{corollary}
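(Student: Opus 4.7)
The plan is to reduce the statement to surjectivity of the global sections functor on short exact sequences, and then invoke the clopen structure of $T$. First I would apply Proposition~\ref{H^0} to identify $\prod_T$ with the continuous-sections functor $F\mapsto F(T)$ on $\Et_T^\mathrm{dis}(\Rho)$. Since $\prod_T$ is the right adjoint of $\times T$, it automatically preserves all small limits; in particular it is left exact, sending kernels to kernels. Thus exactness reduces to right exactness, i.e.\ to the claim that for every short exact sequence $0\to\cF'\to\cF\to\cF''\to 0$ in $\Sh^\mathrm{dis}_T(\Rho)$ the induced map $\cF(T)\to\cF''(T)$ is surjective.

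For surjectivity, I would argue directly from the base of clopens on a profinite space. Fix $s''\in\cF''(T)$. Since the stalk maps $\cF_t\to\cF''_t$ are surjective, for every $t\in T$ there is an open neighborhood on which $s''$ lifts to a section of $\cF$; using the clopen base of $T$, I may shrink this neighborhood to a clopen $U_t\ni t$ with a lift $s_t\in\cF(U_t)$ of $s''|_{U_t}$. By compactness of $T$ finitely many $U_{t_1},\dots,U_{t_n}$ cover $T$, and by the standard partitioning trick (subtract successive intersections) I can refine them to a \emph{disjoint} finite clopen partition $T=V_1\sqcup\cdots\sqcup V_n$ with each $V_i\subseteq U_{t_i}$ carrying the restricted lift $s_{t_i}|_{V_i}\in\cF(V_i)$. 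Because the $V_i$ are pairwise disjoint the sheaf compatibility condition on overlaps is vacuous, so the local lifts glue to a global section $s\in\cF(T)$ mapping to $s''$.

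There is no real obstacle: the only nontrivial content is the compactness-plus-clopen-base step, which is precisely why the argument fails for general topological spaces and succeeds here. This direct argument is of course just the degree-one unfolding of the cited facts (every sheaf on a profinite space is soft, by \cite[Lemma~3.3]{Pierce}, and soft sheaves on compact Hausdorff spaces are acyclic, by \cite[Theorem~9.14]{Wedhorn}); if one prefers, one can simply quote those results after the reduction via Proposition~\ref{H^0} and be done.
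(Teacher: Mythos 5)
Your proof is correct and follows the paper's own route: both reduce via Proposition~\ref{H^0} to exactness of the global-sections functor on $\Et_T^\mathrm{dis}(\Rho)$, which the paper obtains by citing that all sheaves on a profinite space are soft (\cite{Pierce}) and that soft sheaves on compact Hausdorff spaces are acyclic (\cite{Wedhorn}). Your direct argument for the surjectivity step --- lifting $s''$ locally, refining by compactness to a finite \emph{disjoint} clopen partition, and gluing with vacuous overlap conditions --- is a valid elementary unfolding of that citation, as you note yourself.
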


For a profinite space $T$ and a torsion discrete $\Rho$-module $A$, let $A^T:=\cC(T,A)$ be the set of continuous maps from $T$ to $A$
with the obvious torsion discrete $\Rho$-module structure, which we call the \emph{free product of $T$ copies of $A$}.
An immediate consequence of Proposition~\ref{H^0} is then the following explicit description of the product $\prod_T A_T$ 
of the trivial \'etal\'e space over $T$ with coefficients in a torsion discrete $\Rho$-module $A$:

\begin{corollary}\label{freeproddescr}There is a natural isomorphism of discrete $\Rho$-modules:
\[\prod_T A_T\cong A^T.\]
\end{corollary}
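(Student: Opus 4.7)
My plan is to deduce the corollary directly from Proposition~\ref{H^0}, which identifies $\prod_T F$ with the module $F(T)$ of continuous sections of the \'etal\'e projection $\pi\co F\to T$. So the whole content of the corollary reduces to computing $F(T)$ in the special case $F=A_T=A\times T$ with $\pi$ the projection onto the second factor.

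First, I would unwind what a section of $p\co A\times T\to T$ is. Any set-theoretic section $s\co T\to A\times T$ of $p$ has the form $s(t)=(f(t),t)$ for a unique function $f\co T\to A$, and conversely any function $f\co T\to A$ determines such a section. The assignment $s\mapsto f$ is therefore a bijection between all sections and all maps $T\to A$, and it is plainly $\Rho$-linear because the $\Rho$-module structure on $A_T$ is fiberwise, coming from the constant copy of $A$ in each fiber.

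Next I would check that continuity of $s$ is equivalent to continuity of $f$. Since $A$ is discrete and $T$ is profinite, the product topology on $A\times T$ gives it the structure of the trivial \'etal\'e space on $T$, as observed in Section~\ref{trivial}; the projection $p$ is a local homeomorphism and local sections near a fixed $a\in A$ are exactly the maps $t\mapsto(a,t)$ defined on clopen subsets of $T$. Hence $s(t)=(f(t),t)$ is continuous iff its first coordinate $f$ is continuous into the discrete space $A$, which is exactly the condition $f\in\cC(T,A)=A^T$. Composing with the isomorphism $\prod_T A_T\cong A_T(T)$ from Proposition~\ref{H^0} yields the required natural isomorphism $\prod_T A_T\cong A^T$ of discrete $\Rho$-modules.

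There is essentially no obstacle here: the corollary is a routine specialization of Proposition~\ref{H^0}, and the only thing to take care of is that the trivial \'etal\'e structure on $A\times T$ really does make continuous sections coincide with the continuous maps $T\to A$, which is immediate from the product-topology description.
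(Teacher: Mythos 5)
Your proof is correct and takes essentially the same route as the paper: both reduce the statement to Proposition~\ref{H^0} and then identify continuous sections of the projection $A\times T\to T$ with continuous maps $T\to A$. You simply spell out the bijection, the continuity check, and the $\Rho$-linearity in more detail than the paper's one-line argument.
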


\begin{proof}The set of continuous sections of the projection $p_2\co A\times T\to T$ clearly identifies with the set of continuous maps 
from $T$ to $A$.
\end{proof}

Another interesting consequence of Proposition~\ref{H^0} is:

\begin{corollary}\label{proddecomposition}Let $(F,\pi)\in\Et_T^\mathrm{dis}(\Rho)$ and let $T=\coprod_{i=1}^kU_i$ be a decomposition 
into disjoint clopen subsets. There is then a natural isomorphism:
\[\prod_T F=\prod_{i=1}^kF(U_i)=\prod_{i=1}^k\prod_{U_i}F|_{U_i}.\]
\end{corollary}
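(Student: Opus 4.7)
The plan is to reduce everything to the description of products as continuous sections given by Proposition~\ref{H^0}, and then to exploit the fact that the cover $\{U_i\}_{i=1}^k$ consists of pairwise disjoint clopen sets, which makes the sheaf equalizer condition collapse to a plain product.

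First I would invoke Proposition~\ref{H^0} to identify $\prod_TF$ with the module of continuous sections $F(T)=\cF(T)$, where $\cF\in\Sh_T^\mathrm{dis}(\Rho)$ denotes the sheaf of sections of $(F,\pi)$. Next, applying the sheaf axiom to the open cover $T=\coprod_{i=1}^kU_i$ yields an equalizer diagram
\[\cF(T)\to\prod_{i=1}^k\cF(U_i)\rightrightarrows\prod_{i,j=1}^k\cF(U_i\cap U_j).\]
Since $U_i\cap U_j=\emptyset$ for $i\neq j$ and $\cF(\emptyset)=0$, the two parallel maps on the right agree (both project onto the diagonal factors $\cF(U_i)$), so the equalizer is simply the product; this gives the natural isomorphism $F(T)\cong\prod_{i=1}^kF(U_i)$.

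Finally, since each $U_i$ is itself clopen (hence a profinite space in its own right) and $(F|_{U_i},\pi|_{U_i})\in\Et_{U_i}^\mathrm{dis}(\Rho)$, a second application of Proposition~\ref{H^0} (now over the base $U_i$ instead of $T$) identifies $F(U_i)$ with $\prod_{U_i}F|_{U_i}$. Chaining these natural isomorphisms produces
\[\prod_TF\cong\prod_{i=1}^kF(U_i)\cong\prod_{i=1}^k\prod_{U_i}F|_{U_i},\]
which is the claim.

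There is no serious obstacle: the only point to verify carefully is that the gluing step really produces the ordinary direct product, but this is immediate from the disjointness of the $U_i$, which trivialises the cocycle condition in the sheaf equalizer. Naturality in $F$ follows from the naturality of the isomorphism in Proposition~\ref{H^0} and the functoriality of restriction.
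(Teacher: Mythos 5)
Your proof is correct and takes essentially the same route the paper intends: the corollary is stated there as a direct consequence of Proposition~\ref{H^0}, and your reduction to continuous sections followed by the collapse of the sheaf equalizer over a pairwise disjoint clopen cover (with a second application of Proposition~\ref{H^0} over each $U_i$) is exactly the expected argument. Nothing is missing.
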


 \subsection{Skyscraper \'etal\'e spaces}\label{skysheaves}
For $A\in\Rho\text{-}\mod^\mathrm{dis}$ and a point $x\in T$, let $skysc_x(A)\in\Sh_T^\mathrm{dis}(\Rho)$ be the skyscraper sheaf supported 
at $x$. Let us recall that, if $i\co\{x\}\hookra T$ is the natural inclusion and $A_{\{x\}}$ is the constant sheaf on $\{x\}$, we have 
$skysc_x(A):=i_\ast(A_{\{x\}})$. The \emph{skyscraper \'etal\'e space $(\sky_x(A),p_x)$ supported at $x$} is then the \'etal\'e space in 
$\Et_T^\mathrm{dis}(\Rho)$ associated to the skyscraper sheaf $skysc_x(A)$. This assignment defines an exact functor:
 \[\sky_x\co\Rho\text{-}\mod^\mathrm{dis}\to\Et^\mathrm{dis}_T(\Rho).\]
 
 Note that, in general, by Proposition~\ref{locallycompact}, $(\sky_x(A),p_x)\notin\cE_T^\mathrm{dis}(\Rho)$, 
 since, unless $x$ is an isolated point of $T$ or the $\Rho$-module $A$ is trivial, the space $\sky_x(A)$ is not Hausdorff. 
 
 For a sheaf $\cF\in\Sh_T^\mathrm{dis}(\Rho)$ and a point $x\in T$, there is a natural morphism
 $sk_x\co\cF\to skysc_x(\cF_x)$. Let us denote by $\mathrm{sk}_x\co F\to \sky_x(F_x)$ the corresponding map of \'etal\'e spaces. 
 This map then induces the adjunction isomorphism, for $A\in\Rho\text{-}\mod^\mathrm{dis}$, 
 $(F,p)\in\Et^\mathrm{dis}(\Rho)$ and $x\in T$, between the fiber functor $F\mapsto F_x$ and the functor $A\mapsto\sky_x(A)$:
\[ \hom_\Rho(F_x,A)\cong\hom_{\Et_T^\mathrm{dis}(\Rho)}(F, \sky_x(A)).\]
 
 Let $\{M_t\}_{t\in S}$ be a family of torsion discrete $\Rho$-modules indexed by a finite subset $S$ of the profinite space $T$.
 We can then associate to this family the sheaf $\prod_{t\in S}skysc_t(M_t)\in\Sh_T^\mathrm{dis}(\Rho)$ and to this sheaf
the  \'etal\'e space $(\prod_{t\in S}\sky_t(M_t),p_S)\in\Et_T^\mathrm{dis}(\Rho)$. For a sheaf $\cF\in\Sh_T^\mathrm{dis}(\Rho)$,
there is a natural morphism $\mathrm{sk}_S\co F\to\prod_{t\in S}\sky_t(F_t)$ and then an isomorphism:
 \begin{equation}\label{adjskysum}
\prod_{t\in S} \hom_\Rho(F_t,M_t)\cong\hom_{\Et_T^\mathrm{dis}(\Rho)}(F,\prod_{t\in S}\sky_t(M_t)).
 \end{equation}

\begin{lemma}\label{skyprod}For $\{M_t\}_{t\in S}$ a family of torsion discrete $\Rho$-modules indexed by a finite subset $S$, 
there is a natural isomorphism $\prod_T(\prod_{t\in S}\sky_t(M_t))\cong\prod_{t\in S} M_t$.
\end{lemma}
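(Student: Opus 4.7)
The plan is to combine Proposition~\ref{H^0}, which identifies $\prod_T$ with the continuous global sections functor and displays it as the right adjoint to $\times T$, with the adjunction~\eqref{adjskysum} and the Yoneda lemma. The argument is a pure formal composition of two adjunctions, so I do not expect any genuine technical obstacle.

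More precisely, for an arbitrary $A\in\Rho\text{-}\mod^\mathrm{dis}$, I would compute the $\Rho$-module $\hom_\Rho(A,\prod_T(\prod_{t\in S}\sky_t(M_t)))$ in three steps. First, by the defining adjunction of $\prod_T$ (equivalently, by Proposition~\ref{H^0}), this equals $\hom_{\Et_T^\mathrm{dis}(\Rho)}(A_T,\prod_{t\in S}\sky_t(M_t))$. Second, by the isomorphism~\eqref{adjskysum} applied to the finite family $\{M_t\}_{t\in S}$, this equals $\prod_{t\in S}\hom_\Rho((A_T)_t,M_t)$. Third, since $A_T$ is the trivial \'etal\'e space (Section~\ref{trivial}), its stalk at every $t\in T$ is $A$, so the expression simplifies to $\prod_{t\in S}\hom_\Rho(A,M_t)\cong\hom_\Rho(A,\prod_{t\in S}M_t)$, using that $S$ is a finite index set so the product in $\Rho\text{-}\mod^\mathrm{dis}$ is the ordinary direct product and commutes with $\hom_\Rho(A,-)$.

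This chain of natural isomorphisms is functorial in $A$, so by the Yoneda lemma in the category $\Rho\text{-}\mod^\mathrm{dis}$ we obtain a canonical isomorphism $\prod_T(\prod_{t\in S}\sky_t(M_t))\cong\prod_{t\in S}M_t$, which is moreover natural in the family $\{M_t\}_{t\in S}$ because each of the three steps is natural in the $M_t$'s. The only point to verify carefully is that the adjunction~\eqref{adjskysum} applies with $F=A_T$; this is immediate from its statement, since~\eqref{adjskysum} is stated for an arbitrary $\cF\in\Sh_T^\mathrm{dis}(\Rho)$. Hence no step is delicate, and the lemma follows.
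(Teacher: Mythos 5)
Your proposal is correct and follows essentially the same route as the paper: the paper's proof likewise applies the adjunction~\eqref{adjskysum} with $F=A_T$, uses that $(A_T)_t=A$ together with the defining adjunction $\hom_{\Et_T^\mathrm{dis}(\Rho)}(A_T,\_)\cong\hom_{\Rho\text{-}\mod^\mathrm{dis}}(A,\prod_T\_)$, and concludes by the Yoneda lemma. Your write-up merely makes the Yoneda step and the identification of the stalks of $A_T$ explicit.
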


\begin{proof} By the isomorphism~\eqref{adjskysum}, for every torsion discrete $\Rho$-module $A$, we have:
\[\hom_{\Et_T^\mathrm{dis}(\Rho)}(A_T,\prod_{t\in S}\sky_t(M_t))\cong\prod_{t\in S}\hom_{\Rho\text{-}\mod^\mathrm{dis}}(A,M_t)
\cong\hom_{\Rho\text{-}\mod^\mathrm{dis}}(A,\prod_{t\in S}M_t),\]
which implies the lemma.
\end{proof}

\begin{remark}\label{cosky}For $A\in\Rho\text{-}\mod^\mathrm{pro}$ and a point $x\in T$, let the \emph{coskyscraper co\'etal\'e space}
$(\cosky_x(A),q_x)\in\CoEt_T^\mathrm{pro}(\Rho)$ be the Pontryagin dual (in the sense of \cite[Theorem~4.3]{Wilkes})
of the skyscraper \'etal\'e space $(\sky_x(A^\vee),p_x)\in\CoEt_T^\mathrm{pro}(\Rho)$ constructed above.
It is then easy to see that $\cosky_x(A)$ is obtained from the disjoint union of $T$ and $A$ by identifying the point 
$x\in T$ with the point $\{0\}\in A$. 

Note that, if $x\in T$ is not an isolated point, the sheaf of continuous sections of the natural
map $q_x\co\cosky_x(A)\to T$ only contains $0$-sections, so that $\mathrm{R}_T(\cosky_x(A),q_x)$ (cf.\ Section~\ref{coetale}) 
is the trivial pro\'etal\'e space $q_x\co\{0\}\times T\to T$. Consistently with the results of Section~\ref{alldiscrete},
there also holds $\Eta\sky_x(A^\vee)=\{0\}\times T$, so that we have indeed that $\mathrm{R}_T(\cosky_x(A))\cong\Eta\sky_x(A^\vee)^\vee$.
\end{remark}

\subsection{Properties of the product functor}
The canonical morphism $\omicron\co(\prod_TF)_T\to F$,  determined by the adjunction counit, induces, for all $t\in T$,
a canonical homomorphism of discrete $\Rho$-modules $\omicron_t\co\prod_TF\to F_t$. 

\begin{proposition}\label{prodprop}For $(F,p)\in\Et_T^\mathrm{dis}(\Rho)$, there holds:
\begin{enumerate}
\item For any finite subset $S$ of $T$, the natural homomorphism
$\prod_{t\in S}\omicron_t\co\prod_TF\to\prod_{t\in S} F_t$ is surjective.
\item There holds $\bigcap_{t\in T}\ker \omicron_t=\{0\}$, that is to say,
an element $x\in\prod_T F$ is uniquely determined by the family of elements $(\omicron_t(x))_{t\in T}$.
\end{enumerate}
\end{proposition}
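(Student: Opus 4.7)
My plan is to deduce (i) from the exactness of the product functor $\prod_T$ (Corollary~\ref{prodexact}) applied to a well-chosen epimorphism in $\Et_T^\mathrm{dis}(\Rho)$, and to deduce (ii) directly from the description $\prod_T F\cong F(T)$ given by Proposition~\ref{H^0}, under which $\omicron_t$ becomes the evaluation of a section at the point $t$.

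For part (i), the key object is the natural morphism $\mathrm{sk}_S\co F\to\prod_{t\in S}\sky_t(F_t)$ introduced in Section~\ref{skysheaves}. I would first observe that on stalks $\mathrm{sk}_S$ is the identity at every $t\in S$ and the zero map at every $t\in T\ssm S$, so it is surjective on all stalks and hence an epimorphism in the abelian category $\Et_T^\mathrm{dis}(\Rho)$. Applying the exact functor $\prod_T$ and invoking Lemma~\ref{skyprod} to identify $\prod_T(\prod_{t\in S}\sky_t(F_t))\cong\prod_{t\in S} F_t$, I obtain a surjection $\prod_T F\tura\prod_{t\in S}F_t$. A short diagram chase using the naturality of the counit $\omicron$ and of $\mathrm{sk}_S$ (or, equivalently, a direct computation via the section description, where $\mathrm{sk}_S$ sends $s\in F(T)$ to $(s(t))_{t\in S}$) then identifies this surjection with the map $\prod_{t\in S}\omicron_t$, establishing (i).

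For part (ii), Proposition~\ref{H^0} gives $\prod_T F\cong F(T)$, and the very definition of $\omicron$ as the counit of the adjunction $\times T\dashv\prod_T$ forces $\omicron_t$ to coincide, under this identification, with the evaluation map $s\mapsto s(t)\in F_t$. An element of $\bigcap_{t\in T}\ker\omicron_t$ therefore corresponds to a continuous section of $\pi\co F\to T$ that vanishes at every point, which is necessarily the zero section.

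The only genuinely non-formal step I expect is the verification at the end of (i) that the surjection produced by applying $\prod_T$ to $\mathrm{sk}_S$ coincides with $\prod_{t\in S}\omicron_t$; this amounts to a compatibility between two adjunctions and is the one place where some care is required. Everything else then follows essentially formally from the machinery already developed in the preceding sections.
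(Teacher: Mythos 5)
Your proposal is correct and follows essentially the same route as the paper: part (i) is obtained by identifying $\prod_{t\in S}\omicron_t$ with $\prod_T(\mathrm{sk}_S)$ and invoking Corollary~\ref{prodexact} together with Lemma~\ref{skyprod}, and part (ii) reduces via Proposition~\ref{H^0} to the fact that a global section is determined by its germs. Your explicit check that $\mathrm{sk}_S$ is an epimorphism on stalks is a detail the paper leaves implicit, but the argument is the same.
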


\begin{proof}(i): By an argument similar to the one in the proof of Proposition~\ref{H^0}, we see that $\prod_{t\in S}\omicron_t=\prod_T(\mathrm{sk}_S)$.
The conclusion then follows from Corollary~\ref{prodexact} and Lemma~\ref{skyprod}.
\medskip

\noindent
(ii): By Proposition~\ref{H^0}, this item of the proposition is equivalent to 
the fact that a global section $x\in F(T)=\prod_T F$ is uniquely determined by the family of germs $(x_t)_{t\in T}$.
\end{proof}

\begin{definition}We will also denote the product $\prod_T F$ by $\prod_{t\in T}^\mathrm{top}F_t$ and call it the 
\emph{direct product of the \'etal\'e space of torsion discrete $\Rho$-modules $(F,p)$ over $T$}. As we observed above, for $T$ 
discrete (and then finite), there holds $\prod_{t\in T}^\mathrm{top}F_t=\prod_{t\in T}F_t$.
\end{definition}

\subsection{Locally constant sections of products of \'etal\'e spaces in $\cE^\mathrm{dis}_T(\Rho)$}
An important property of the standard direct product $\prod_{i\in I}M_i$ of a family $\{M_i\}_{i\in I}$ of $\Rho$-modules is that 
each member $M_j$ of the family can be uniquely identified with a submodule of $\prod_{i\in I}M_i$ with the property that the restriction
of the natural projection $p_j\co\prod_{i\in I}M_i\to M_j$ to $M_j$ is the identity while, for $j\neq k\in I$, the restriction of the 
projection $p_k\co\prod_{i\in I}M_i\to M_k$ to $M_j$ is the zero map. In general, this property does not hold
for the direct product of an \'etal\'e space of torsion discrete $\Rho$-modules as defined above. 
But a similar (much weaker) property holds, if we restrict to the subcategory $\cE^\mathrm{dis}_T(\Rho)\subset\Et_T^\mathrm{dis}(\Rho)$.

By Proposition~\ref{locallycompact}, for $(F,p)\in\cE^\mathrm{dis}_T(\Rho)$ and  an element $x\in F$, there is a fundamental system 
$\{U_\alpha\}_{\alpha\in\Alpha}$ of clopen neighborhoods of $x$ in $F$ such that $\{p(U_\alpha)\}_{\alpha\in\Alpha}$ is a fundamental system 
of clopen neighborhoods of $p(x)$ in $T$ and the restriction $p|_{U_\alpha}\co U_\alpha\to p(U_\alpha)$
is a homeomorphism. We then have:

\begin{proposition}\label{separate}With the above notations, for all $\alpha\in\Alpha$, 
there is a global section $\td x_\alpha\in\prod_T F=F(T)$ with the property that the restriction of $\td x_\alpha$ to $p(U_\alpha)$ 
is equal to $p|_{U_\alpha}^{-1}$ and the restriction of $\td x_\alpha$ to $T\ssm p(U_\alpha)$ is the zero section.
\end{proposition}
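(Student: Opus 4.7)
The plan is to construct the section $\td x_\alpha$ by piecewise gluing along the clopen decomposition $T = p(U_\alpha) \sqcup (T \ssm p(U_\alpha))$, using that an étalé space of $\Rho$-modules always has a canonical global zero section.

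First, observe that by hypothesis $p(U_\alpha)$ is clopen in $T$ (it is a member of a fundamental system of clopen neighborhoods of $p(x)$), so both $p(U_\alpha)$ and its complement $V := T \ssm p(U_\alpha)$ are open and closed. On $p(U_\alpha)$, define $\td x_\alpha$ to be $p|_{U_\alpha}^{-1}$, which is continuous since $p|_{U_\alpha}$ is a homeomorphism. On $V$, define $\td x_\alpha$ to be the restriction of the global zero section $0 \in F(T)$; this zero section exists because $\cE^\mathrm{dis}_T(\Rho)$ is an abelian category and the sheaf of sections $\cF$ of $(F,p)$ carries a distinguished element $0 \in \cF(T) = F(T)$ whose germ at every $t \in T$ is the zero element of the fiber $F_t$.

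Second, I would verify that this piecewise definition yields a continuous map $\td x_\alpha \co T \to F$. Since $\{p(U_\alpha), V\}$ is a finite cover of $T$ by disjoint clopen subsets and each restriction is continuous, the gluing lemma applies and $\td x_\alpha$ is continuous on $T$. Next, $p \circ \td x_\alpha = \id_T$: on $p(U_\alpha)$ this is $p \circ p|_{U_\alpha}^{-1} = \id$, and on $V$ it holds because the zero section is a section of $p$ by construction. Hence $\td x_\alpha \in F(T) = \prod_T F$ by Proposition~\ref{H^0}.

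Third, the two claimed properties are immediate from the construction: the restriction $\td x_\alpha|_{p(U_\alpha)}$ equals $p|_{U_\alpha}^{-1}$ by definition, and $\td x_\alpha|_{T \ssm p(U_\alpha)}$ equals the zero section by definition. There is no real obstacle here; the only subtle point is noting the existence of a continuous global zero section in $F(T)$, which is guaranteed by the abelian-group structure on the sheaf of sections together with the Hausdorff local-homeomorphism structure established in Proposition~\ref{locallycompact}.
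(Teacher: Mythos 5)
Your proof is correct. The paper actually states this proposition without giving any proof, and your piecewise construction over the clopen decomposition $T = p(U_\alpha)\sqcup(T\ssm p(U_\alpha))$ --- gluing $p|_{U_\alpha}^{-1}$ with the restriction of the global zero section, and checking continuity and $p\circ\td x_\alpha=\id_T$ --- is precisely the routine verification the author leaves implicit; the one genuinely non-trivial point, the existence and continuity of the zero section coming from the $\Rho$-module structure on the sheaf of sections, is the right thing to single out and you justify it correctly.
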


\subsection{The coproduct functor}
In this section, we introduce the coproduct of an \'etal\'e space of profinite $\Rho$-modules. 
The \emph{coproduct functor}:
\[\bigoplus_T\co\cE_T^\mathrm{pro}(\Rho)\to\Rho\text{-}\mod^\mathrm{pro}\]
is defined to be the left adjoint of the functor $\times T$ (cf.\ Section~\ref{trivial}), that is to say, is characterized by the property that, 
for every \'etal\'e space of profinite $\Rho$-modules $F$ over $T$ and every profinite $\Rho$-module $A$, there is the identity:
\[\hom_{\cE_T^\mathrm{pro}(\Rho)}(F, A_T)=\hom_{\Rho\text{-}\mod^\mathrm{pro}}(\bigoplus_TF,A).\]

The existence of the coproduct functor $\bigoplus_T$ immediately follows from a form of the Special Adjoint Functor Theorem
(cf.\ \cite[Corollary to Theorem~2, Section~8, Ch.~V]{MacLane}). The category $\Rho\text{-}\mod^\mathrm{pro}$ is in fact
small complete, well-powered, with small hom-sets and a small cogenerating set (e.g.\ a set of representatives for finite $\Rho$-modules),
while the category $\cE_T^\mathrm{pro}(\Rho)$ has small hom-sets and the functor $\times T$ preserve small limits.

\begin{definition}We will also denote the coproduct $\bigoplus_T F$ by the more suggestive notation $\bigoplus_{t\in T}^\mathrm{top}F_t$ 
and call it the \emph{direct sum of the \'etal\'e space of profinite $\Rho$-modules $(F,p)$ over $T$}. For $T$ discrete (and then finite),
there holds $\bigoplus_{t\in T}^\mathrm{top}F_t=\bigoplus_{t\in T}F_t$
\end{definition}

\begin{remark}\label{RibesDef}
Note that the datum of a morphism $F\to A_T$ of \'etal\'e spaces of profinite $\Rho$-modules over $T$ is equivalent to  the datum a 
\emph{morphism} $F\to A$ as defined in \cite[Section~5.1]{R}, so that the universal property which defines the coproduct functor above
is equivalent to the one which defines free products of 'sheaves' of profinite groups in \cite{R}.
\end{remark}

\subsection{Products, coproducts and Pontryagin duality}
For $A\in\Rho\text{-}\mod^\mathrm{pro}$ or $\Rho\text{-}\mod^\mathrm{dis}$, there is clearly a natural isomorphism
$(A_T)^\vee\cong A^\vee_T$. By Theorem~\ref{Pontryagindualityspaces} and the universal properties of direct sums and 
direct products of \'etal\'e spaces of $\Rho$-modules over $ T$, we then have:

\begin{proposition}\label{dualproduct}\leavevmode\begin{enumerate}
\item For $(F,p)\in\cE_T^\mathrm{pro}(\Rho)$, let $(F^\vee,p^\vee)\in\cE_T^\mathrm{dis}(\Rho)$ be its Pontryagin dual. 
There is then a natural isomorphism of discrete $\Rho$-modules: 
\[(\bigoplus_T F)^\vee\cong\prod_T F^\vee.\]
\item For $(F,p)\in\cE_T^\mathrm{dis}(\Rho)$, let $(F^\vee,p^\vee)\in\cE_T^\mathrm{pro}(\Rho)$ be its Pontryagin dual. 
There is then a natural isomorphism of profinite $\Rho$-modules:
\[(\prod_T F)^\vee\cong\bigoplus_T F^\vee.\]
\end{enumerate}
\end{proposition}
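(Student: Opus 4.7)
The plan is to verify both isomorphisms by a purely formal Yoneda-style argument, chaining together the three adjunctions/dualities already established: the defining universal properties of $\prod_T$ and $\bigoplus_T$, the Pontryagin duality on modules (between $\Rho\text{-}\mathrm{mod}^{\mathrm{pro}}$ and $\Rho\text{-}\mathrm{mod}^{\mathrm{dis}}$), and the Pontryagin duality on \'etal\'e spaces (Theorem~\ref{Pontryagindualityspaces}). The natural isomorphism $(A_T)^\vee \cong (A^\vee)_T$ recorded just before the proposition is the glue that lets these three dualities interlock.

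For item (i), let $B$ be an arbitrary discrete torsion $\Rho$-module. I would compute
\[
\hom_{\Rho\text{-}\mathrm{mod}^{\mathrm{dis}}}\!\bigl(B,(\bigoplus_T F)^\vee\bigr)
  \cong \hom_{\Rho\text{-}\mathrm{mod}^{\mathrm{pro}}}\!\bigl(\bigoplus_T F,B^\vee\bigr)
  \cong \hom_{\cE_T^{\mathrm{pro}}(\Rho)}\!\bigl(F,(B^\vee)_T\bigr)
\]
using first the duality of module categories and then the adjunction defining $\bigoplus_T$. Next I identify $(B^\vee)_T$ with $(B_T)^\vee$ and apply Theorem~\ref{Pontryagindualityspaces}, which gives
\[
\hom_{\cE_T^{\mathrm{pro}}(\Rho)}\!\bigl(F,(B_T)^\vee\bigr)
  \cong \hom_{\cE_T^{\mathrm{dis}}(\Rho)}\!\bigl(B_T,F^\vee\bigr)
  \cong \hom_{\Rho\text{-}\mathrm{mod}^{\mathrm{dis}}}\!\bigl(B,\prod_T F^\vee\bigr),
\]
where the last step uses the adjunction defining $\prod_T$. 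Since the resulting natural isomorphism $\hom(B,-)\cong\hom(B,-)$ holds for every $B$ in $\Rho\text{-}\mathrm{mod}^{\mathrm{dis}}$, Yoneda's lemma in that category yields a natural isomorphism $(\bigoplus_T F)^\vee \cong \prod_T F^\vee$.

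Item (ii) is formally dual. Starting from an arbitrary profinite $\Rho$-module $A$, I chain the Pontryagin duality of modules, the adjunction defining $\prod_T$, the isomorphism $(A^\vee)_T \cong (A_T)^\vee$ together with Theorem~\ref{Pontryagindualityspaces}, and finally the adjunction defining $\bigoplus_T$, to obtain
\[
\hom_{\Rho\text{-}\mathrm{mod}^{\mathrm{pro}}}\!\bigl(A,(\prod_T F)^\vee\bigr)
  \cong \hom_{\Rho\text{-}\mathrm{mod}^{\mathrm{pro}}}\!\bigl(\bigoplus_T F^\vee,A\bigr),
\]
naturally in $A$. Yoneda's lemma in $\Rho\text{-}\mathrm{mod}^{\mathrm{pro}}$ then gives the claimed isomorphism $(\prod_T F)^\vee \cong \bigoplus_T F^\vee$.

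There is no real obstacle: the content is entirely formal once the three dualities are in place. The only point where a small check is needed is the compatibility of Pontryagin duality with the trivial \'etal\'e space functor, i.e.\ the identification $(A_T)^\vee \cong (A^\vee)_T$ as \'etal\'e spaces; this comes from the fact that the sheaf of sections of $A_T$ is the constant sheaf $\ul{A}_T$ and that $\chom_{\ul{\ZZ}}(\ul{A}_T,\ul{\Q/\Z})$ is canonically the constant sheaf $\ul{A^\vee}_T$ when $A$ is finite, extended to the general case by passing to the appropriate (co)filtered (co)limits as in the proof of Proposition~\ref{Pontryagindualitydef}. With that compatibility in hand, the two isomorphisms of the proposition are just the diagram-chase sketched above.
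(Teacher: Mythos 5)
Your argument is correct and is essentially identical to what the paper does: the paper gives no written proof beyond observing that the claim follows from $(A_T)^\vee\cong A^\vee_T$, Theorem~\ref{Pontryagindualityspaces} and the universal properties of $\bigoplus_T$ and $\prod_T$, and your Yoneda chain is precisely the formal content of that observation. One small slip to fix in item (ii): the displayed isomorphism should read $\hom_{\Rho\text{-}\mod^\mathrm{pro}}((\prod_T F)^\vee,A)\cong\hom_{\Rho\text{-}\mod^\mathrm{pro}}(\bigoplus_T F^\vee,A)$, both sides being contravariant in $A$ so that contravariant Yoneda applies; as you wrote it, $\hom(A,(\prod_T F)^\vee)$ and $\hom(\bigoplus_T F^\vee,A)$ have opposite variance in $A$, though the intended (and correct) chain of adjunctions is clear from your description.
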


From Proposition~\ref{dualproduct}, Corollary~\ref{prodexact} and Corollary~\ref{proddecomposition}, it immediately follows:

\begin{corollary}\label{sumexact}The functor $\bigoplus_T$ is exact.
\end{corollary}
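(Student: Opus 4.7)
The plan is to deduce exactness of $\bigoplus_T$ from exactness of $\prod_T$ by a two-step dualization, using that Pontryagin duality is an exact duality on both the category of \'etal\'e spaces (Theorem~\ref{Pontryagindualityspaces}) and on the modules themselves.

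Concretely, given a short exact sequence $0\to F'\to F\to F''\to 0$ in $\cE^\mathrm{pro}_T(\Rho)$, I would first apply the Pontryagin dual functor $\Phi_T$ to obtain a short exact sequence
\[0\to (F'')^\vee\to F^\vee\to (F')^\vee\to 0\]
in $\cE^\mathrm{dis}_T(\Rho)$; exactness is guaranteed because $\Phi_T$ is a duality of abelian categories. Next, apply the functor $\prod_T$, which is exact by Corollary~\ref{prodexact}, to obtain a short exact sequence
\[0\to \prod_T(F'')^\vee\to \prod_T F^\vee\to \prod_T(F')^\vee\to 0\]
of discrete $\Rho$-modules.

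Now invoke Proposition~\ref{dualproduct}(i) termwise to identify this with
\[0\to (\bigoplus_T F'')^\vee\to (\bigoplus_T F)^\vee\to (\bigoplus_T F')^\vee\to 0;\]
here one must check that the isomorphism $(\bigoplus_T F)^\vee\cong\prod_T F^\vee$ is natural in $F$, which follows from the fact that it is built from the universal properties of $\bigoplus_T$ and $\prod_T$ together with Pontryagin duality on $\Rho$-modules. Finally, apply ordinary Pontryagin duality $\hom_\ZZ(\_\,,\Q/\Z)$ between $\Rho\text{-}\mod^\mathrm{dis}$ and $\Rho\text{-}\mod^\mathrm{pro}$, which is itself an exact contravariant equivalence, to recover the desired short exact sequence
\[0\to \bigoplus_T F'\to \bigoplus_T F\to \bigoplus_T F''\to 0\]
in $\Rho\text{-}\mod^\mathrm{pro}$.

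The argument is essentially formal once duality is in place, so there is no serious obstacle; the only point that needs care is the naturality of the isomorphism of Proposition~\ref{dualproduct}(i), which is why Corollary~\ref{proddecomposition} is useful: the local decomposition of products over disjoint clopen subsets, combined with the explicit description $\prod_T F\cong F(T)$ given in Proposition~\ref{H^0}, makes it straightforward to verify that the adjunction isomorphisms defining both sides are compatible with morphisms of \'etal\'e spaces, so that the duality swap commutes with morphisms in short exact sequences.
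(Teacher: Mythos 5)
Your proof is correct and is essentially the argument the paper intends: the paper simply states that exactness of $\bigoplus_T$ ``immediately follows'' from Proposition~\ref{dualproduct} and Corollary~\ref{prodexact}, i.e.\ from writing $\bigoplus_T$ as the composite of the duality $\Phi_T$, the exact functor $\prod_T$, and Pontryagin duality on modules, exactly as you spell out. (Your closing appeal to Corollary~\ref{proddecomposition} is unnecessary for the naturality check, but harmless.)
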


\begin{corollary}\label{coproddecomposition}Let $(F,\pi)\in\cE_T^\mathrm{pro}(\Rho)$ and let $T=\coprod_{i=1}^kU_i$ 
be a decomposition into disjoint clopen subsets. There is then a natural isomorphism:
\[\bigoplus_T F=\bigoplus_{i=1}^k\bigoplus_{U_i}F|_{U_i}.\]
\end{corollary}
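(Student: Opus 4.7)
The plan is to derive the coproduct decomposition directly from the product decomposition of Corollary~\ref{proddecomposition} by Pontryagin duality, which is the guiding principle of this subsection. The universal property of $\bigoplus_T$ would also give a direct glueing argument, but the duality route fits the pattern already set and reuses machinery that is on the table.

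First I would record a small preliminary compatibility: restriction to a clopen subset $U\subseteq T$ commutes with Pontryagin duality on \'etal\'e spaces, in the sense that for $(F,\pi)\in\cE_T^{\mathrm{pro}}(\Rho)$ there is a natural isomorphism $(F|_U)^\vee\cong F^\vee|_U$ in $\cE_U^{\mathrm{dis}}(\Rho)$. At the level of sheaves of sections, this is immediate from the fact that restriction of sheaves commutes with the internal $\chom$ into the constant sheaf $\ul{\Q/\Z}$, and the identification agrees stalk-by-stalk thanks to the last clause of Proposition~\ref{Pontryagindualitydef}.

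With this in hand, I would chain the following natural isomorphisms. Proposition~\ref{dualproduct}(i) gives $(\bigoplus_T F)^\vee\cong\prod_T F^\vee$; Corollary~\ref{proddecomposition} applied to $F^\vee\in\cE_T^{\mathrm{dis}}(\Rho)$ together with the decomposition $T=\coprod_{i=1}^k U_i$ rewrites this as $\prod_{i=1}^k\prod_{U_i}F^\vee|_{U_i}$; the preliminary compatibility replaces each factor by $\prod_{U_i}(F|_{U_i})^\vee$; and Proposition~\ref{dualproduct}(i) applied on each $U_i$ turns each factor into $(\bigoplus_{U_i}F|_{U_i})^\vee$. A finite product of Pontryagin duals of profinite $\Rho$-modules is the Pontryagin dual of the corresponding finite direct sum, so the whole chain identifies $(\bigoplus_T F)^\vee$ with $(\bigoplus_{i=1}^k\bigoplus_{U_i}F|_{U_i})^\vee$. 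Applying the duality of Theorem~\ref{Pontryagindualityspaces} once more yields the claimed isomorphism in $\Rho\text{-}\mod^{\mathrm{pro}}$.

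The only real, and rather minor, obstacle is the preliminary compatibility between restriction to clopen subsets and Pontryagin duality; once that is in place the rest of the argument is a mechanical concatenation of naturality statements already established in this section.
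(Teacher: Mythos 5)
Your proof is correct and follows essentially the same route as the paper, which states this corollary as an immediate consequence of Proposition~\ref{dualproduct} and Corollary~\ref{proddecomposition}; your argument is just that dualization chain written out, together with the (true and easily checked) compatibility $(F|_U)^\vee\cong F^\vee|_U$ coming from the fact that the sheaf $\chom$ into $\ul{\Q/\Z}$ commutes with restriction to open subsets.
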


\subsection{Properties of the coproduct functor}
Let $\omega\co F\to(\bigoplus_TF)_T$ be the canonical morphism determined by the adjunction unit.  
This homomorphism induces on the fibers the
canonical homomorphisms of profinite $\Rho$-modules $\omega_t\co F_t\to\bigoplus_TF$ and it is clear that 
$\omega_t=\omicron_t^\vee$, for $t\in T$. 

In particular, the family of homomorphisms $\{\omega_t\}_{t\in T}$ induces a
natural homomorphism of $\Rho$-modules from the standard direct sum of the family $\{F_t\}_{t\in T}$:
\[\Omega\co\bigoplus_{t\in T}F_t\to\bigoplus_TF.\]
The following proposition then shows that $\bigoplus_TF$ is a profinite completion of $\bigoplus_{t\in T}F_t$:

\begin{proposition}\label{coprodprop}For $(F,p)$ an \'etal\'e space of profinite $\Rho$-modules over $T$, the natural
homomorphism of $\Rho$-modules $\Omega\co\bigoplus_{t\in T}F_t\to\bigoplus_TF$ is injective and has dense image.
\end{proposition}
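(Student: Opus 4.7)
The plan is to deduce both assertions from Proposition~\ref{prodprop} applied to the Pontryagin dual étalé space $(F^\vee,p^\vee)\in\cE_T^\mathrm{dis}(\Rho)$, whose stalks are $F_t^\vee$. The starting observation is that, by Proposition~\ref{dualproduct}(i), there is a natural isomorphism $(\bigoplus_T F)^\vee\cong\prod_T F^\vee$, and under this identification, precomposition with $\omega_t$ corresponds to the canonical projection $\omicron_t\co\prod_T F^\vee\to F_t^\vee$ associated to $(F^\vee,p^\vee)$. This is the combined content of the relation $\omega_t=\omicron_t^\vee$ and the involutivity of Pontryagin duality on $\cE_T^\mathrm{pro}(\Rho)\cup\cE_T^\mathrm{dis}(\Rho)$ from Theorem~\ref{Pontryagindualityspaces}. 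I expect this identification to be the main technical obstacle: it requires tracing through the coproduct adjunction defining $\bigoplus_T$ and the duality statements simultaneously. Once it is in place, both halves of the proposition reduce directly to the two items of Proposition~\ref{prodprop}.

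For density: since $\bigoplus_T F$ is a profinite $\Rho$-module, a $\Rho$-submodule is dense precisely when its annihilator in the Pontryagin dual vanishes. A continuous homomorphism $\phi\in(\bigoplus_T F)^\vee\cong\prod_T F^\vee$ annihilates $\Omega(\bigoplus_{t\in T}F_t)$ if and only if $\phi\circ\omega_t=\omicron_t(\phi)=0$ for every $t\in T$; by Proposition~\ref{prodprop}(ii) applied to $(F^\vee,p^\vee)$, this forces $\phi=0$, so $\Omega(\bigoplus_{t\in T}F_t)$ is dense in $\bigoplus_T F$.

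For injectivity: again by Pontryagin duality, the dual $(\bigoplus_T F)^\vee$ separates points, so it suffices, given a nonzero $a=\sum_{t\in S}a_t$ with $S\subset T$ finite and some $a_{t_0}\neq 0$, to exhibit $\phi\in (\bigoplus_T F)^\vee$ with $\phi(\Omega(a))\neq 0$. Pontryagin duality for the profinite module $F_{t_0}$ yields $y_{t_0}\in F_{t_0}^\vee$ with $y_{t_0}(a_{t_0})\neq 0$. Proposition~\ref{prodprop}(i) applied to $(F^\vee,p^\vee)$ and the finite set $S$ shows that $\prod_{t\in S}\omicron_t\co\prod_T F^\vee\to\prod_{t\in S}F_t^\vee$ is surjective, so there exists $\phi\in\prod_T F^\vee\cong(\bigoplus_T F)^\vee$ with $\omicron_{t_0}(\phi)=y_{t_0}$ and $\omicron_t(\phi)=0$ for $t\in S\ssm\{t_0\}$; then
\[
\phi(\Omega(a))=\sum_{t\in S}\omicron_t(\phi)(a_t)=y_{t_0}(a_{t_0})\neq 0,
\]
whence $\Omega(a)\neq 0$, completing the proof.
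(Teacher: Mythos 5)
Your proof is correct. The injectivity half is essentially the paper's argument: the paper also reduces to a finite subset $S$ and invokes item (i) of Proposition~\ref{prodprop} together with Proposition~\ref{dualproduct}; you simply unwind the duality element-by-element, using surjectivity of $\prod_{t\in S}\omicron_t$ on $\prod_T F^\vee$ to produce a separating functional, which is a legitimate (and more explicit) way of saying that the dual of a surjection of discrete modules is an injection of profinite ones.

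The density half takes a genuinely different route. The paper does not dualize at all here: it argues that if the image of $\Omega$ were not dense, then $\omega\co F\to(\bigoplus_TF)_T$ would factor through $B_T$ for a proper closed submodule $B$ of $\bigoplus_TF$, and the universal property of the coproduct would then force the identity of $\bigoplus_TF$ to factor through $B$, a contradiction. You instead pass to $(\bigoplus_TF)^\vee\cong\prod_TF^\vee$, identify the annihilator of $\Omega(\bigoplus_{t\in T}F_t)$ with $\bigcap_{t\in T}\ker\omicron_t$ via the relation $\omega_t=\omicron_t^\vee$, and apply item (ii) of Proposition~\ref{prodprop}. Both arguments are sound; the paper's is self-contained at the level of adjunctions and avoids the (standard but unstated) fact that a closed submodule of a profinite module is dense iff its annihilator vanishes, while yours has the virtue of making the two halves of the proposition visibly dual to the two items of Proposition~\ref{prodprop}, which is very much in the spirit of how the paper sets up the product/coproduct duality.
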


\begin{proof}To prove that $\Omega$ is injective, it is enough to prove that, for any finite subset $S$ of $T$, the natural homomorphism
$\bigoplus_{t\in S}\omega_t\co\bigoplus_{t\in S}F_t\to\bigoplus_TF$ is injective. But this immediately follows from (i) of Proposition~\ref{prodprop}
and Proposition~\ref{dualproduct}.

For the last statement of the proposition, it is enough to show that $\bigoplus_T F$ is topologically generated by the family of submodules $\{F_t\}_{t\in T}$.
If this were not the case, then we could factor the morphism $\omega$ through a morphism $\omega'\co F\to B_T$, for some proper 
profinite submodule $B$ of $\bigoplus_T F$. But then the identity map on $\bigoplus_T F$ would admit a similar factorization, which is absurd.
\end{proof}


\subsection{Compatibility with Hom and Tensor functors}
In this section, we will show that the functors $\bigoplus_T$ and $\prod_T$ satisfy the standard compatibility properties 
with respect to the Hom and Tensor functors. More precisely, we have:

\begin{theorem}\label{compatibility}For $(F,p)\in\cE_T^\mathrm{pro}(\Rho)$, $(G,q)\in\cE_T^\mathrm{dis}(\Rho)$, 
$A\in\Rho\text{-}\mod^\mathrm{pro}$, $B\in\ZZ\text{-}\mod^\mathrm{pro}$ and $C\in\Rho\text{-}\mod^\mathrm{dis}$, 
there are natural isomorphisms:
\begin{subequations}
\begin{equation}\tag{i}\hom_\Rho(A,\prod_T G)\cong\prod_T\chom_\Rho(A_T,G);\end{equation}
\begin{equation}\tag{ii}\hom_\Rho(\bigoplus_TF,C)\cong\prod_T\chom_\Rho(F,C_T).\end{equation}
\begin{equation}\tag{iii}\bigoplus_T (F\wh{\otimes}B_T)\cong(\bigoplus_TF)\wh{\otimes}B;\end{equation}
\end{subequations}
\end{theorem}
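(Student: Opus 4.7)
The plan is to exploit the identifications $\prod_T H\cong H(T)$ from Proposition~\ref{H^0} and $\chom_\Rho(F,G)(T)\cong\hom_\Rho(F,G)$ from Remark~\ref{sections}, which together reduce the products on the right-hand sides of (i) and (ii) to morphism sets of \'etal\'e spaces. For (i), combining these two gives
\[\prod_T\chom_\Rho(A_T,G)\cong\chom_\Rho(A_T,G)(T)\cong\hom_\Rho(A_T,G);\]
writing $\ul{A}_T\cong p^{-1}\ul{A}_{\{\ast\}}$ for the projection $p\co T\to\{\ast\}$, the pushforward/pullback adjunction for sheaves (already exploited in the proof of Proposition~\ref{H^0}) then identifies this with $\hom_\Rho(A,\cG(T))\cong\hom_\Rho(A,\prod_TG)$, as claimed.

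For (ii), the same reduction gives $\prod_T\chom_\Rho(F,C_T)\cong\hom_\Rho(F,C_T)$. I write $C=\colim_iC_i$ as the filtered colimit of its finite $\Rho$-submodules, and note that any morphism $\phi\co F\to C_T$ has compact image in $C\times T$ whose projection to the discrete space $C$ is finite; enlarging to the (still finite, since $\Rho$ is profinite and $C$ is torsion) $\Rho$-submodule it generates, $\phi$ factors through some $(C_i)_T\in\cE^\mathrm{pro}_T(\Rho)$. Hence
\[\hom_\Rho(F,C_T)\cong\colim_i\hom_{\cE^\mathrm{pro}_T(\Rho)}(F,(C_i)_T)\cong\colim_i\hom_\Rho(\bigoplus_TF,C_i)\cong\hom_\Rho(\bigoplus_TF,C),\]
where the middle step is the defining universal property of $\bigoplus_T$ and the last one uses that continuous $\Rho$-homomorphisms from the profinite module $\bigoplus_TF$ to the discrete torsion module $C$ have finite image.

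For (iii) I pass to Pontryagin duals via Theorem~\ref{Pontryagindualityspaces}. Proposition~\ref{dualproduct}(i) gives $(\bigoplus_T(F\wh{\otimes}B_T))^\vee\cong\prod_T(F\wh{\otimes}B_T)^\vee$, while Theorem~\ref{adjunctiso}(i), applied with coefficient ring $\ZZ$ and $L=\ul{\Q/\Z}_T$, produces the natural isomorphism of \'etal\'e spaces $(F\wh{\otimes}B_T)^\vee\cong\chom_\ZZ(B_T,F^\vee)$. Combining part (i) of the present theorem (for the ring $\ZZ$, with $A=B$ and $G=F^\vee$) with Proposition~\ref{dualproduct}(i) once more then yields
\[\prod_T\chom_\ZZ(B_T,F^\vee)\cong\hom_\ZZ(B,\prod_TF^\vee)\cong\hom_\ZZ(B,(\bigoplus_TF)^\vee),\]
and the classical identity $\hom_\ZZ(B,M^\vee)\cong(M\wh{\otimes}B)^\vee$ for profinite $\ZZ$-modules rewrites the final term as $((\bigoplus_TF)\wh{\otimes}B)^\vee$. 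Dualizing back delivers the required isomorphism. The main obstacle is to carry the $\Rho$-module structure through this duality chain: every intermediate isomorphism must be $\Rho$-equivariant (and not merely $\ZZ$-linear) and natural in $F$ and $B$, so that the dualized iso indeed lives in $\Rho\text{-}\mod^\mathrm{pro}$; one should also verify that Theorem~\ref{adjunctiso}(i) remains applicable when the coefficient ring is restricted from $\Rho$ to $\ZZ$.
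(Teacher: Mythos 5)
Your part (i) is essentially the paper's own argument: both reduce $\prod_T\chom_\Rho(A_T,G)$ to $\hom_\Rho(A_T,G)$ via Proposition~\ref{H^0} and Remark~\ref{sections} and then invoke the adjunction underlying $\prod_T$ (the paper cites the defining adjunction directly, you re-derive it from $p^{-1}\dashv p_\ast$). For (ii) and (iii) you take genuinely different routes. The paper gets (ii) by Pontryagin duality from (i), via $\hom_\Rho(\bigoplus_TF,C)\cong\hom_\Rho(C^\vee,\prod_TF^\vee)$ and the identification $\chom_\Rho(F,C_T)\cong\chom_\Rho(C^\vee_T,F^\vee)$; your direct argument --- factoring any morphism $F\to C_T$ through $(C_i)_T$ for a finite submodule $C_i$ using compactness of $F$, then applying the universal property of $\bigoplus_T$ levelwise and passing to the colimit --- is correct (finitely generated submodules of a discrete torsion $\Rho$-module are indeed finite, since annihilators of elements are open in the profinite ring $\Rho$) and is more self-contained, at the price of redoing a compactness argument that duality gives for free. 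For (iii) the roles are reversed: the paper first builds the canonical comparison map from $\omega\co F\to(\bigoplus_TF)_T$ by tensoring with $B_T$ and applying the universal property of $\bigoplus_T$ --- so the map is $\Rho$-linear by construction --- and then certifies bijectivity by testing against $\hom_\ZZ(\_\,,D)$ for finite $D$ via Theorem~\ref{adjunctiso}(ii); you instead dualize and run (i) backwards. Your chain is valid at the level of $\ZZ$-modules, but the obstacle you flag at the end is real and not merely cosmetic: the sheaf-Hom \'etal\'e space is only defined in the paper as an object of $\cE^\mathrm{dis}_T(\ZZ)$, so the isomorphism $(F\wh{\otimes}B_T)^\vee\cong\chom_\ZZ(B_T,F^\vee)$ and everything downstream of it only records a $\ZZ$-structure, and an abstract $\ZZ$-linear isomorphism of duals does not by itself produce the asserted isomorphism of profinite $\Rho$-modules. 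The clean repair is exactly the paper's device: construct the canonical $\Rho$-linear map first and use a duality chain such as yours only to verify that it is bijective, which can be checked after forgetting down to $\ZZ$.
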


\begin{proof}(i): By Proposition~\ref{H^0}, Remark~\ref{sections} and the definition of the functor $\prod_T$,
there is a series of natural isomorphisms:
\[\prod_T\chom_\Rho(A_T,G)\cong\chom_\Rho(A_T,G)(T)=\hom_\Rho(A_T,G)=\hom_\Rho(A,\prod_T G).\]
\smallskip

\noindent
(ii): By Pontryagin duality, item (i) of Proposition~\ref{dualproduct} and the previous item, there is a series of natural isomorphisms:
\[\hom_\Rho(\bigoplus_TF,C)\cong\hom_\Rho(C^\vee,\prod_TF^\vee)\cong\prod_T\chom_\Rho(C^\vee_T,F^\vee).\]
Now, from the definition of the hom \'etal\'e space $\chom_\Rho(F,C_T)$, it immediately follows that there is a natural isomorphism
of \'etal\'e spaces of torsion discrete $\ZZ$-modules:
\[\chom_\Rho(F,C_T)\cong\chom_\Rho(C^\vee_T,F^\vee),\]
from which the conclusion follows.
\smallskip

\noindent
(iii): There is a canonical homomorphism of \'etal\'e spaces $\omega\co F\to(\bigoplus_TF)_T$ 
(cf.\ the proof of Proposition~\ref{coprodprop}). Tensoring by the constant \'etal\'e space $B_T$, we get the natural homomorphism of \'etal\'e spaces
$\omega\wh{\otimes}B_T\co F\wh{\otimes}B_T\to(\bigoplus_TF)\wh{\otimes}B_T=((\bigoplus_TF)\wh{\otimes}B)_T$ which, by the universal property 
of the functor $\bigoplus_T$, induces the natural homomorphism of profinite $\Rho$-modules:
\[\bigoplus_T(F\wh{\otimes}B_T)\to(\bigoplus_TF)\wh{\otimes}B.\]
In order to prove that this is an isomorphism, it is enough to show that, for every finite $R$-module $D$ (cf.\ the hypotheses on the $\ZZ$-algebra $\Rho$
at the beginning of Section~\ref{algdef}), it induces a natural isomorphism:
\[\hom_\ZZ((\bigoplus_TF)\wh{\otimes}B,D)\cong\hom_\ZZ(\bigoplus_T(F\wh{\otimes}B_T),D).\]
This follows from the series of isomorphisms (cf.\ \cite[(c) of Proposition~5.5.4]{RZ}, the adjunction identity which defines the functor $\bigoplus_T$ 
and (ii) of Theorem~\ref{adjunctiso}):
\[\begin{array}{l}
\hom_\ZZ((\bigoplus_T F)\wh{\otimes}B,D)\cong\hom_\ZZ(\bigoplus_TF,\hom_\ZZ(B,D))\cong\hom_\ZZ(F,\hom_\ZZ(B,D)_T)\cong\\
\\
\cong\hom_\ZZ(F,\chom_\ZZ(B_T,D_T))\cong\hom_\ZZ(F\wh{\otimes}B_T,D_T)\cong\hom_\ZZ(\bigoplus_T(F\wh{\otimes}B_T),D).
\end{array}\]
\end{proof}

\subsection{Free sums of profinite $\Rho$-modules}
In this section, we study in more detail the coproduct of a constant \'etal\'e space over $T$. This can be related to a standard construction
in the theory of profinite $\Rho$-modules. Let $T$ be a profinite space obtained as the cofiltered limit of a diagram of finite discrete sets 
$\{T_\alpha\}_{\alpha\in\Lambda}$. For $A\in\Rho\text{-}\mod^\mathrm{pro}$, let $A[T_\alpha]:=\bigoplus_{t\in T_\alpha} A$ be the
direct sum of $|T_\alpha|$ copies of $A$ indexed by $T_\alpha$, for $\alpha\in\Lambda$. A map of finite sets $T_\alpha\to T_\beta$
induces a homomorphism of $\Rho$-modules $A[T_\alpha]\to A[T_\beta]$, for $\alpha,\beta\in\Lambda$, so that
$\{A[ T_\alpha]\}_{\alpha\in\Lambda}$ is a cofiltered diagram of profinite $\Rho$-modules. 
The \emph{free sum of $A$ over $T$} is then defined to be the limit: 
\[A[[T]]=\lim_{\alpha\in\Lambda}A[T_\alpha].\]

\begin{proposition}\label{freesum}For $A\in\Rho\text{-}\mod^\mathrm{pro}$ and $T$ a profinite space, there is a natural isomorphism
of profinite $\Rho$-modules $\bigoplus_T A_T\cong A[[T]]$.
\end{proposition}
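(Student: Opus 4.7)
The plan is to verify the claim via Pontryagin duality, reducing it to the explicit formula for the product of a constant étalé space of discrete torsion modules, and then to a well-known limit computation.

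First, apply item (ii) of Proposition~\ref{dualproduct} to the constant étalé space $A^\vee_T \in \cE^\mathrm{dis}_T(\Rho)$, whose Pontryagin dual is the constant étalé space $A_T$ (since Pontryagin duality exchanges constant étalé spaces with fiber $M$ and fiber $M^\vee$). This gives a natural isomorphism
\[\bigoplus_T A_T \;\cong\; \bigl(\prod_T A^\vee_T\bigr)^{\!\vee}.\]
By Corollary~\ref{freeproddescr}, $\prod_T A^\vee_T \cong (A^\vee)^T = \cC(T, A^\vee)$, so the task reduces to establishing a natural isomorphism of profinite $\Rho$-modules
\[\cC(T, A^\vee)^\vee \;\cong\; A[[T]].\]

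Second, write $T = \lim_{\alpha \in \Lambda} T_\alpha$ as in the definition of the free sum, with each $T_\alpha$ finite. Since $A^\vee$ is discrete torsion and $T$ is compact, any continuous map $T \to A^\vee$ has finite image and is locally constant, hence factors through some $T_\alpha$. Consequently
\[\cC(T, A^\vee) \;=\; \colim_\alpha \cC(T_\alpha, A^\vee) \;=\; \colim_\alpha A^\vee[T_\alpha],\]
where on each finite set $T_\alpha$ the continuous maps into $A^\vee$ form the direct sum $\bigoplus_{t \in T_\alpha} A^\vee = A^\vee[T_\alpha]$.

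Third, apply Pontryagin duality for (topological) $\Rho$-modules, which converts a filtered colimit of discrete torsion modules into the cofiltered limit of the Pontryagin duals. Since $T_\alpha$ is finite, the direct sum and direct product coincide and $(A^\vee[T_\alpha])^\vee = \prod_{t \in T_\alpha}(A^\vee)^\vee = \prod_{t \in T_\alpha} A = A[T_\alpha]$. Therefore
\[\cC(T, A^\vee)^\vee \;\cong\; \lim_\alpha A[T_\alpha] \;=\; A[[T]],\]
and combining with the first step yields the desired natural isomorphism $\bigoplus_T A_T \cong A[[T]]$.

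The main point requiring care is naturality, together with the independence of $A[[T]]$ from the chosen cofiltered presentation of $T$. Both are however essentially formal: the left-hand side is characterized by the adjunction defining $\bigoplus_T$, while the right-hand side coincides with the intrinsically defined Pontryagin dual of $\cC(T, A^\vee)$. No substantive obstacle arises beyond this bookkeeping, since all the intermediate identifications are compatible with the transition maps in the system $\{T_\alpha\}$.
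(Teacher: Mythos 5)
Your proof is correct and follows essentially the same route as the paper: reduce via Proposition~\ref{dualproduct} and Corollary~\ref{freeproddescr} to showing $((A^\vee)^T)^\vee\cong A[[T]]$, then use the fact that continuous maps from the compact space $T$ to the discrete module $A^\vee$ factor through some finite quotient $T_\alpha$, and finally dualize the resulting filtered colimit into the cofiltered limit defining $A[[T]]$. The only difference is that you spell out the justification for $\cC(T,A^\vee)=\colim_\alpha\cC(T_\alpha,A^\vee)$ and the naturality bookkeeping, which the paper leaves implicit.
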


\begin{proof}By Proposition~\ref{dualproduct} and Corollary~\ref{freeproddescr}, it is enough to show
that there is a natural isomorphism $((A^\vee)^T)^\vee\cong A[[T]]$. Let, as above, $T$ be the cofiltered limit 
of a diagram of finite discrete sets $\{T_\alpha\}_{\alpha\in\Lambda}$. We then have:
\[((A^\vee)^T)^\vee\cong((A^\vee)^{\lim_{\alpha\in\Lambda}T_\alpha})^\vee\cong(\colim_{\alpha\in\Lambda}(A^\vee)^{T_\alpha})^\vee
\cong\lim_{\alpha\in\Lambda}((A^\vee)^{T_\alpha})^\vee\cong\lim_{\alpha\in\Lambda}\bigoplus_{T_\alpha} A_{T_\alpha}=A[[T]].\]
\end{proof}

\subsection{Relative free products and sums}
Let  $\pi\co T\to S$ be a continuous surjective map of profinite spaces. Then, the inverse image $\pi^{-1}(s)$ 
is a profinite set and, for a given torsion discrete (resp.\ profinite) $\Rho$-module $A$, we get a family of free products $\{A^{\pi^{-1}(s)}\}_{s\in S}$
(resp.\ of free sums $\{A[[\pi^{-1}(s)]]\}_{s\in S}$). It is not difficult to check directly that  these families of closed submodules of $A^T$ and $A[[T]]$, 
respectively, have a natural structure of \'etal\'e spaces over $S$. However, here, we give a more direct and simpler proof of this fact.

For an \'etal\'e space $F$ over a space $T$ with sheaves of sections $\cF$, given a continuous map $\pi\co T\to S$, 
let $\pi_\ast F$ be the \'etal\'e space over $S$ associated to the sheaf $\pi_\ast\cF$ on $S$.

\begin{definition}\label{relative}Let $\pi\co T\to S$ be a continuous surjective map of profinite spaces.
\begin{enumerate}
\item For $A\in\Rho\text{-}\mod^\mathrm{dis}$, we let $\prod_{T/S}A_T:=\pi_\ast(A_T)\in\cE_S^\mathrm{dis}(\Rho)$.
\item For $A\in\Rho\text{-}\mod^\mathrm{pro}$, we let $\bigoplus_{T/S}A_T:=\pi_\ast(A^\vee_T)^\vee\in\cE_S^\mathrm{pro}(\Rho)$.
\end{enumerate}
\end{definition}

We have the following general lemma:

\begin{lemma}\label{restrictionclosed}For $M$ a discrete set and $C$ a closed subset of a profinite space $T$, 
let $\{U_\alpha\}_{\alpha\in\Alpha}$ be a fundamental system of clopen neighborhoods of $C$ in $T$. We then have:
\[\cC(C,M)=\colim_{\alpha\in\Alpha}\cC(U_\alpha,M).\]
\end{lemma}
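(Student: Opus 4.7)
The plan is to verify that the natural restriction map
\[\Theta\co\colim_{\alpha\in\Alpha}\cC(U_\alpha,M)\to\cC(C,M)\]
is a bijection, by checking surjectivity and injectivity separately. The whole argument rests on the fact that, for $M$ discrete, any continuous map from a profinite space to $M$ is locally constant with finite image, so the fibers form a finite clopen partition of the domain.

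For surjectivity, I would begin with $f\in\cC(C,M)$. Since $C$ is closed in $T$, it is profinite and compact, so $f$ has finite image $\{m_1,\dots,m_k\}$ and the fibers $C_i:=f^{-1}(m_i)$ form a partition of $C$ into pairwise disjoint subsets that are closed both in $C$ and in $T$. The key step is to extend this partition to a clopen partition of some clopen neighborhood of $C$ in $T$: invoking the standard separation property of profinite spaces recalled at the start of Section~2 (pairwise disjoint closed subsets can be separated by pairwise disjoint clopen subsets), I produce pairwise disjoint clopen $V_1,\dots,V_k\subseteq T$ with $C_i\subseteq V_i$. Set $V:=V_1\sqcup\cdots\sqcup V_k$; this is a clopen neighborhood of $C$, so $U_\alpha\subseteq V$ for some $\alpha$. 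The function $\td f\co V\to M$ equal to $m_i$ on $V_i$ is continuous and restricts to $f$ on $C$, and $\td f|_{U_\alpha}$ represents a preimage of $f$ in the colimit.

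For injectivity, suppose $f\co U_\alpha\to M$ and $g\co U_\beta\to M$ agree on $C$. Both are locally constant, so the equalizer $E:=\{x\in U_\alpha\cap U_\beta : f(x)=g(x)\}$ is clopen in $U_\alpha\cap U_\beta$: at any point where they agree (resp.\ disagree), both take a fixed value on a small clopen neighborhood, showing that $E$ and its complement in $U_\alpha\cap U_\beta$ are both open. Hence $E$ is a clopen neighborhood of $C$, and by the fundamental system property some $U_\gamma\subseteq E$, so $f$ and $g$ become identified in $\cC(U_\gamma,M)$.

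The only real obstacle is the separation step in the surjectivity proof, i.e.\ upgrading a finite clopen partition of $C$ to a clopen partition of a clopen neighborhood of $C$ inside $T$. This follows from iterating, or directly combining, the two basic facts about profinite spaces used repeatedly in Section~2: the existence of a clopen basis and the possibility of refining any open cover by a finite clopen partition. Everything else is formal once the local-constancy observation about maps to a discrete set is in hand.
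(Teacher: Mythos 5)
Your proof is correct, but it takes a genuinely different route from the paper's. The paper never argues pointwise: it writes $T=\lim_{\beta}T_\beta$ as a cofiltered limit of finite discrete spaces, observes that $\cC(C,M)=\colim_\beta\cC(C_\beta,M)$ and $\cC(U_\alpha,M)=\colim_\beta\cC(U_{\alpha\beta},M)$ for the images $C_\beta$, $U_{\alpha\beta}$ in $T_\beta$, notes that for fixed $\beta$ the sets $U_{\alpha\beta}$ form a fundamental system of neighborhoods of $C_\beta$ in the finite space $T_\beta$ (where the statement is trivial), and concludes by interchanging the two filtered colimits. You instead check directly that the restriction map $\colim_\alpha\cC(U_\alpha,M)\to\cC(C,M)$ is bijective: surjectivity by extending the finite clopen fiber partition of $C$ to a clopen partition of a clopen neighborhood (the separation of finitely many pairwise disjoint closed sets by pairwise disjoint clopen sets, which does follow by iterating the clopen-basis-plus-compactness argument you cite, so this is not a gap), and injectivity because the equalizer of two locally constant maps is an open neighborhood of $C$ and hence contains some $U_\gamma$. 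Your argument is more elementary and self-contained, making the role of local constancy and of the fundamental-system hypothesis explicit; the paper's argument is shorter on the page because it outsources the point-set work to the standard facts that a closed subset of $\lim T_\beta$ is the limit of its images and that continuous maps to a discrete set factor through a finite quotient, and it generalizes more readily to situations where one already has a compatible presentation of $T$, $C$ and the $U_\alpha$ as limits. One small point worth making explicit in your write-up: the colimit is taken over the $U_\alpha$ ordered by reverse inclusion, and the fundamental-system hypothesis is what makes this system filtered (any $U_\alpha\cap U_\beta$ is a neighborhood of $C$ and so contains some $U_\gamma$); you use this tacitly in the injectivity step.
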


\begin{proof}For $T$ a discrete finite set, the lemma is trivial. Let $\{T_\beta\}_{\beta\in\Beta}$ be a cofiltered diagram of finite discrete set 
such that $T=\lim_{\beta\in\Beta}T_{\beta}$. Let $C_{\beta}$ and $U_{\alpha\beta}$, for $\alpha\in\Alpha$, be the image of $C$ and $U_\alpha$, 
respectively, in $T_{\beta}$, for $\beta\in\Beta$. We have $C=\lim_{\beta\in\Beta}C_{\beta}$ and $U_\alpha=\lim_{\beta\in\Beta}U_{\alpha\beta}$.
In particular, there holds:
\[\cC(C,M)=\colim_{\beta\in\Beta}\cC(C_\beta,M)\hspace{0.4cm}\mbox{and}\hspace{0.4cm}\cC(U_\alpha,M)=\colim_{\beta\in\Beta}\cC(U_{\alpha\beta},M),
\mbox{ for all }\alpha\in\Alpha.\]

Note also that, for a fixed $\beta\in\Beta$, the set $\{U_{\alpha\beta}\}_{\alpha\in\Alpha}$ is a fundamental system of clopen neighborhoods 
of $C_\beta$ in $T_\beta$. Since filtered colimits commute with filtered colimits, we have:
\[\begin{split}
\colim_{\alpha\in\Alpha}\cC(U_\alpha,M)&=\colim_{\alpha\in\Alpha}\colim_{\beta\in\Beta}\cC(U_{\alpha\beta},M)=\\
&=\colim_{\beta\in\Beta}\colim_{\alpha\in\Alpha}\cC(U_{\alpha\beta},M)=\colim_{\beta\in\Beta}\cC(C_\beta,M)=\cC(C,M).
\end{split}\]
\end{proof}

We can now prove:

\begin{proposition}\label{relativestalk}Let $\pi\co T\to S$ be a continuous surjective map of profinite spaces.
\begin{enumerate}
\item For $A\in\Rho\text{-}\mod^\mathrm{dis}$, there is a natural isomorphism $(\prod_{T/S}A_T)_s\cong A^{\pi^{-1}(s)}$, for all $s\in S$.
\item For $A\in\Rho\text{-}\mod^\mathrm{pro}$, there is a natural isomorphism $(\bigoplus_{T/S}A_T)_s\cong A[[\pi^{-1}(s)]]$, for all $s\in S$.
\end{enumerate}
\end{proposition}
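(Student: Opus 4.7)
The plan is to prove (i) directly by unwinding the definition of the pushforward sheaf, and then to deduce (ii) from (i) via the Pontryagin duality for \'etal\'e spaces established in Theorem~\ref{Pontryagindualityspaces}, combined with the computation already carried out in the proof of Proposition~\ref{freesum}.

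For (i), I would begin by noting that the stalk of a pushforward is computed as
\[
\bigl(\pi_*(A_T)\bigr)_s = \colim_{U \ni s} \cC(\pi^{-1}(U), A),
\]
where $U$ ranges over open (equivalently, clopen, since $S$ is profinite) neighborhoods of $s$ in $S$. The crux is therefore to identify this colimit with $\cC(\pi^{-1}(s), A) = A^{\pi^{-1}(s)}$. This will be done in two steps. First, I would show that the family $\{\pi^{-1}(U) : U \text{ clopen neighborhood of } s\}$ is a fundamental system of clopen neighborhoods of $C := \pi^{-1}(s)$ inside $T$: given any open $V \supseteq C$, the set $T \smallsetminus V$ is closed hence compact, so its image $\pi(T \smallsetminus V)$ is closed in $S$ and misses $s$; choosing a clopen $U \ni s$ disjoint from $\pi(T \smallsetminus V)$ yields $\pi^{-1}(U) \subseteq V$. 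Second, Lemma~\ref{restrictionclosed} applied to the closed subset $C \subseteq T$ and the discrete set underlying $A$ identifies
\[
\colim_{U \ni s} \cC(\pi^{-1}(U), A) \cong \cC(C, A) = A^{\pi^{-1}(s)},
\]
which is the desired formula for (i).

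For (ii), the idea is purely formal once (i) is in hand. Unwinding the definition, $\bigoplus_{T/S}A_T = \pi_*(A^\vee_T)^\vee$, and by Proposition~\ref{Pontryagindualitydef} the Pontryagin dual commutes with taking stalks, so
\[
\bigl(\bigoplus_{T/S} A_T\bigr)_s \;\cong\; \bigl((\pi_*(A^\vee_T))_s\bigr)^\vee.
\]
Applying (i) to the discrete torsion module $A^\vee$ gives $(\pi_*(A^\vee_T))_s \cong (A^\vee)^{\pi^{-1}(s)}$. To finish, I would invoke the chain of natural isomorphisms exhibited in the proof of Proposition~\ref{freesum}, namely $((A^\vee)^X)^\vee \cong A[[X]]$ for any profinite space $X$, specialised to $X = \pi^{-1}(s)$. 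This yields the identification with $A[[\pi^{-1}(s)]]$ claimed in (ii).

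The main obstacle is the topological step in (i): showing that pulling back clopen neighborhoods of $s$ is cofinal among clopen neighborhoods of the fiber $\pi^{-1}(s)$. Everything else is formal manipulation of adjunctions, stalks, and Pontryagin duality, each already packaged in the cited lemmas and propositions. A minor side check is to confirm that $\pi_*(A^\vee_T)$ indeed lies in $\cS^\mathrm{dis}_S(\Rho)$ (so that Proposition~\ref{Pontryagindualitydef} applies to it), which follows from the stalk computation in (i) together with the fact that $A^\vee$ is discrete torsion.
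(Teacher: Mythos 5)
Your proof is correct and follows essentially the same route as the paper, which simply reduces (ii) to (i) by Pontryagin duality and derives (i) from Lemma~\ref{restrictionclosed}. The only added content is your explicit compactness argument that the sets $\pi^{-1}(U)$, for $U$ a clopen neighborhood of $s$, are cofinal among clopen neighborhoods of the fiber $\pi^{-1}(s)$ --- a detail the paper leaves implicit but which is exactly what is needed to invoke Lemma~\ref{restrictionclosed}.
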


\begin{proof}By Theorem~\ref{Pontryagindualityspaces} and Proposition~\ref{dualproduct}, it is enough to prove 
the first item of the proposition which immediately follows from Lemma~\ref{restrictionclosed}.
\end{proof}

The following theorem shows that we can associate a natural decomposition of free products
and sums to every continuous surjective map of profinite spaces:

\begin{theorem}\label{freedec}Let $\pi\co T\to S$ be a continuous surjective map of profinite spaces.
\begin{enumerate}
\item For $A\in\Rho\text{-}\mod^\mathrm{dis}$, there is a natural isomorphism: \[\prod_S\prod_{T/S}A_T\cong\prod_T A_T.\]
\item For $A\in\Rho\text{-}\mod^\mathrm{pro}$, there is a natural isomorphism: \[\bigoplus_S\bigoplus_{T/S}A_T\cong\bigoplus_T A_T.\]
\end{enumerate}
\end{theorem}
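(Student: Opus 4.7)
The plan is to reduce both items to the elementary identity $(\pi_{\ast}\cF)(S)=\cF(\pi^{-1}(S))=\cF(T)$ for a sheaf $\cF$ on $T$, then deduce the coproduct version from the product version by Pontryagin duality.

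First I would prove (i). By Proposition~\ref{H^0}, $\prod_S G$ is naturally isomorphic to the module of global sections $G(S)$ of any $G\in\Et_S^{\mathrm{dis}}(\Rho)$, and similarly $\prod_T A_T\cong A_T(T)$. Applying this to $G=\prod_{T/S}A_T=\pi_{\ast}(A_T)$ (Definition~\ref{relative}), whose sheaf of sections is $\pi_{\ast}\ul{A}_T$, gives
\[
\prod_S\prod_{T/S}A_T\;\cong\;(\pi_{\ast}\ul{A}_T)(S)\;=\;\ul{A}_T(\pi^{-1}(S))\;=\;\ul{A}_T(T)\;\cong\;\prod_T A_T.
\]
The chain of isomorphisms is natural in $A$ and in $\pi$, so (i) follows at once. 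No compactness or local-constancy argument is needed beyond what is already built into the definition of $\prod_{T/S}$.

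Next I would deduce (ii) from (i) by Pontryagin duality. By Definition~\ref{relative}, $\bigoplus_{T/S}A_T:=\pi_{\ast}(A_T^{\vee})^{\vee}$, so taking Pontryagin duals once more and using that $\Phi_S$ and $\Psi_S$ are mutually inverse (Theorem~\ref{Pontryagindualityspaces}) yields $(\bigoplus_{T/S}A_T)^{\vee}\cong\pi_{\ast}(A_T^{\vee})=\prod_{T/S}A_T^{\vee}$. Combining this with item (i) of Proposition~\ref{dualproduct}, I obtain
\[
\Bigl(\bigoplus_S\bigoplus_{T/S}A_T\Bigr)^{\!\vee}\;\cong\;\prod_S\Bigl(\bigoplus_{T/S}A_T\Bigr)^{\!\vee}\;\cong\;\prod_S\prod_{T/S}A_T^{\vee}.
\]
Applying (i) with $A$ replaced by $A^{\vee}\in\Rho\text{-}\mod^{\mathrm{dis}}$, this last term is naturally isomorphic to $\prod_T A_T^{\vee}$, which by Proposition~\ref{dualproduct} again is $(\bigoplus_T A_T)^{\vee}$. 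Dualizing back via the reflexivity of Pontryagin duality gives the desired isomorphism $\bigoplus_S\bigoplus_{T/S}A_T\cong\bigoplus_T A_T$.

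There is no serious obstacle; the only point that requires care is checking that all isomorphisms assembled above are natural and compatible, so that the final duality step legitimately produces an isomorphism of profinite $\Rho$-modules (and not merely an abstract one). The naturality of $\Phi_T,\Psi_T$ in Theorem~\ref{Pontryagindualityspaces}, together with the adjoint characterizations of $\prod_S$ and $\bigoplus_S$, handles this automatically.
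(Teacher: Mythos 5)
Your proposal is correct and follows essentially the same route as the paper: item (i) is the identity $(\pi_\ast\ul{A}_T)(S)=\ul{A}_T(T)$ read through Proposition~\ref{H^0} (the paper phrases this as $(p\circ\pi)_\ast=p_\ast\circ\pi_\ast$ for $p\co S\to\{\ast\}$, which is the same computation), and item (ii) is obtained from (i) by Pontryagin duality via Proposition~\ref{dualproduct}, exactly as in the paper.
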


\begin{proof}(i): Let $p\co S\to\{\ast\}$ be the map to the one-point space. Then, by Proposition~\ref{H^0}, we have 
$\prod_T A_T=(p\circ\pi)_\ast(\ul{A})(\{\ast\})$ and $\prod_S\prod_{T/S}A_T=p_\ast(\pi_\ast(\ul{A}))(\{\ast\})$. 
Thus, the claim follows from the functoriality of push-forward of sheaves, i.e.\ $(p\circ\pi)_\ast=p_\ast\circ\pi_\ast$.
\smallskip

\noindent
(ii): By Proposition~\ref{dualproduct}, there is a natural isomorphism $(\prod_T A_T^\vee)^\vee\cong\bigoplus_T A_T$
and a series of natural isomorphisms:
\[(\prod_S\prod_{T/S}A_T^\vee)^\vee\cong\bigoplus_S(\prod_{T/S}A_T^\vee)^\vee\cong\bigoplus_S\bigoplus_{T/S}A_T.\]
The conclusion then follows from the previous item.
\end{proof}


\section{A Lannes-Quillen Theorem for mod-$p$ cohomology of profinite groups}
\subsection{A generalization of Theorem~\ref{Lannesfinite} to arbitrary profinite groups}
Let $\hom(V,G)$ be the profinite $G$-set of homomorphisms from an elementary abelian $p$-group $V$ to the profinite group $G$
and let $\Rep(V,G):=\hom(V,G)/G$ be the set of $G$-conjugacy classes. This is also a profinite space and the natural projection
$\pi\co\hom(V,G)\to\Rep(V,G)$ is continuous. 
 
Let $\cC(\hom(V,G),\F_p)$ be the set of continuous maps from the profinite set $\hom(V,G)$ to the finite discrete set $\F_p$.
This has a natural structure of (continuous) $\F_p[[G]]$-module.
In the proof of \cite[Proposition~4.1]{Symonds}, Symonds shows that there is a natural isomorphism:
\begin{equation}\label{Sym}
T_V H^\bullet(G)\cong H^\bullet(G;\cC(\hom(V,G),\F_p)).
\end{equation}

Thanks to the results of the previous sections, we can deduce from Symonds' isomorphism~\eqref{Sym} 
a version of Theorem~\ref{Lannesfinite} for profinite groups:

\begin{theorem}\label{Lannesprofinite}For $G$ a profinite group and $V$ an elementary abelian $p$-group, 
let us denote by $\pi\co\hom(V,G)\to\Rep(V,G)$ the orbit map.
There is then an \'etal\'e space $(F,q)$ of discrete $\F_p[[G]]$-modules over the profinite space $\Rep(V,G)$, 
whose fiber $F_{[\rho]}$, for all $[\rho]\in\Rep(V,G)$, is naturally isomorphic to the coinduced $G$-module 
$\cC(\pi^{-1}([\rho]),\F_p)\cong\coind^G_{C_G(\rho(V))}\F_p$, so that there holds:
\begin{equation}\label{inducedmod}
\cH^\bullet(G;F)_{[\rho]}=H^\bullet(G;F_{[\rho]})=H^\bullet(G;\cC(\pi^{-1}([\rho]),\F_p))\cong H^\bullet(C_G(\rho(V));\F_p),
\end{equation}
and there is a natural isomorphism of discrete $\F_p$-vector spaces:
\begin{equation}\label{Lannesisopro}
T_V H^\bullet(G)\cong\prod_{\Rep(V,G)}\cH^\bullet(G;F)=
\sideset{}{^{\mathrm{top}}}\prod_{\hspace{0.4cm}[\rho]\in\Rep(V,G)}H^\bullet(G;\cC(\pi^{-1}([\rho]),\F_p)).
\end{equation}
\end{theorem}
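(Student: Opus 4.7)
The approach is to promote Symonds' isomorphism~\eqref{Sym} to a statement about étalé spaces over $\Rep(V,G)$, using the relative product construction of Section~\ref{directsumprod} and the Tensor--Hom compatibilities of Theorem~\ref{compatibility}.

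The orbit map $\pi\co\hom(V,G)\to\Rep(V,G)$ is a continuous surjection of profinite spaces which is $G$-equivariant for the trivial action on $\Rep(V,G)$. Following Definition~\ref{relative}(i), I set
\[F:=\prod\nolimits_{\hom(V,G)/\Rep(V,G)}(\F_p)_{\hom(V,G)}=\pi_\ast(\F_p)_{\hom(V,G)}.\]
The $G$-equivariance of $\pi$ endows the sheaf of sections of $F$ with an action of $\F_p[[G]]$, so that $F\in\cE^{\mathrm{dis}}_{\Rep(V,G)}(\F_p[[G]])$. By Proposition~\ref{relativestalk}(i), $F_{[\rho]}\cong\cC(\pi^{-1}([\rho]),\F_p)$ as $\F_p[[G]]$-modules. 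Since the stabiliser of $\rho$ under conjugation is $C_G(\rho(V))$, the fibre $\pi^{-1}([\rho])$ is isomorphic to $G/C_G(\rho(V))$ as a continuous $G$-set, so $\cC(\pi^{-1}([\rho]),\F_p)\cong\cC(G/C_G(\rho(V)),\F_p)$ is the continuous coinduction of the trivial $C_G(\rho(V))$-module. Shapiro's lemma for profinite groups, together with the stalk formula~\eqref{stalkcohomology}, then yields
\[\cH^\bt(G;F)_{[\rho]}\cong H^\bt(G;F_{[\rho]})\cong H^\bt(C_G(\rho(V));\F_p),\]
proving~\eqref{inducedmod}.

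Next, I identify the global product of $F$ with Symonds' coefficient module. By Theorem~\ref{freedec}(i) applied to $\pi$ with coefficient $\F_p$, followed by Corollary~\ref{freeproddescr},
\[\prod\nolimits_{\Rep(V,G)}F\;\cong\;\prod\nolimits_{\hom(V,G)}(\F_p)_{\hom(V,G)}\;\cong\;\cC(\hom(V,G),\F_p),\]
and naturality of push-forward makes this isomorphism $G$-equivariant as discrete $\F_p[[G]]$-modules.

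Finally, I commute cohomology with the product functor. Applying Theorem~\ref{compatibility}(i) termwise to the homogeneous bar resolution $\cB_\bt(G)$ of $\F_p$ yields a natural isomorphism of cochain complexes
\[\hom_{\F_p[[G]]}\bigl(\cB_\bt(G),\textstyle\prod_{\Rep(V,G)}F\bigr)\;\cong\;\prod\nolimits_{\Rep(V,G)}\chom_{\F_p[[G]]}\bigl(\cB_\bt(G)_{\Rep(V,G)},F\bigr).\]
Because the functor $\prod_{\Rep(V,G)}$ is exact (Corollary~\ref{prodexact}), it commutes with taking cohomology; combined with the previous step and~\eqref{Sym} this produces
\[T_VH^\bt(G)\;\cong\;H^\bt(G;\cC(\hom(V,G),\F_p))\;\cong\;\prod\nolimits_{\Rep(V,G)}\cH^\bt(G;F),\]
which, in view of the stalk identification of the first paragraph, is exactly~\eqref{Lannesisopro}. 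The main obstacle will be the careful bookkeeping of $G$-equivariance across the constructions of Section~\ref{directsumprod} (notably verifying that Theorem~\ref{freedec}(i) respects the $G$-actions induced by the equivariant map $\pi$) and invoking Shapiro's lemma in the correct continuous/profinite formulation, since $C_G(\rho(V))$ need not be open or of finite index in $G$.
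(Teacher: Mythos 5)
Your proposal is correct and follows essentially the same route as the paper: the paper's coefficient space $F=\chom_{\F_p}(\bigoplus_{\hom(V,G)/\Rep(V,G)}(\F_p)_{\hom(V,G)},(\F_p)_{\Rep(V,G)})$ coincides, by Definition~\ref{relative} and Pontryagin duality (using $\F_p^\vee\cong\F_p$), with your $\pi_\ast(\F_p)_{\hom(V,G)}$, and both arguments rest on the same pillars, namely Proposition~\ref{relativestalk} plus Shapiro's lemma for the stalks, Theorem~\ref{compatibility}(i) together with the exactness of $\prod_{\Rep(V,G)}$ (Corollary~\ref{prodexact}) to commute cohomology with the product, and Symonds' isomorphism~\eqref{Sym}. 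The only cosmetic difference is that you identify $\prod_{\Rep(V,G)}F\cong\cC(\hom(V,G),\F_p)$ directly via Theorem~\ref{freedec}(i) and Corollary~\ref{freeproddescr} on the discrete side, where the paper dualizes the free-sum decomposition~\eqref{orbitdec} through Theorem~\ref{compatibility}(ii).
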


\begin{remark}\label{moreprecise} $\cH^\bullet(G;F)$ is an \'etal\'e space of unstable algebras over the mod-$p$ Steenrod algebra. 
By the universal property of the product, its product $\prod_{\Rep(V,G)}\cH^\bullet(G;F)$ then inherits such a structure,
so that \eqref{Lannesisopro} is an isomorphisms of unstable algebras over the mod-$p$ Steenrod algebra.
\end{remark}

\begin{proof}By Proposition~\ref{freesum} and item (ii) of Theorem~\ref{freedec}, there is a natural isomorphism of profinite $\F_p[[G]]$-modules:
\begin{equation}\label{orbitdec} 
\F_p[[\hom(V,G)]]\cong\bigoplus_{\Rep(V,G)}\bigoplus_{\hspace{0.4cm}\hom(V,G)\left/\Rep(V,G)\right.}(\F_p)_{\hom(V,G)}.
\end{equation}

 Put, for simplicity, $ \F_p[[\hom(V,G)\left/\Rep(V,G)\right.]]:=\bigoplus_{\hom(V,G)\left/\Rep(V,G)\right.}(\F_p)_{\hom(V,G)}$.
 By Proposition~\ref{relativestalk}, for all $[\rho]\in\Rep(V,G)$, there holds: 
 \[\begin{split}
 \F_p[[\hom(V,G)\left/\Rep(V,G)\right.]]_{[\rho]}&=\F_p[[\pi^{-1}([\rho])]]\cong\\
 &\cong\F_p[[G/C_G(\rho(V))]]=\ind^G_{C_G(\rho(V))}\F_p.
 \end{split}\]

Thus, $F:=\chom_{\F_p}(\F_p[[\hom(V,G)\left/\Rep(V,G)\right.]],(\F_p)_{\Rep(V,G)})$ is an an \'etal\'e space of discrete 
$\F_p[[G]]$-modules over the profinite space $\Rep(V,G)$, with the property that, for all $[\rho]\in\Rep(V,G)$, there holds:
\[\begin{split}
F_{[\rho]}&=\chom_{\F_p}(\F_p[[\hom(V,G)\left/\Rep(V,G)\right.]],(\F_p)_{\Rep(V,G)})_{[\rho]}=\\
&=\hom_{\F_p}(\F_p[[\hom(V,G)\left/\Rep(V,G)\right.]]_{[\rho]},\F_p)\cong\coind^G_{C_G(\rho(V))}\F_p.
 \end{split}\]
Then:
\[\cH^\bullet(G;F)_{[\rho]}=H^\bullet(G;F_{[\rho]})\cong H^\bullet(G;\coind^G_{C_G(\rho(V))}\F_p)\cong H^\bullet(C_G(\rho(V));\F_p),\]
which proves the first part of the theorem.

By the isomorphism~\eqref{orbitdec} and (ii) of Theorem~\ref{compatibility}, there is a natural isomorphism:
\[\hom_{\F_p}( \F_p[[\hom(V,G)]],\F_p)\cong \prod_{\Rep(V,G)}\chom_{\F_p}( \F_p[[\hom(V,G)\left/\Rep(V,G)\right.]],(\F_p)_{\Rep(V,G)}).\]
By (i) of Theorem~\ref{compatibility} and the fact that the functor $\prod_{\Rep(V,G)}$ is exact (cf.\ Corollary~\ref{prodexact}),
there is then a series of natural isomorphisms:
\[\begin{split}
&H^\bullet(G;\hom_{\F_p}( \F_p[[\hom(V,G)]],\F_p)\cong\\
&H^\bullet(G;\prod_{\Rep(V,G)}\chom_{\F_p}( \F_p[[\hom(V,G)\left/\Rep(V,G)\right.]],(\F_p)_{\Rep(V,G)}))\cong\\
&\cong\prod_{\Rep(V,G)}\cH^\bullet(G;\chom_{\F_p}(\F_p[[\hom(V,G)\left/\Rep(V,G)\right.]],(\F_p)_{\Rep(V,G)})).
\end{split}\]
This isomorphism, together with the isomorphism~\eqref{Sym}, now implies the isomorphism~\eqref{Lannesisopro}.
\end{proof}

Let us denote by $\hom(V,G)_i$ the subset of $\hom(V,G)$ consisting of the homomorphisms whose image has rank $i\leq\rk V$
and let $\Rep(V,G)_i$ be the image of $\hom(V,G)_i$ in $\Rep(V,G)$, for $i=0,\dots,\rk V$. It is easy to check that $\hom(V,G)_i$ and 
$\Rep(V,G)_i$ are clopen subsets of the profinite spaces $\hom(V,G)$ and $\Rep(V,G)$, respectively, so that there is a decomposition 
in disjoint clopen subsets:
\[\Rep(V,G)=\coprod_{i=0}^{\rk V}\Rep(V,G)_i.\]
By Theorem~\ref{Lannesprofinite}, Corollary~\ref{proddecomposition} and the definition of the reduced $T$-functor (cf.\ Section~\ref{defreduced}), 
we then have a series of natural isomorphisms of unstable modules:
\[H^\bullet(G)\oplus\bT_V H^\bullet(G)\cong T_V H^\bullet(G)\cong H^\bullet(G)\oplus\prod_{i=1}^{\rk V}
\sideset{}{^{\mathrm{top}}}\prod_{\hspace{0.4cm}[\rho]\in\Rep(V,G)_i}H^\bullet(G;\cC(\pi^{-1}([\rho]),\F_p)).\]

Since, in the above series of isomorphisms, $H^\bullet(G)$ is mapped identically onto itself, it follows:

\begin{corollary}\label{Lannesprofinitereduced}With the notations of Theorem~\ref{Lannesprofinite}, there is a natural isomorphism:
\[\bT_V H^\bullet(G)\cong\prod_{i=1}^{\rk V}
\sideset{}{^{\mathrm{top}}}\prod_{\hspace{0.4cm}[\rho]\in\Rep(V,G)_i}H^\bullet(G;\cC(\pi^{-1}([\rho]),\F_p)).\]
\end{corollary}

Note that, for a Lannes group $G$, there is, in particular, an isomorphism of $p$-Boolean algebras (cf.\ \cite[Theorem~3.10.2]{Schwartz}):
\begin{equation}\label{T^0}
T^0_V H^\bt(G)\cong\map(\Rep(V,G),\F_p),
\end{equation}
where, for two given sets $X$ and $Y$, we denote by $\map(X,Y)$ the set of maps from $X$ to $Y$.
In the case of a general profinite group $G$, we need instead to restrict to continuous maps:

\begin{corollary}\label{degree0iso}For a profinite group $G$ and an elementary abelian $p$-group $V$, there is a natural isomorphism
of discrete $\F_p$-vector spaces:
\[T^0_V H^\bt(G)\cong\cC(\Rep(V,G),\F_p)={\F_p}^{\Rep(V,G)}.\]
\end{corollary}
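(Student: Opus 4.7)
The plan is to derive the corollary by specializing Symonds' isomorphism~\eqref{Sym} (which underlies Theorem~\ref{Lannesprofinite}) to cohomological degree zero and then identifying the resulting space of $G$-invariants with continuous functions on the orbit space $\Rep(V,G)$.

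First, I would apply~\eqref{Sym} in degree zero to obtain a natural isomorphism of discrete $\F_p$-vector spaces
\[T_V H^0(G) \;\cong\; H^0\bigl(G;\,\cC(\hom(V,G),\F_p)\bigr).\]
Since $H^0(G;-)$ computes the $G$-invariants of a discrete $G$-module and $G$ acts on $\cC(\hom(V,G),\F_p)$ through its conjugation action on $\hom(V,G)$, the right-hand side equals $\cC(\hom(V,G),\F_p)^G$.

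Second, I would identify $\cC(\hom(V,G),\F_p)^G$ with $\cC(\Rep(V,G),\F_p)$ via pullback along the orbit projection $\pi\co \hom(V,G)\to\Rep(V,G)$. A $G$-invariant function on $\hom(V,G)$ factors set-theoretically and uniquely through $\pi$; continuity of the induced function on $\Rep(V,G)$ follows because the profinite topology on $\Rep(V,G)$ is the quotient topology inherited from the $G$-action on $\hom(V,G)$ (equivalently, $\pi$ is a continuous surjection of compact Hausdorff spaces, hence a topological quotient, and $\F_p$ is discrete).

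As an alternative route, one could deduce the statement directly from Theorem~\ref{Lannesprofinite} together with Corollary~\ref{freeproddescr}: the cohomology \'etal\'e space $\cH^0(G;F)$ has fibers $H^0(G;F_{[\rho]})=H^0(C_G(\rho(V));\F_p)=\F_p$, one verifies locally that it is the trivial \'etal\'e space $(\F_p)_{\Rep(V,G)}$, and then $\prod^\mathrm{top}_{[\rho]\in\Rep(V,G)}\F_p = \prod_{\Rep(V,G)}(\F_p)_{\Rep(V,G)}\cong \cC(\Rep(V,G),\F_p)$ by Corollary~\ref{freeproddescr}. The only real (and quite mild) obstacle in either approach is the topological identification of $G$-invariant continuous functions with continuous functions on $\Rep(V,G)$; no substantial new argument beyond quotient-topology formalities is needed, since both the hard representation-theoretic input and the exactness of $\prod_T$ are already provided by Theorem~\ref{Lannesprofinite} and Corollary~\ref{prodexact}.
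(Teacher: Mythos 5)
Your proposal is correct and follows the paper's own proof in essentials: the paper likewise notes that the statement is an immediate consequence of Theorem~\ref{Lannesprofinite} and Corollary~\ref{freeproddescr}, and then gives the direct argument from Symonds' isomorphism~\eqref{Sym}, identifying $T_VH^0(G)$ with $\cC(\hom(V,G),\F_p)^G\cong\cC(\Rep(V,G),\F_p)$ exactly as you do. The only (harmless) difference lies in how that last identification is justified: you invoke the fact that the orbit map $\pi$ is a continuous surjection of compact Hausdorff spaces, hence a topological quotient, whereas the paper writes $G=\lim_{\alpha}G_\alpha$ over finite quotients and passes to the colimit of the finite-level identifications $\cC(\hom(V,G_\alpha),\F_p)^{G}\cong\cC(\Rep(V,G_\alpha),\F_p)$; both routes are valid.
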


\begin{proof}This is an immediate consequence of Theorem~\ref{Lannesprofinite} and Corollary~\ref{freeproddescr}, but we can
also deduce it directly from Symonds' isomorphism~\eqref{Sym} as follows. 

By~\eqref{Sym}, there is a natural isomorphism $T^0_V H^\bt(G)\cong\cC(\hom(V,G),\F_p)^G$.
We then just need to describe the space of $G$-invariants $\cC(\hom(V,G),\F_p)^G$ more explicitly.
Let $G$ be the cofiltered limit of a diagram of finite groups $\{G_\alpha\}_{\alpha\in\Alpha}$. Note that
$\hom(V,G_\alpha)$ and $\Rep(\hom(V,G_\alpha))$ are discrete finite sets.
There is then a series of natural isomorphisms:
\[\begin{split}
\cC(\hom(V,G),\F_p)^G&\cong(\colim\limits_{\alpha\in\Alpha}\cC(\hom(V,G_\alpha),\F_p))^G\cong
\colim\limits_{\alpha\in\Alpha}\cC(\hom(V,G_\alpha),\F_p)^G\cong\\
&\cong\colim\limits_{\alpha\in\Alpha}\cC(\Rep(\hom(V,G_\alpha)),\F_p)\cong\cC(\Rep(V,G),\F_p).
\end{split}\]
\end{proof}

\section{Finite $p$-subgroups of Lannes groups}
In this section, we apply the results of the previous section to the problem of conjugacy separability of $p$-torsion elements
and finite $p$-subgroups of a Lannes group $G$ (cf.\ Definition~\ref{Lannesgroup}). For a given group $G$, we denote by 
$\cS_p(G)$ the category of finite $p$-subgroups of $G$, where the morphisms are induced by inclusion and conjugation by elements of $G$. 

A morphism of groups $f\co G\to L$ determines a functor $f_p\co\cS_p(G)\to\cS_p(L)$. The latter is
fully faithful if and only if the following two conditions are satisfied:
\begin{itemize}
\item $f_p$ is essentially injective, that is to say, given two finite $p$-subgroups $A_1$ and $A_2$ of $G$, the subgroups
$f(A_1)$ and $f(A_2)$ are conjugated in $L$ if and only if $A_1$ and $A_2$ are conjugated in $G$;
\item given a finite $p$-subgroup $A$ of $G$, the natural homomorphism $N_G(A)\to N_L(f(A))$ induces an isomorphism between the images 
of the homomorphisms, induced by conjugation, $N_G(A)\to\aut(A)$ and $N_L(f(A))\to\aut(f(A))$.
\end{itemize}

Since $\cS_p(G)$ and $\cS_p(L)$ are small categories, the functor $f_p\co\cS_p(G)\to\cS_p(L)$ is an equivalence if, 
besides the above two conditions, it satisfies the condition:
\begin{itemize}
\item $f_p$ is essentially surjective, that is to say, every  finite $p$-subgroup $B$ of $L$ is conjugated 
to the image $f(A)$ of an elementary abelian $p$-subgroup $A$ of $G$. 
\end{itemize}

We then let $\cA_p$ be the category of elementary abelian $p$-groups and let $\cA_p(G)$ be the full subcategory of $\cS_p(G)$ 
whose objects are the elementary abelian $p$-subgroups of $G$.

Note that if $L$ is a profinite group, the object sets of $\cS_p(L)$ and $\cA_p(L)$ (resp.\ of their quotients by conjugation $\cS_p(L)/L$ and 
$\cA_p(L)/L$) are all naturally endowed with the topology induced by their embeddings in the profinite space of closed subgroups of $L$ 
(resp.\ the profinite space of conjugacy classes of closed subgroups of $L$). 

The following lemma is probably well known to experts (at least, it is used implicitly in \cite{Symonds}). 
We include it for lack of suitable references and convenience of the reader:

\begin{lemma}\label{Rep}Let $f_V\co\Rep(V,G)\to\Rep(V,L)$, for $V\in\cA_p$, be the natural map induced by the homomorphism $f\co G\to L$. There holds:
\begin{enumerate}
\item The functor $f_p\co\cA_p(G)\to\cA_p(L)$ is fully faithful if and only if the natural map $f_V\co\Rep(V,G)\to\Rep(V,L)$ is injective
for all $V\in\cA_p$.
\item If $L$ is a profinite group, then the functor $f_p\co\cA_p(G)\to\cA_p(L)$ has dense image if and only if 
the map $f_V\co\Rep(V,G)\to\Rep(V,L)$ has dense image, for all $V\in\cA_p$.
\end{enumerate}
\end{lemma}

\begin{proof}(i): The forward implication is immediate. Let us then show that, if the map $f_V\co\Rep(V,G)\to\Rep(V,L)$ is injective, 
for all $V\in\cA_p$, the functor $f_p\co\cA_p(G)\to\cA_p(L)$ is fully faithful. It is clear that the injectivity of $f_V$, for all $V\in\cA_p$, 
implies that the functor $f_p\co\cA_p(G)\to\cA_p(L)$ is essentially injective. Let us then show that the injectivity of $f_V$, for all $V\in\cA_p$, 
implies that $f_p$ also satisfies the second condition above and so is fully faithful. 

Let $O_{[\rho]}$ and $O_{f_V([\rho])}$ be, respectively, the $\aut(V)$-orbits of an element 
$[\rho]\in\Rep(V,G)$ and of $f_V([\rho])\in\Rep(V,L)$. Since the map $f_V\co\Rep(V,G)\to\Rep(V,L)$ is $\aut(V)$-equivariant, 
by restriction, we get a natural surjective map $f_{V,[\rho]}\co O_{[\rho]}\to O_{f_V([\rho])}$. By hypothesis, this map is also injective and so bijective.

Let us denote by $\aut(V)\cdot\rho$ and $\aut(V)\cdot (f\circ\rho)$, respectively, the $\aut(V)$-orbits of $\rho\in\hom(V,G)$ and of $ f\circ\rho\in\hom(V,L)$. 
and let $\eta\co N_G(\rho(V))\to\aut(\rho(V))$ and $\mu\co N_L(f(V))\to\aut(f(\rho(V)))$ be the natural representations induced by conjugation.
We then have:
\[O_{[\rho]}\cong\aut(V)\cdot\rho\left/\eta(N_G(\rho(V)))\right.\hspace{0.2cm}\mbox{and}
\hspace{0.2cm}O_{f_V([\rho])}\cong\aut(V)\cdot (f\circ\rho)\left/\mu(N_L(f(\rho(V)))\right..\]

Let $\rho\co V\to G$ be an injective homomorphism. Then, from the injectivity of $f_V$, it easily follows that the composition $f\circ\rho\co V\to L$ 
is also injective. This implies that $\aut(V)$ acts with trivial stabilizers on both $\rho\in\hom(V,G)$ and $f\circ\rho\in\hom(V,L)$ and that $\eta(N_G(\rho(V)))$ 
and $\mu(N_L(f(\rho(V)))$ act freely on the respective orbits. 
Since the natural map $f_{V,[\rho]}\co O_{[\rho]}\to O_{f_V([\rho])}$ is bijective, it follows that 
the natural homomorphism of finite groups $\eta(N_G(\rho(V)))\to\mu(N_L(f(\rho(V)))$ is bijective and then an isomorphism.
This is enough to prove our claim about the functor $f_p$.
\smallskip

\noindent
(ii): For a group $G$ (abstract or profinite) and $k\in\N$, let $\cA_p^{\leq k}(G)$ be the full subcategory of $\cA_p(G)$ whose objects are 
the elementary abelian $p$-subgroups of $G$ of rank $\leq k$. Let us observe that, if $L$ is a profinite group, then the object set of
$\cA_p^{\leq k}(L)$ is a profinite space and so, in particular, a compact subset of the profinite space of all closed subgroups of $L$. 
The object set of $\cA_p(L)$ is then the union of all the object sets of the subcategories $\cA_p^{\leq k}(G)$, for $k\geq 0$, 
it is endowed with the associated final (colimit) topology and, for this topology, the object set of $\cA_p^{\leq k}(G)$ is a clopen subset. 

Let us denote by $f_p^{\leq k}\co\cA_p^{\leq k}(G)\to\cA_p^{\leq k}(L)$ the corresponding restriction of $f_p$. 
It is clear that the map $f_V\co\Rep(V,G)\to\Rep(V,L)$ has dense image if and only if the natural map $\tilde f_V\co\hom(V,G)\to\hom(V,L)$ has 
dense image. In order to prove the second item of the lemma, it is then enough to show that the map $\tilde f_V\co\hom(V,G)\to\hom(V,L)$ has 
dense image if and only if, for $k=\rk(V)$, the functor $f_p^{\leq k}\co\cA_p^{\leq k}(G)\to\cA_p^{\leq k}(L)$ has dense image. 

There are maps $\iota_V\co\hom(V,G)\to\Ob(\cA_p^{\leq k}(G))$ and $\iota_V'\co\hom(V,L)\to\Ob(\cA_p^{\leq k}(L))$, defined assigning 
to a homomorphism its image, such that $\iota_V'$ is continuous and there holds $\iota_V'\circ\tilde f_V=f_p^{\leq k}\circ\iota_V$. 
This immediately implies that, if $\tilde f_V$ has dense image, then $f_p^{\leq k}$ has also dense image. The other implication
follows from the observation that both maps $\iota_V$ and $\iota_V'$ have finite fibers and that the restriction of $\tilde f_V$ on these
fibers is surjective. Then, the subset $\Im\tilde f_V$ of $\hom(V,L)$ is dense if and only if its image in $\Ob(\cA_p^{\leq k}(L))$ by $\iota_V'$,
which coincides with $\Im f_p^{\leq k}$, is dense.
\end{proof}

Let us recall that a continuous homomorphism $f\co G\to L$ between topological groups, for $M$ a topological $L$-module, induces a 
homomorphism $f^\ast\co C^\bt(L,M)\to C^\bt(G,M)$ between the associated complexes of continuous cochains and then a homomorphism 
on continuous cohomology $f^\ast\co H^i(L;M)\to H^i(G;M)$. In case $G$ is an abstract group, we may endow $G$ with the discrete topology, 
so that its continuous cohomology coincides with the ordinary cohomology of the group and every homomorphism $f\co G\to L$ is continuous.
In what follows, we work, as usual, with constant $\F_p$-coefficients and assume that all homomorphisms are continuous. 
The main result of this section is then the following:

\begin{theorem}\label{separability}Let $f\co G\to L$ be a homomorphism from a Lannes group to a profinite group. We have: 
\begin{enumerate}
\item If the map induced on cohomology $f^\ast\co H^i(L;\F_p)\to H^i(G;\F_p)$ is surjective for $i\gg 0$, then the 
induced functor $f_p\co\cA_p(G)\to\cA_p(L)$ is fully faithful.
\item If the map induced on cohomology $f^\ast\co H^i(L;\F_p)\to H^i(G;\F_p)$ is injective for $i\gg 0$, then the 
induced functor $f_p\co\cA_p(G)\to\cA_p(L)$ has dense image.
\item If the map induced on cohomology $f^\ast\co H^i(L;\F_p)\to H^i(G;\F_p)$ is an isomorphism for $i\gg 0$ and
the group $G$ has only finitely many conjugacy classes of elementary abelian $p$-subgroups of fixed order $p^k$, for all $k\geq 0$, then the 
induced functor $f_p\co\cA_p(G)\to\cA_p(L)$ is an equivalence of categories.
\end{enumerate}
\end{theorem}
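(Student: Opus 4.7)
All three parts will be deduced from the single technical fact that, under either of the two hypotheses on $f^*$, the degree-zero part $(T_Vf^*)^0\colon (T_VH^\bt(L))^0\to(T_VH^\bt(G))^0$ inherits the corresponding property (surjectivity or injectivity) for every elementary abelian $p$-group $V$. The preliminary set-theoretic reformulation is that $f_p$ is essentially injective and fully faithful (respectively, has dense image) if and only if the induced map $\Rep(V,f)\colon\Rep(V,G)\to\Rep(V,L)$ is injective (respectively, has dense image) for every such $V$. This follows from the fact that the fiber of the natural ``image'' map $\Rep(V,G)\to\cA_p(G)/G$ over $[A]$, restricted to injective homomorphisms, is $W_G(A)\backslash\aut(A)$ with $W_G(A):=N_G(A)/C_G(A)$, so injectivity of $\Rep(V,f)$ for all $V$ simultaneously distinguishes conjugacy classes of elementary abelian $p$-subgroups, forbids an elementary abelian subgroup from being killed by $f$, and preserves Weyl groups.

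\textbf{Core step.} Fix $V$ and consider $T_Vf^*$. Since $f^*$ is the identity in degree zero (both $H^0$'s equal $\F_p$), the cokernel $\coker f^*$ (respectively, kernel $\ker f^*$) vanishes in degree zero, and under the hypothesis of (i) (respectively, (ii)) it is concentrated in a finite range of strictly positive degrees. Any unstable module concentrated in finitely many degrees is locally finite, because the admissible Steenrod monomials of bounded total degree form a finite set, so every element generates a finite unstable submodule. Combining the three properties of $T_V$ listed in the introduction (exactness, being the identity on any unstable module that is a union of its finite unstable submodules, and, as a left adjoint, commutation with filtered colimits), one deduces that $T_V$ acts as the identity on $\coker f^*$ and on $\ker f^*$. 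Exactness of $T_V$ then yields $\coker(T_Vf^*)=\coker f^*$ and $\ker(T_Vf^*)=\ker f^*$, still concentrated in strictly positive degrees, so $(T_Vf^*)^0$ is surjective in case~(i) and injective in case~(ii).

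\textbf{Translation and conclusion of (i), (ii), (iii).} By Theorem~\ref{Lannesfinite} applied to the Lannes group $G$, one has $(T_VH^\bt(G))^0\cong\F_p^{\Rep(V,G)}$ (the space of \emph{all} functions), and by Corollary~\ref{degree0iso} applied to the profinite group $L$, $(T_VH^\bt(L))^0\cong\cC(\Rep(V,L),\F_p)$; under these identifications $(T_Vf^*)^0$ is pullback along $\Rep(V,f)$. Surjectivity of the pullback forces $\Rep(V,f)$ injective: for distinct $[\rho_1]\neq[\rho_2]$ with $[f\rho_1]=[f\rho_2]$ the characteristic function $\chi_{[\rho_1]}$ cannot be hit, since any pullback is constant on fibers of $\Rep(V,f)$. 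Injectivity of the pullback forces $\Rep(V,f)$ to have dense image: otherwise the characteristic function of a clopen neighborhood of a point outside the closure of the image (such clopens exist since $\Rep(V,L)$ is profinite) would be a non-zero kernel element. This proves (i) and (ii). For (iii), the finiteness hypothesis makes $\Rep(V,G)$ finite for every $V$ (finitely many conjugacy classes $[A]$ of each order and a finite fiber over each), so the image of $\Rep(V,f)$ is finite and hence closed in the profinite space $\Rep(V,L)$; combined with density from (ii), it equals $\Rep(V,L)$, so $\Rep(V,f)$ is bijective, upgrading the equivalence of (i) to an equivalence of categories.

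\textbf{Main obstacle.} The substance is the core step, where the high-degree hypothesis on $f^*$ must be converted into information about $(T_Vf^*)^0$; this relies crucially on the combination of the three stated properties of $T_V$ with the elementary observation that unstable modules concentrated in finitely many degrees are automatically locally finite. The preliminary fiber analysis of $\Rep(V,G)\to\cA_p(G)/G$ must be handled carefully (in particular, the case when $f$ is non-injective on an elementary abelian subgroup must be excluded by running the injectivity argument for $V$ of small rank first), but everything else is a formal unpacking of the degree-zero Lannes isomorphism.
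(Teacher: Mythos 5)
Your proposal is correct and follows essentially the same route as the paper: the core step is the paper's Lemma~\ref{degree0} (exactness of $T_V$ plus $T_V$ acting as the identity on the kernel/cokernel of $f^\ast$), the translation to injectivity, respectively density, of $\Rep(V,G)\to\Rep(V,L)$ via Corollary~\ref{degree0iso} and the Lannes isomorphism is identical, and your fiber analysis over $\cA_p(G)/G$ is just a repackaging of the paper's $\aut(V)$-orbit argument for full faithfulness. If anything, your observation that the cokernel of $f^\ast$ is merely concentrated in finitely many degrees, hence locally finite (rather than literally finite, as the paper's Lemma~\ref{degree0} asserts), is the more careful justification of the key identity $T_VC^\bt=C^\bt$.
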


\begin{remark}The proof of Theorem~\ref{separability} only uses the fact that the group $G$ satisfies the isomorphism~\eqref{T^0}.
However, we do not know any example of a group, which is not a Lannes group, but for which this isomorphism holds. So we do not know
whether asking that only this condition be satisfied by $G$ would yield a substantial generalization of the theorem.
\end{remark} 

The proof is based on the following simple lemma:

\begin{lemma}\label{degree0}Let $f\co G\to L$ be a homomorphism from a Lannes group to a profinite group such that
the map induced on cohomology $f^\ast\co H^i(L;\F_p)\to H^i(G;\F_p)$ is surjective (resp.\ injective) for $i\gg 0$, then,
for every elementary abelian $p$-group $V$, the induced map $T_V^0 H^\bt(L)\to T_V^0 H^\bt(G)$ is surjective (resp.\ injective).
\end{lemma}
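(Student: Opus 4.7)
The plan is to reduce the degree-zero statement to one about high-degree kernels and cokernels of $f^\ast$, exploiting two facts recalled in the introduction: $T_V$ is exact on $\cU$, and $T_V M=M$ whenever $M$ is the union of its finite unstable submodules. The point is that both the kernel and cokernel of $f^\ast$ become bounded above under the respective hypotheses, and bounded-above unstable modules are fixed by $T_V$.

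Concretely, I would first break $f^\ast$ into two short exact sequences in $\cU$,
\[0\to K\to H^\bullet(L)\to\Im f^\ast\to 0,\qquad 0\to\Im f^\ast\to H^\bullet(G)\to C\to 0,\]
with $K=\ker f^\ast$ and $C=\coker f^\ast$. The surjectivity hypothesis gives $C^i=0$ for $i\gg 0$, and the injectivity hypothesis gives $K^i=0$ for $i\gg 0$; either way, the relevant module is bounded above. Since the mod-$p$ Steenrod algebra $\cA$ is finite-dimensional in each degree, the $\cA$-submodule generated by any element of a bounded-above unstable module $M$ is supported in only finitely many degrees and each of its graded pieces is finite over $\F_p$, so every finitely generated $\cA$-submodule of $M$ is a finite unstable submodule. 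Hence $M$ is the filtered union of its finite unstable submodules, and consequently $T_V C=C$ in the surjective case and $T_V K=K$ in the injective case.

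Next, I would apply the exact functor $T_V$ to the short exact sequences and extract the degree-$0$ component. In the surjective case this produces the exact sequence
\[T_V H^0(L)\to T_V H^0(G)\to C^0\to 0,\]
using $(T_V C)^0=C^0$. But $H^0(L)=H^0(G)=\F_p$ and $f^{\ast,0}=\id$, so $C^0=0$ and the map $T_V H^0(L)\to T_V H^0(G)$ is surjective. Dually, in the injective case one obtains
\[0\to K^0\to T_V H^0(L)\to T_V H^0(G),\]
with $K^0=\ker(\id_{\F_p})=0$, yielding the desired injectivity.

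There is essentially no obstacle beyond bookkeeping: once exactness of $T_V$ and its triviality on unions of finite unstable submodules are in hand, the only real content is the observation that a bounded-above unstable module is such a union, which is immediate from finite-dimensionality of $\cA$ in each degree. This simplicity is consistent with the lemma being an auxiliary tool, the substantive work being its combination with Theorem~\ref{Lannesprofinite} in the proof of Theorem~\ref{separability}.
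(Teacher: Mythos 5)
Your proof is correct and follows essentially the same route as the paper: apply the exact functor $T_V$ to the (co)kernel sequence of $f^\ast$, observe that $T_V$ fixes the high-degree-vanishing kernel/cokernel, and read off degree $0$. The only difference is that you justify $T_VC=C$ (resp.\ $T_VK=K$) by noting that a bounded-above unstable module is the union of its finite unstable submodules, whereas the paper simply asserts the cokernel is a finite algebra; your version is if anything the more careful one, since a Lannes group need not have degreewise finite cohomology.
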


\begin{proof}The hypotheses of the lemma imply that the cokernel (resp.\ kernel) of the map of unstable modules $f^\ast\co H^\bt(L;\F_p)\to H^\bt(G;\F_p)$ 
is a nilpotent unstable module $N$. By \cite[Lemma~3.10.1]{Schwartz}, we then have $T_V^0 N=\{0\}$ and the conclusion follows from the
exactness of the functor $T_V^0$. 
\end{proof}

\begin{proof}[Proof of Theorem~\ref{separability}](i): By Corollary~\ref{degree0iso} and~\eqref{T^0}, there are natural isomorphisms
of $\F_p$-vector spaces $T^0_V H^\bt(L)\cong\cC(\Rep(V,L),\F_p)$ and $T^0_V H^\bt(G)\cong\map(\Rep(V,G),\F_p)$.

By hypothesis and Lemma~\ref{degree0}, the map $f_V^\ast\co\cC(\Rep(V,L),\F_p)\to\map(\Rep(V,G),\F_p)$, induced by the
map $f_V\co\Rep(V,G)\to\Rep(V,L)$, is then surjective. 

If the map $f_V$ is not injective, it is easy to construct a map $\Rep(V,G)\to\F_p$ not in the image of the map $f_V^\ast$. 
Therefore, $f_V$ is injective. By the first item of Lemma~\ref{Rep}, this implies that the functor $f_p\co\cA_p(G)\to\cA_p(L)$ is fully faithful. 
\smallskip

\noindent
(ii): By hypothesis and Lemma~\ref{degree0}, the map $f_V^\ast\co\cC(\Rep(V,L),\F_p)\to\map(\Rep(V,G),\F_p)$ is injective. 
The profinite set $\Rep(V,L)$ is the inverse limit of the finite sets $\Rep(V,L')$, for $L'$ a finite quotient of $L$. Therefore,
$f_V\co\Rep(V,G)\to\Rep(V,L)$ has dense image if and only if for every finite quotient map $\pi\co L\to L'$, with associated map 
$\pi_V\co\Rep(V,L)\to\Rep(V,L')$, the composition $\pi_V\circ f_V\co\Rep(V,G)\to\Im\pi_{V}$ is surjective.
 
If $f_V\co\Rep(V,G)\to\Rep(V,L)$ has not dense image, there is then a finite quotient map $\pi\co L\to L'$ such that the composition 
$\pi_V\circ f_V\co\Rep(V,G)\to\Im\pi_{V}$ is not surjective. 
This implies that the induced map $f_V^\ast\circ\pi_V^\ast\co\map(\Im\pi_V,\F_p)\to\map(\Rep(V,G),\F_p)$ has a nontrivial kernel
and then, since the finite discrete $\F_p$-vector space $\map(\Im\pi_V,\F_p)$ identifies with a subspace of the discrete $\F_p$-vector space 
$\cC(\Rep(V,L),\F_p)$, it implies that the map $f_V^\ast\co\cC(\Rep(V,L),\F_p)\to\map(\Rep(V,G),\F_p)$ is not injective, in contradiction
with the hypothesis. Therefore, $f_V$ has dense image and the conclusion of the theorem follows from the second item of Lemma~\ref{Rep}.
\smallskip

\noindent
(iii): Since the only dense subset of a discrete set is the all set, our hypotheses and the second item of Theorem~\ref{separability} imply 
that the induced functor $f_p/G\co\cA_p(G)/G\to\cA_p(L)/G$ is surjective on objects. In turn, this implies that the functor $f_p\co\cA_p(G)\to\cA_p(L)$ 
is essentially surjective. By hypothesis and the first item of Theorem~\ref{separability}, $f_p$ is also fully faithful. 
Therefore, the conclusion follows.
\end{proof}

Theorem~\ref{separability} implies the following conjugacy separability criterion for Lannes groups which improves \cite[Theorem~1.5]{MZ}:

\begin{corollary}\label{separability2}Let $G$ be a Lannes group such that the natural homomorphism $\iota\co G\to\wh{G}$ to its profinite completion 
induces a surjective map $\iota^\ast\co H^i(\wh{G};\F_p)\to H^i(G;\F_p)$ for $i\gg 0$. Then, $p$-elements are conjugacy distinguished and elementary 
abelian $p$-subgroups of $G$ are subgroup conjugacy distinguished.
\end{corollary}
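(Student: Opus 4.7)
The plan is to apply Theorem~\ref{separability}(i) to the natural homomorphism $\iota\co G\to\wh{G}$ into the profinite completion. Since the cohomological hypothesis of the corollary is precisely the surjectivity assumption of that item, the induced functor $\iota_p\co\cA_p(G)\to\cA_p(\wh{G})$ will be essentially injective and fully faithful, and both conclusions of the corollary will follow by unpacking these two properties.

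For subgroup conjugacy distinguishedness of elementary abelian $p$-subgroups, I would first use essential injectivity of $\iota_p$ to conclude that two such subgroups $A_1,A_2$ of $G$ are $G$-conjugate if and only if $\iota(A_1)$ and $\iota(A_2)$ are $\wh{G}$-conjugate. Next, I would observe that the $\wh{G}$-orbit $\iota(A)^{\wh{G}}$ is the continuous image of the compact coset space $\wh{G}/N_{\wh{G}}(\iota(A))$ in the profinite space of closed subgroups of $\wh{G}$, and hence closed there. Intersecting this closed orbit with $\cS_f(G)$ via $\iota$ then exhibits the $G$-conjugacy class of $A$ as a closed subset of $\cS_f(G)$, which is the desired assertion.

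For conjugacy distinguishedness of elements $x,y\in G$ of order $p$ with $\iota(y)=h\iota(x)h^{-1}$ for some $h\in\wh{G}$, the idea is first to reduce to the case $\langle x\rangle=\langle y\rangle$ and then to find the desired conjugator in $G$. More precisely, since $\iota(\langle x\rangle)$ and $\iota(\langle y\rangle)$ are $\wh{G}$-conjugate, essential injectivity supplies $g_0\in G$ with $g_0\langle y\rangle g_0^{-1}=\langle x\rangle$, so after replacing $y$ by $g_0yg_0^{-1}$ we may assume $\langle x\rangle=\langle y\rangle$, hence $y=x^k$ for some $k\in(\Z/p)^\ast$. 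The element $h$ then normalizes $\iota(\langle x\rangle)$ and induces the automorphism $x\mapsto x^k$ of $\langle x\rangle$. By the fully faithful part of Theorem~\ref{separability}(i), the image of $N_{\wh{G}}(\iota(\langle x\rangle))$ in $\aut(\iota(\langle x\rangle))$ coincides with the image of $N_G(\langle x\rangle)$ in $\aut(\langle x\rangle)$, so some $g\in N_G(\langle x\rangle)$ will realize the same automorphism and satisfy $gxg^{-1}=y$.

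The main point requiring care is the translation between essential injectivity of $\iota_p$ and closedness of conjugacy classes of finite subgroups in $\cS_f(G)$, which rests on compactness of orbits of closed subgroups under the continuous conjugation action of $\wh{G}$; every other step is a direct application of the two properties furnished by Theorem~\ref{separability}(i).
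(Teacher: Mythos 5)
Your proposal matches the paper's (essentially unwritten) proof: Corollary~\ref{separability2} is obtained by applying Theorem~\ref{separability}(i) to $\iota\co G\to\wh{G}$, and your unpacking of essential injectivity and full faithfulness into the two separability statements — including the compactness argument showing that $\iota(A)^{\wh{G}}$ is closed in the space of closed subgroups of $\wh{G}$ — is correct and is exactly what the paper leaves to the reader. The one caveat worth noting is that your argument (like any direct deduction from Theorem~\ref{separability}(i), which only controls \emph{elementary abelian} subgroups) treats elements of order $p$; if ``$p$-elements'' is read as all elements of $p$-power order, one additionally needs an induction on the order passing to centralizers of central elementary abelian subgroups, which is not contained in Theorem~\ref{separability}(i) alone.
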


With more restrictive hypotheses, we have the following analogue of Symonds' theorem \cite[Theorem~1.1]{Symonds} (see
also \cite[Theorem~A]{BZ}, for a similar result):

\begin{theorem}\label{separability3}Let $G$ be a Lannes group such that the cohomology ring $H^\bt(G;\F_p)$ is a finitely generated $\F_p$-algebra and 
let $f\co G\to L$ be a homomorphism to a profinite group such that the induced map $f^\ast\co H^i(L;\F_p)\to H^i(G;\F_p)$ is an isomorphism for $i\gg 0$. 
Then, the induced functor $f_p\co\cS_p(G)\to\cS_p(L)$ is an equivalence of categories.
\end{theorem}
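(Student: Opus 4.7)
The plan is to first reduce to Theorem~\ref{separability}(iii) for elementary abelian $p$-subgroups, and then bootstrap from $\cA_p$ to $\cS_p$ by induction on order, using central elementary abelian subgroups to descend. The hypothesis that $H^\bt(G;\F_p)$ is finitely generated implies, by a Quillen--Henn type argument (cf.\ Remark~\ref{Lannesgeneral}(iv) and \cite{H2}), that $G$ has only finitely many conjugacy classes of elementary abelian $p$-subgroups of each order. Combined with the assumption that $f^\ast$ is an isomorphism in high degrees, Theorem~\ref{separability}(iii) yields that the restriction $f_p\co\cA_p(G)\to\cA_p(L)$ is already an equivalence of categories.

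To extend this to $\cS_p$, I would induct on the order of the finite $p$-subgroups involved. For essential surjectivity, let $Q\leq L$ be a finite $p$-subgroup with $|Q|>1$ and set $W:=\Omega_1(Z(Q))$, a nontrivial characteristic elementary abelian central subgroup of $Q$; then $Q\leq C_L(W)$ and $Q/W\leq C_L(W)/W$ has strictly smaller order. By the elementary abelian case there is an elementary abelian $V\leq G$ with $f(V)$ conjugate to $W$, and after conjugation $f(V)=W$, so $f$ induces $\bar f\co C_G(V)/V\to C_L(W)/W$. Applying the theorem inductively to $\bar f$ lifts $Q/W$ to a finite $p$-subgroup $\bar P$ of $C_G(V)/V$; pulling $\bar P$ back along the quotient $C_G(V)\to C_G(V)/V$ yields a finite $p$-subgroup $P\leq G$ with $f(P)$ conjugate to $Q$ in $L$. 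Full faithfulness is handled analogously, by reducing conjugations between pairs of finite $p$-subgroups of $G$ to conjugations in appropriate centralizer quotients.

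For this recursion to close, the hypotheses of the theorem must descend to $\bar f$. The cohomological isomorphism in high degrees follows from Theorem~\ref{Lannesprofinite}: the $[\mathrm{id}]$-component of $T_V H^\bt(\_)$ recovers $H^\bt(C_{(\_)}(V))$, and since Lannes' $T$-functor is exact and trivial on finite algebras, the eventual isomorphism $H^\bt(L)\cong H^\bt(G)$ propagates to centralizers of elementary abelian $p$-subgroups; it then transfers to $C_G(V)/V$ via the Lyndon--Hochschild--Serre spectral sequence of the central extension $1\to V\to C_G(V)\to C_G(V)/V\to 1$. The main obstacle will be to verify that $C_G(V)/V$ remains a Lannes group with finitely generated mod-$p$ cohomology---the finiteness requiring a Duflot/Quillen--Venkov style argument to pass finite generation from $G$ to centralizers, and the Lannes property requiring that both centralizers and quotients by central finite $p$-groups preserve the class of Lannes groups. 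Should this descent prove delicate, a cleaner alternative is to invoke an iterated Lannes $T$-functor along a central series $\{1\}=V_0\leq V_1\leq\cdots\leq V_n=P$ of a finite $p$-group $P$, combining Corollary~\ref{degree0iso} at each stage to produce the bijection $\Rep(P,G)\cong\Rep(P,L)$ in a single step.
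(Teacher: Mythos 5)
Your first step --- deducing that $\Rep(V,G)$ is finite for every elementary abelian $V$ from the finite generation of $H^\bt(G;\F_p)$, and then invoking Theorem~\ref{separability}(iii) to get that $f_p\co\cA_p(G)\to\cA_p(L)$ is an equivalence --- is exactly what the paper does (the paper gets the finiteness by noting that Theorem~\ref{Lannesfinite} embeds $\Rep(V,G)$ into the finite set $\hom_\cK(H^\bt(G),H^\bt(V))$; your ``Quillen--Henn type'' citation reaches the same conclusion). The divergence, and the gap, is in the bootstrap from $\cA_p$ to $\cS_p$. You set up a recursion in which the theorem is applied inductively to $\bar f\co C_G(V)/V\to C_L(W)/W$, which requires that $C_G(V)/V$ be a Lannes group with finitely generated mod-$p$ cohomology and that $\bar f^\ast$ be an isomorphism in high degrees. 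You correctly flag this as ``the main obstacle,'' but you do not resolve it, and it is genuinely problematic: the class of Lannes groups is not known to be closed under passing to centralizers and then to quotients by central finite subgroups, and your proposed Lyndon--Hochschild--Serre comparison runs in the wrong direction --- knowing that a map of central extensions induces isomorphisms on the cohomology of the total groups and of the (isomorphic) central subgroups does not let you read off an isomorphism on the cohomology of the quotients without a Zeeman-type comparison theorem whose hypotheses are not available here. Your fallback of iterating the $T$-functor along a central series also does not close the argument: $T_V$ is only defined for elementary abelian $V$, successive quotients of a central series do not determine $\Rep(P,G)$ for non-abelian $P$ (the extension data is lost), and in any case an equivalence $\cS_p(G)\simeq\cS_p(L)$ requires control of normalizer/automorphism data, not just bijections on conjugacy classes.

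The paper avoids all of this by following Symonds' reduction: once the elementary abelian case is settled, it suffices to show that for every $\rho\co V\to G$ the induced map $C_G(\rho(V))\to C_L(f(\rho(V)))$ is an isomorphism on mod-$p$ cohomology --- one stays with centralizers and never passes to quotients, so no descent of the Lannes or finiteness hypotheses is needed. This cohomological statement is exactly Lemma~\ref{inducediso} (exactness of $T_V$ applied to the four-term sequence with finite kernel and cokernel, combined with Lemma~\ref{nontrivial}), made usable by the fact that $\Rep(V,G)=\Rep(V,L)$ is finite, so the topological product degenerates to an ordinary product and one can project onto the factor indexed by $[\rho]$. Your proposal never invokes this lemma, which is the actual engine of the proof; without it, or a worked-out substitute for the descent of hypotheses to $C_G(V)/V$, the induction does not close.
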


Let $\Rep(V,G)^\ast$ be the set of \emph{nontrivial} homomorphisms from the elementary abelian $p$-group $V$ to $G$.
Note that, if $G$ is a profinite group, then $\Rep(V,G)^\ast$ is a clopen subset of $\Rep(V,G)$. In particular, it is a profinite space.
We will need the following lemma:

\begin{lemma}\label{inducediso}Let $f\co G\to L$ be a homomorphism from a Lannes group to a profinite group such that
the induced map $f^\ast\co H^i(L;\F_p)\to H^i(G;\F_p)$ is an isomorphism for $i\gg 0$. 
Then, $f$ induces an isomorphism of unstable modules:
\[\sideset{}{^{\mathrm{top}}}\prod_{[\rho]\in\Rep(V,L)^\ast}H^\bullet(L;\cC(\pi_L^{-1}([\rho]),\F_p))\sr{\sim}{\to}
\prod_{[\mu]\in\Rep(V,G)^\ast}H^\bullet(G;\cC(\pi_G^{-1}([\mu]),\F_p)),\]
where $\pi_L\co\hom(V,L)^\ast\to\Rep(V,L)^\ast$ and $\pi_G\co\hom(V,G)^\ast\to\Rep(V,G)^\ast$ are the natural orbit maps.
\end{lemma}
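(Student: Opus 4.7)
The plan is to apply the snake lemma to the natural short exact sequences implicit in Lemma~\ref{nontrivial}, exploiting exactness of Lannes' $T$-functor combined with its invariance on locally finite unstable modules. For each $H\in\{L,G\}$, I would use the short exact sequence
\[0\to P_H\to T_V H^\bullet(H)\to H^\bullet(H)\to 0,\]
where $P_H$ denotes the relevant product from the statement and the surjection is the projection to the trivial-homomorphism factor of Symonds' isomorphism $T_V H^\bullet(H)\cong H^\bullet(H;\cC(\hom(V,H),\F_p))$ (i.e.\ the map induced by evaluation at $1\co V\to H$). This projection is strictly natural in $H$ because $f\circ 1_G=1_L$, so $f$ induces a commutative diagram with exact rows, with $\alpha$ the induced map on the leftmost kernels.

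Next, I would show that $K:=\ker f^\ast$ and $C:=\coker f^\ast$ are locally finite unstable modules. By hypothesis there is an integer $N$ with $K_i=C_i=0$ for $i\geq N$; given any $x$ in $K$ or $C$, the instability relation forces $Sq^I x=0$ whenever $|Sq^I x|\geq N$, so only finitely many admissible Steenrod monomials act nontrivially on $x$, and the submodule it generates is finite. By the property of $T_V$ recalled in the introduction, $T_V K=K$ and $T_V C=C$. Applying the exact functor $T_V$ to $0\to K\to H^\bullet(L)\xrightarrow{f^\ast}H^\bullet(G)\to C\to 0$ then identifies $\ker T_V f^\ast$ with $K$ and $\coker T_V f^\ast$ with $C$. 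Moreover, the snake-lemma maps $\ker T_V f^\ast\to\ker f^\ast$ and $\coker T_V f^\ast\to\coker f^\ast$ become the identity under these identifications, because the natural transformation $T_V\to\id$ induced by evaluation at the trivial hom restricts to the canonical isomorphism $T_V M=M$ on any locally finite $M$.

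The snake lemma applied to the diagram above thus yields the exact sequence
\[0\to\ker\alpha\to K\xrightarrow{\id}K\to\coker\alpha\to C\xrightarrow{\id}C\to 0,\]
forcing $\ker\alpha=0$ and $\coker\alpha=0$. Hence $\alpha$ is an isomorphism of $\F_p$-vector spaces; its compatibility with the algebra structure is automatic, since $T_V f^\ast$ is a morphism of unstable algebras over the mod-$p$ Steenrod algebra (Remark~\ref{moreprecise}) and $\alpha$ is the map it induces on the kernels of the natural surjections $T_V H^\bullet(H)\twoheadrightarrow H^\bullet(H)$. The main obstacle is the claim used in the previous paragraph — that the natural transformation $T_V\to\id$ realizes the canonical iso on locally finite modules — which follows from the analogous fact on finite unstable modules (a direct consequence of the adjunction defining $T_V$) combined with the fact that $T_V$ commutes with filtered colimits.
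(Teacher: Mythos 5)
Your proof is correct and follows essentially the same route as the paper: both pass to the four-term exact sequence $0\to K^\bt\to H^\bt(L)\to H^\bt(G)\to C^\bt\to 0$, use exactness of $T_V$ together with $T_V=\id$ on the (locally) finite kernel and cokernel, and then deduce the claim from the splitting off of the trivial-homomorphism factor in Lemma~\ref{nontrivial}. You merely make explicit the snake-lemma step that the paper compresses into ``the conclusion then follows from Lemma~\ref{nontrivial}'', and your observation that boundedness above only gives \emph{local} finiteness of $K^\bt$ and $C^\bt$ (which still suffices for $T_V K^\bt=K^\bt$) is a welcome precision over the paper's assertion that they are finite.
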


\begin{proof}This is essentially the same proof as \cite[Corollary~3.6]{Symonds}. 
The hypothesis implies that there is an exact sequence of unstable modules:
\[0\to K^\bt\to H^\bt(L;\F_p)\to H^\bt(G;\F_p)\to C^\bt\to 0,\]
where both $K^\bt$ and $C^\bt$ are bounded and so locally finite. Applying the $T$-functor, we get an exact sequence of unstable modules
(cf.\ \cite[Proposition~3.3.6]{Schwartz}):
\[0\to K^\bt\to T_V H^\bt(L;\F_p)\to T_V H^\bt(G;\F_p)\to C^\bt\to 0.\]
The above two exact sequences then imply that $f$ induces an isomorphism after applying the reduced Lannes' $T$-functor 
$f^\ast\co\bT_V H^\bt(L;\F_p)\sr{\sim}{\to} \bT_V H^\bt(G;\F_p)$ and the
conclusion follows from Corollary~\ref{Lannesprofinitereduced} and the similar statement which holds for Lannes groups.
\end{proof}

\begin{proof}[Proof of Theorem~\ref{separability3}]The isomorphism~\eqref{Lannesiso} for Lannes groups implies, in particular, that the map 
from $\Rep(V,G)$ to $\hom_\cK(H^\bt(G),H^\bt(V))$, which sends a homomorphism $\rho\co V\to G$ to the induced map of unstable algebras 
$\rho^\ast\co H^\bt(G)\to H^\bt(V)$, is injective. From the hypothesis that $H^\bt(G)$ is finitely generated,
it follows that $\hom_\cK(H^\bt(G),H^\bt(V))$ and so $\Rep(V,G)$ is finite for every elementary abelian $p$-group $V$. 
This implies that all hypotheses of item (iii) in Theorem~\ref{separability} are satisfied. By (iii) of Theorem~\ref{separability}, 
we then conclude that the restriction $f_p|_{\cA_p(G)}\co\cA_p(G)\to\cA_p(L)$ is an equivalence of categories. In particular, 
$\Rep(V,L)=\Rep(V,G)$ (cf.\ Lemma~\ref{Rep}) is finite for every elementary abelian $p$-group $V$. 

We can now proceed by induction as in Symonds' proof of \cite[Theorem~1.1]{Symonds}. Hence,
by the same argument of Symonds (cf.\ \cite[Section~2]{Symonds}), it is enough to show that, if $f\co G\to L$ is a homomorphism 
which satisfies the hypotheses of the theorem, then, for any given homomorphism $\rho\co V\to G$ from an elementary abelian $p$-group, 
the induced homomorphism $f_\rho\co C_G(\rho(V))\to C_L(f(\rho(V)))$ induces an isomorphism on mod-$p$ cohomology.

As we remarked above, the hypotheses on $f$ imply that $\Rep(V,L)=\Rep(V,G)$, and so $\Rep(V,L)^\ast=\Rep(V,G)^\ast$, are finite discrete sets 
for every elementary abelian $p$-group $V$. Therefore, we have that $\prod^\mathrm{top}_{\Rep(V,L)^\ast}=\prod_{\Rep(V,L)^\ast}$ which, 
together with Lemma~\ref{inducediso}, implies the above claim on $f_\rho$.
\end{proof}


\begin{thebibliography}{PP}

\bibitem{BZ} M.~Boggi, P.~Zalesskii. \textsl{Finite subgroups of the profinite completion of good groups}. 
Bull. London Math. Soc. vol {\bf 57}, no. 1 (2025), 236--255.













\bibitem{DH}W.G. Dwyer, H.-W. Henn. \textsl{Homotopy Theoretic Methods in Group Cohomology.} Adv. Courses Math. CRM Barcelona, 2001.

\bibitem{GR}D. Gildenhuys, L. Ribes. \textsl{A Kurosh subgroup theorem for free pro-$\cC$ products of pro-$\cC$ groups.}
Trans. Am. Math. Soc. {\bf 186} (1973), 309--329.





\bibitem{Haran}D. Haran. \textsl{On closed subgroups of free products of groups.} Proc. London Math. Soc. (3) {\bf 55} (1987) 266--298.

\bibitem{H1}H.-W. Henn. \textsl{Commutative algebra of unstable $K$-modules, Lannes' $T$-functor and equivariant mod-$p$ cohomology.}
J. Reine Angew. Math. {\bf 478} (1996), 189--215.

\bibitem{H2}H.-W. Henn. \textsl{Centralizers of elementary abelian $p$-subgroups and mod-$p$ cohomology of profinite groups.} 
Duke Math. J. {\bf 91} (1998), 561--585.

\bibitem{Ho}C.-W. Ho \textsl{A note on proper maps.} Proc. Am. Math. Soc. {\bf 51}, no. 1 (1975), 237--241.


\bibitem{Lannes0}J. Lannes. \textsl{Cohomology of groups and function spaces.} Preprint (1986).

\bibitem{Lannes}J. Lannes. \textsl{Sur les espaces fonctionnels dont la source est le classifiant d'un  $p$-groupe ab\'elien \'el\'ementaire.} 
Publ. Math. Inst. Hautes \'Etudes Sci. {\bf 75} (1992), 135--244.


\bibitem{MacLane}S. Mac Lane. \textsl{Categories for the Working Mathematician. Second Edition.} GTM {\bf 5}, Springer Verlag (2000).

\bibitem{Mel}O.V. Mel'nikov. \textsl{Subgroups and the homology of free products of profinite groups.} Izv. Akad.
Nauk SSSR, Ser. Mat. {\bf 53} (1) (1989), 97--120 (Russian). Translation in: Math. USSR-Izv. {\bf 34} (1) (1990), 97--119.


\bibitem{MZ} A. Minasyan, P. Zalesskii. \textsl{Virtually compact special hyperbolic groups are
conjugacy separable}. Comment. Math. Helv. {\bf 91} (2016), 609--627.

\bibitem{Munster}B. van Munster. \textsl{The Hausdorff quotient.} \emph{Bachelor thesis} (2014), Mathematisch Instituut, Universiteit Leiden.
\url{https://math.leidenuniv.nl/scripties/BachVanMunster.pdf}.



\bibitem{Pierce}R.S. Pierce. \textsl{Modules over commutative regular rings.} Mem. Amer. Math. Soc. no. {\bf 70} (1967).

\bibitem{Quick1} G. Quick. \textsl{Profinite homotopy theory}.
Documenta Mathematica {\bf 13} (2008), 585--612. 



\bibitem{Quillen}D. Quillen. \textsl{The spectrum of an equivariant cohomology ring I.} Ann. of Math. (2) {\bf 94} (1971), 549--572; II, 573--602.

\bibitem{RZ}L. Ribes, P. Zalesskii. \textsl{Profinite groups}. Ergeb. Math. Grenzgeb. (3), vol. {\bf 40}, Springer-Verlag (2000).

\bibitem{R} L. Ribes. \textsl{Profinite graphs and groups.} Ergeb. Math. Grenzgeb. (3), vol. {\bf 66}, Springer-Verlag (2017).




\bibitem{Schwartz}L. Schwartz. \textsl{Unstable Modules over the Steenrod Algebra and Sullivan's Fixed Point Set Conjecture.} 
Chicago lectures in math. series. The University of Chicago Press. Chicago, IL (1994).

\bibitem{stacks-project} The Stacks Project authors. \textsl{The Stacks project.} \url{http://stacks.math.columbia.edu} (2024).

\bibitem{Strickland}N.P. Strickland. \textsl{The category of CGWH spaces.} \url{https://ncatlab.org/nlab/files/StricklandCGHWSpaces.pdf}.

\bibitem{Symonds}P. Symonds. \textsl{On cohomology isomorphisms of groups.}
Journal of Algebra {\bf 313} (2007), 802--810.

\bibitem{Tang} J.~Tang. \textsl{Profinite Direct Sums with Applications to Profinite Groups of Type $\Phi_R$}. 
\url{https://arxiv.org/abs/2504.05182} (2025).

\bibitem{Wedhorn} T. Wedhorn. \textsl{Manifolds, Sheaves and Cohomology.} Springer Studium Mathematik -- Master. Springer-Verlag (2010).

\bibitem{Wilkes2}G.~Wilkes. \textsl{Relative cohomology theory for profinite groups}. 
Journal of Pure and Applied Algebra, {\bf 223} (4) (2019), 1617--1688.

\bibitem{Wilkes}G.~Wilkes. \textsl{Pontryagin duality and sheaves of profinite modules}. \url{https://arxiv.org/abs/2408.13059} (2024).


\end{thebibliography}
\end{document}